\newtheorem{theorem}{Theorem}[section]
\newtheorem{lemma}[theorem]{Lemma}
\newtheorem{corollary}[theorem]{Corollary}
\theoremstyle{definition}
\newtheorem{definition}[theorem]{Definition}
\newtheorem{notation}[theorem]{Notation}
\newtheorem{examples}[theorem]{Examples}
\newtheorem{remark}[theorem]{Remark}
\theoremstyle{plain}
\newtheorem *{Main Theorem}{Main Theorem}
\newtheorem *{Corollary B}{Corollary B}
\newtheorem *{Corollary E}{Corollary E}
\newtheorem *{Theorem B}{Theorem B}
\newtheorem *{Projective Schur's Lemma}{Projective Schur's Lemma}
\newtheorem *{Graded Artin Wedderburn Theorem}{Graded Artin Wedderburn Theorem}
\newtheorem *{Higgs' Conjecture}{Higgs' Conjecture}
\newtheorem *{Theorem C}{Theorem C}
\newtheorem *{Theorem D}{Theorem D}
\newtheorem *{BSZ}{Generalized BSZ Theorem}
\newtheorem *{Theorem E}{Theorem E}
\newtheorem *{Theorem F}{Theorem F}
\newtheorem *{Theorem G}{Theorem G}
\newtheorem *{Theorem H}{Theorem H}
\newtheorem *{Corollary I}{Corollary I}
\newtheorem *{Theorem J}{Theorem J}
\newtheorem *{Corollary K}{Corollary K}
\newtheorem *{Theorem L}{Theorem L}
\newtheorem *{Question1}{Question 1}
\newtheorem *{Question2}{Question 2}
\newtheorem *{Problem 1}{Problem 1}
\newtheorem *{Problem 2}{Problem 2}
\newtheorem *{Problem 3}{Problem 3}
\newcommand{\Id}{\text {Id}}
\newcommand{\p}{\text{Pic}_k}
\newcommand{\e}{\text{eq}}
\newcommand{\au}{\text{Aut}_k(K)}
\newcommand{\Z}{{\mathbb Z}}
\newcommand{\CGK}{{\mathcal{C}_k{(G,K)}}}
\newcommand{\CPK}{{\mathcal{C}_k{(\phi,K)}}}
\newcommand{\CK}{{\mathcal{C}_k{(\phi)}}}
\newcommand{\EX}{{\text{Ext}_k{(\phi)}}}
\newcommand{\EXCS}{{\text{Ext}_k{(\phi,\mathcal{CS})}}}
\newcommand{\CO}{{\mathcal{C}^{\text{Out}}_k{(\phi)}}}
\newcommand{\COCS}{{\mathcal{C}^{\text{Out}}_k{(\phi,\mathcal{CS})}}}
\newcommand{\CS}{{\mathcal{CS}(K)}}
\newcommand{\CSP}{{\mathcal{CS}^{\phi}(K)}}
\newcommand{\BrK}{{\text{Br}(K)}}
\newcommand{\Brk}{{\text{Br}(k)}}
\newcommand{\Br}{{\text{Br}}}
\newcommand{\Aut}{{\text{Aut}}}
\newcommand{\Inn}{{\text{Inn}}}
\newcommand{\Out}{{\text{Out}}}
\newcommand{\Hom}{{\text{Hom}}}
\newcommand{\res}{{\text{res}}}
 \newcommand{\bigslant}[2]{{\raisebox{.1em}{$#1$}\left/\raisebox{-.1em}{$#2$}\right.}}
\newcommand{\veq}{\mathrel{\rotatebox{90}{$=$}}}
\numberwithin{equation}{section}
\tikzset{node distance=3cm, auto}
\begin{document}
\title
[Realization-obstruction of Clifford system extensions]
{Realization-obstruction exact sequences for Clifford system extensions}
\author{Yuval Ginosar}

\address{Department of Mathematics, University of Haifa, Haifa 3498838, Israel}
\email{ginosar@math.haifa.ac.il}

\begin{abstract}
For every action $\phi\in\text{Hom}(G,\au)$ of a group $G$ on a commutative ring $K$ we introduce two abelian monoids.
The monoid $\text{Cliff}_k(\phi)$ consists of equivalence classes of strongly $G$-graded algebras of type $\phi$
up to $G$-graded Clifford system extensions of $K$-central algebras.
The monoid $\CK$ consists of equivariance classes of homomorphisms of type $\phi$ from $G$ to the Picard groups of $K$-central algebras (generalized collective characters).
Furthermore, for every such $\phi$ there is an exact sequence of abelian monoids
$$0\to H^2(G,K^*_{\phi})\to\text{Cliff}_k(\phi)\to\CK\to H^3(G,K^*_{\phi}).$$
This sequence describes the obstruction to realizing a generalized collective character of type $\phi$, that is it determines if such a character is associated to some strongly $G$-graded $k$-algebra.
The rightmost homomorphism is often surjective, terminating the above sequence.
When $\phi$ is a Galois action, then the well-known restriction-obstruction sequence of Brauer groups is an image of an exact sequence of sub-monoids appearing in the above sequence.
\end{abstract}

\date{\today}

\maketitle

\bibliographystyle{abbrv}
\tikzset{node distance=3cm, auto}

\section{Introduction}
In their paper \cite{CG01}, A. M. Cegarra and A. R. Garz\'on define a Generalized Collective Character (GCC) to be a homomorphism from a group $G$ to the Picard group of an algebra $R$.
This indeed generalizes the notion of a collective character, where the image of $G$ lies in the outer automorphisms of $R$, naturally embedded in the Picard group of $R$.
A GCC $\Phi$ determines an action $\phi$ of $G$ on the center $\mathcal{Z}(R)$ of $R$. In particular, the central units $\mathcal{Z}(R)^*$ are endowed with a $G$-module structure.
We say that $\phi$ is the {\it associated action} of $\Phi$. Alternatively, we say that $\Phi$ is {\it of type} $\phi\in$Hom$(G,$Aut$(\mathcal{Z}(R)))$.

On the other hand, every strongly $G$-graded algebra $A$ with base algebra $R$ determines a GCC $\Phi=\Phi(A):G\to$Pic$(R)$, which we term the {\it associated GCC} of $A$.
We say that such a strongly $G$-graded algebra is {\it of type} $\phi\in$Hom$(G,$Aut$(\mathcal{Z}(R)))$ if its associated GCC is of this type.
Further, a GCC $\Phi$ gives rise to a well-defined class in the third cohomology of $G$ with coefficients in the $G$-module $\mathcal{Z}(R)^*$ determined by $\phi$.
This cohomology class $T(\Phi)$ is an obstruction to realizing $\Phi$ as the GCC associated to some strongly $G$-graded algebra with base algebra $R$.
Once the obstruction vanishes, then such $G$-graded algebras with associated GCC $\Phi$ are in one-to-one correspondence, up to equivalence of
Clifford system extensions (see Definition \ref{cliffeq}), with the second cohomology of $G$ over the same coefficients.
More precisely, $H^2(G,\mathcal{Z}(R)^*_{\phi})$ acts freely and transitively on the classes of
Clifford system extensions with associated GCC $\Phi:G\to$Pic$(R)$ of type $\phi\in$Hom$(G,$Aut$(\mathcal{Z}(R)))$.

The above setup hints that there is a connecting exact sequence behind. In a following paper \cite{CG03}, Cegarra and Garz\'on do construct such exact sequences in the
more general setup of $\Gamma$-groups in two cases,
both for commutative base rings $R$.
The first \cite[Theorem 5.7]{CG03} is for central graded algebras, i.e. with a trivial associated action, whereas the second \cite[Theorem 5.9]{CG03} admits an associated Galois action.

The objective of this paper is to view the above maps $A\mapsto \Phi(A)$ and $\Phi\mapsto T(\Phi)$ as certain algebraic morphisms, and then to tie them up in a family of exact sequences.

Here is a brief outline.
Let $K$ be a commutative ring and $G$ a group. For an action $\phi\in\text{Hom}(G,\au)$ of the group $G$ on $K$ fixing a subring $k\subseteq K$ elementwise,
we construct two abelian monoids as follows.
\begin{enumerate}
\item The monoid $\CK$ consists of equivariant classes of GCC's of type $\phi$ from $G$ to the Picard groups of $K$-central algebras
(of some bounded cardinality).
\item The monoid $\text{Cliff}_k(\phi)$ consists of equivalence classes of strongly $G$-graded algebras of type $\phi$
up to $G$-graded Clifford system extensions of $K$-central algebras (of bounded cardinality as above).
\end{enumerate}
The torsor role of $H^2(G,K^*_{\phi})$ is described by a natural embedding (see \eqref{Sigma})
$$\Sigma_{\phi}:H^2(G,K^*_{\phi})\hookrightarrow\text{Cliff}_k(\phi).$$ Under the homomorphism $\Sigma_{\phi}$,
the second cohomology group is identified with the $\phi$-type $G$-graded Clifford system extensions of $K$ itself.
More precisely, $\Sigma_{\phi}([\alpha])$ is the class $[K^{\alpha}_{\phi}G]$ of crossed products with respect to any representative $\alpha\in Z^2(G,K^*)$ of $[\alpha]$,
where $G$ acts on $K^*$ via $\phi$.
Two other well-defined homomorphisms of monoids are described. The first homomorphism (see \eqref{chiphi}), $${\chi_{\phi}}:\text{Cliff}_k(\phi)\to\CK$$ sends a
graded class of a strongly graded algebra of type $\phi$ to the equivariance class of its associated GCC.
The second homomorphism of monoids (see \eqref{T-phi}) $${T_{\phi}}:\CK\to H^3(G,K^*_{\phi})$$
sends an equivariance class of a GCC $\Phi$ of type $\phi$ to the obstruction cohomology class $T(\Phi)$.
The homomorphisms $\Sigma_{\phi},\chi_{\phi}$ and $T_{\phi}$ give rise to a realization-obstruction exact sequence of monoids of type $\phi$ as follows.
\begin{Main Theorem}
Let $K$ be a commutative ring and $G$ a group. Then for every $\phi\in\text{Hom}(G,\au)$ there is an exact sequence of abelian monoids of type $\phi$
\begin{equation}\label{introexact}
0\to H^2(G,K^*_{\phi})\stackrel{\Sigma_{\phi}}\to\text{Cliff}_k(\phi)\stackrel{\chi_{\phi}}\to\CK\stackrel{T_{\phi}}\to H^3(G,K^*_{\phi}).
\end{equation}
Furthermore, the graded class of any strongly $G$-graded $k$-algebra appears in a unique sequence \eqref{introexact} (given a cardinality $\kappa$ that bounds its base algebra).

If, additionally, $K$ is an integral domain, then the homomorphism ${T_{\phi}}$ is surjective, terminating the sequence \eqref{introexact} as follows
$$
0\to H^2(G,K^*_{\phi})\stackrel{\Sigma_{\phi}}\to\text{Cliff}_k(\phi)\stackrel{\chi_{\phi}}\to\CK\stackrel{T_{\phi}}\to H^3(G,K^*_{\phi})\to 0.
$$
\end{Main Theorem}

For every $\phi\in\text{Hom}(G,\au)$ there is a sequence \eqref{subexact} of sub-monoids of the exact sequence \eqref{introexact}
describing the strongly graded $k$-algebras of type $\phi$ whose base algebra is $K$ itself.
In particular, the sequence for central graded algebras \cite[Theorem 5.7]{CG03} (with a trivial $\Gamma$-structure) is a special case of \eqref{subexact}.
Another important instance, namely when $K$ is a field and the action $\phi$ is Galois, is in Theorem \ref{commute}.
This theorem presents the restriction-obstruction exact sequence of Brauer groups as an image of a sequence of sub-monoids of \eqref{introexact} (compare with \cite[(45)]{CG03}).

The monoidal structures of $\text{Cliff}_k(\phi)$ is determined by {\it graded products} \eqref{Gprod} between $G$-graded $k$-algebras of type $\phi$.
These products give, in particular, a natural way to twist any $G$-graded $k$-algebra by cocycles (see \eqref{twist}).

The paper is organized as follows. In \S\ref{eqsec} we show how to multiply invertible bimodules over $K$-central algebras of the same $K$-automorphism type $\eta$
such that the product is again an invertible module of type $\eta$.
This multiplication leads to a product of GCC's of type $\phi\in\text{Hom}(G,\au)$, and so to the definition of the monoid $\CK$ as described in \S\ref{actongral}.
In \S\ref{Cse} we construct the graded product of strongly graded algebras of the same type,
and establish the monoid $\text{Cliff}_k(\phi)$, as well as the homomorphisms $\Sigma_{\phi}$ and $\chi_{\phi}$.
In \S\ref{obsection} the obstruction map ${T_{\phi}}$ is shown to be a homomorphism, which is surjective when $K$ is an integral domain.
Our main theorem and some of its particular instances are obtained in \S\ref{ros}. Section \S\ref{Csa} is devoted to Galois actions $\phi$, showing that the restriction-obstruction sequence in
the Brauer Theory is an image of one of these exact sequences.

This paper consists of fairly many notations; we tried to stick to those of \cite{CG01} as much as possible.
There are also fairly many equivalence relations.
The claims that assert independence of the choice of representatives under some of these identifications are left as exercises.

The reader is referred to an extensive list of related literature in \cite{CG03,VO00}.
\section{$K$-central algebras; equivariance of invertible modules}\label{eqsec}
An (associative unital) ring is called a {\it $K$-central algebra} if its center is isomorphic to a given commutative ring $K$.
Let $\mathcal{C}(K)=\mathcal{C}_{\kappa}(K)$ be the set of all $K$-isomorphism classes of $K$-central algebras whose cardinality is bounded by some infinite cardinality $\kappa$.
Denoting by $[R]$ the $K$-isomorphism class of a $K$-central algebra $R$,
it is not hard to verify that $[R_1\otimes_{K}R_2]\in\mathcal{C}(K)$ for every $[R_1],[R_2]\in\mathcal{C}(K)$. Moreover,
$$\begin{array}{ccc}
\mathcal{C}(K)\times\mathcal{C}(K)&\to&\mathcal{C}(K)\\
([R_1],[R_2])&\mapsto&[R_1\otimes_{K}R_2]
\end{array}$$
determines a well-defined operation that furnishes $\mathcal{C}(K)$ with an abelian monoid structure, whose two-sided identity is $[K]$ itself.
\begin{notation}\label{notbimod}
The left and right actions of $R$ on an $R$-bimodule are denoted throughout by $``\star"$ and $``\cdot"$ respectively.
\end{notation}
For any $[R]\in\mathcal{C}(K)$ and any subring $k\subset K$ let $\p(R)$ be the Picard group of isomorphism classes of invertible $R$-bimodules
whose left and right $k$-module structures are the same.
Any $[P]\in\p(R)$ determines a well-defined automorphism $\eta_{P}$ of $K$ via the commuting rule
$$\eta_{P}(x)\star p=p\cdot x,\ \forall x\in K,\ p\in P.$$
Furthermore, there is an exact sequence of groups (see \cite[Prop. II.5.4]{Bass})
\begin{equation}\label{bassequence}
0\to\text{Pic}_K(R)\to\p(R)\xrightarrow{h_R}\au,
\end{equation}
where $\text{Pic}_K(R)$ is naturally embedded in $\p(R)$ and where
\begin{eqnarray}\label{hR}
h_R:\begin{array}{ccc}
\p(R)&\to&\au\\
 ~[P]&\mapsto &\eta_{P}
\end{array}.\end{eqnarray}
In the sequel we make use of the following natural embedding of the outer automorphisms $\text{Out}_k(R):=\text{Aut}_k(R)/\text{Inn}(R)$ inside the Picard group.
For every $\eta\in\text{Aut}_k(R)$ let $R_{1,\eta}$ be an $R$-bimodule, which is free of rank 1 as a left module over $R$, whereas its right $R$-module structure is via $\eta$.
More explicitly, writing the multiplication in $R$ by juxtaposing elements while using the notations $``\star"$ and $``\cdot"$ as above for the left and right actions of $R$ on $R_{1,\eta}$ respectively,
then the $R$-bimodule structure of $R_{1,\eta}$ is given by
\begin{equation}\label{star}r\star s:=r s,\ \ s\cdot r:=s\eta(r),\ \ r\in R,\ \  s\in R_{1,\eta}.\end{equation}
By \cite[Prop. II.5.3]{Bass} there is a well-defined injective morphism
\begin{eqnarray}\label{out}\iota_R:\begin{array}{ccc}\text{Out}_k(R)&\hookrightarrow&\p(R),\\
\eta\cdot\text{Inn}(R)&\mapsto&[R_{1,\eta}]
\end{array}.\end{eqnarray}
We thus consider Out$_k(R)$ as a subgroup of $\p(R)$.

Any $R$-automorphism stabilizes the center, and hence determines an automorphism of $K$ as described using \eqref{hR} and \eqref{out}
\begin{eqnarray}\label{lift}\begin{array}{ccc}
\text{Aut}_k(R) &\to &\text{Aut}_k(K)\\
\eta & \mapsto & h_R([R_{1,\eta}])
\end{array}.\end{eqnarray}
\begin{definition}\label{extendable}
A $K$-automorphism is said to be \textit{extendable} to a $K$-central algebra $R$ if it lies in the image of the morphism \eqref{lift}.
\end{definition}

Let $[P_1]\in\p(R_1)$ and $[P_2]\in\p(R_2)$, where $[R_1],[R_2]\in\mathcal{C}(K)$. Suppose that they are of the same \textit{$K$-automorphism type}, that is
\begin{equation}\label{compat}
\eta:=h_{R_1}[P_1]=h_{R_2}[P_2]\in\au.
\end{equation}
Let
\begin{equation}\label{otimeseta}P_1\otimes_{\eta}P_2:= \bigslant{P_1\otimes_{k}P_2}{W_{\eta}},
\end{equation}
where the $k$-subspace $W_{\eta}\subseteq P_1\otimes_{k}P_2$ is spanned by the elements
$$
\ p_1\cdot x\otimes_kp_2-p_1\otimes_k\eta(x)\star p_2,\ \  \forall p_1\otimes_kp_2\in P_1\otimes_{k}P_2,\ \forall x\in K.$$
The elements in $P_1\otimes_{\eta}P_2$ are denoted by
$\sum_ip^i_1\otimes_{\eta}p^i_2.$

Next, assuming condition \eqref{compat} we define left and right actions of $R_1\otimes_{K}R_2$ on $P_1\otimes_{\eta}P_2$ as follows.
For every $r_1\in R_1,r_2\in R_2,p_1\in P_1,p_2\in P_2$ let
\begin{equation}\label{actwosidel}
(r_1\otimes_Kr_2)\star(p_1\otimes_{\eta}p_2):=r_1\star p_1\otimes_{\eta} r_2\star p_2,
\end{equation}
\begin{equation}\label{actwosider}
(p_1\otimes_{\eta}p_2)\cdot(r_1\otimes_Kr_2):=p_1\cdot r_1\otimes_{\eta} p_2\cdot r_2.
\end{equation}
Then the following claim can easily be verified.
\begin{lemma}\label{struct}
Let $[P_1]\in\p(R_1)$ and $[P_2]\in\p(R_2)$ satisfy \eqref{compat}. Then
Equations \eqref{actwosidel} and \eqref{actwosider} determine an $R_1\otimes_{K}R_2$-bimodule structure on $P_1\otimes_{\eta}P_2$ such that
\begin{enumerate}
\item $[P_1\otimes_{\eta}P_2]\in\p(R_1\otimes_{K}R_2)$.
\item If $[P_1]=[P_1']$ and $[P_2]=[P_2']$ then $[P_1\otimes_{\eta}P_2]=[P'_1\otimes_{\eta}P'_2]$.
\item $h_{R_1\otimes_{K}R_2}[P_1\otimes_{\eta}P_2]=\eta$.
\item If, additionally, $[P_1]\in$Out$_k(R_1)$ and $[P_2]\in$Out$_k(R_2)$ then\\ $[P_1\otimes_{\eta}P_2]\in$Out$_k(R_1\otimes_{K}R_2)$.
\end{enumerate}
\end{lemma}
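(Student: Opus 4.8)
The plan is to dispatch the preliminary bimodule assertion first and then the four items in turn. For the bimodule structure I would check that the $k$-subspace $W_{\eta}$ is stable under the operations \eqref{actwosidel} and \eqref{actwosider}: applying $(r_1\otimes_K r_2)\star-$ to a generator $p_1\cdot x\otimes_k p_2-p_1\otimes_k\eta(x)\star p_2$ of $W_{\eta}$ and commuting the central scalars $x\in\mathcal{Z}(R_1)$ and $\eta(x)\in\mathcal{Z}(R_2)$ past $r_1$ and $r_2$ returns an element of $W_{\eta}$, and symmetrically for the right action. One also has to check that these actions descend from $R_1\otimes_k R_2$ to $R_1\otimes_K R_2$, i.e.\ that $r_1x\otimes_k r_2$ and $r_1\otimes_k xr_2$ act identically for $x\in K$; this reduces to the identity $x\star q_1\otimes_{\eta}q_2=q_1\otimes_{\eta}x\star q_2$ in $P_1\otimes_{\eta}P_2$, which follows from the defining relation of $W_{\eta}$ after the substitution $x\mapsto\eta^{-1}(x)$, together with \eqref{compat}. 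Item (2) is then immediate, since isomorphisms $P_i\xrightarrow{\sim}P_i'$ of $R_i$-bimodules are in particular bilinear over $K$ for both module structures, hence carry $W_{\eta}$ onto $W_{\eta}$ (the automorphism $\eta$ being unchanged because $h_{R_i}$ is constant on isomorphism classes) and descend to an isomorphism $P_1\otimes_{\eta}P_2\xrightarrow{\sim}P_1'\otimes_{\eta}P_2'$.

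Item (3) I would prove by a direct appeal to the commuting rule defining $h_{R_1\otimes_K R_2}$. For $x\in K$, identifying $x$ and $\eta(x)$ with the central elements $x\otimes_K 1$ and $\eta(x)\otimes_K 1$ of $R_1\otimes_K R_2$, one computes $(p_1\otimes_{\eta}p_2)\cdot x=p_1\cdot x\otimes_{\eta}p_2$ from \eqref{actwosider}, while $p_1\cdot x=\eta(x)\star p_1$ by $\eta_{P_1}=\eta$, so $p_1\cdot x\otimes_{\eta}p_2=\eta(x)\star p_1\otimes_{\eta}p_2=(\eta(x)\otimes_K 1)\star(p_1\otimes_{\eta}p_2)$ by \eqref{actwosidel}. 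Hence $\eta_{P_1\otimes_{\eta}P_2}=\eta$, which is (3); in particular $P_1\otimes_{\eta}P_2$ is balanced, so it is legitimate to ask whether it defines a class in $\p(R_1\otimes_K R_2)$.

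For item (1) I would produce an explicit inverse. Writing $[P_i^{-1}]$ for the inverse of $[P_i]$ in $\p(R_i)$, the homomorphism $h_{R_i}$ gives $h_{R_i}[P_i^{-1}]=\eta^{-1}$, so $P_1^{-1}\otimes_{\eta^{-1}}P_2^{-1}$ is defined and, by the bimodule assertion applied to $\eta^{-1}$, is an $R_1\otimes_K R_2$-bimodule. The claim is that there are natural $R_1\otimes_K R_2$-bimodule isomorphisms
$$(P_1\otimes_{\eta}P_2)\otimes_{R_1\otimes_K R_2}(P_1^{-1}\otimes_{\eta^{-1}}P_2^{-1})\;\cong\;(P_1\otimes_{R_1}P_1^{-1})\otimes_K(P_2\otimes_{R_2}P_2^{-1})\;\cong\;R_1\otimes_K R_2,$$
together with their mirror images obtained by interchanging the two tensor factors, whence $[P_1\otimes_{\eta}P_2]$ is invertible. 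The second isomorphism is the statement that $[P_i^{-1}]$ inverts $[P_i]$, tensored over $K$. The first is the interchange map $(p_1\otimes_{\eta}p_2)\otimes(q_1\otimes_{\eta^{-1}}q_2)\mapsto(p_1\otimes_{R_1}q_1)\otimes_K(p_2\otimes_{R_2}q_2)$, whose inverse is the evident interchange map in the opposite direction; checking that both are well defined means matching the balancing relations over $W_{\eta}$, over $W_{\eta^{-1}}$, and over $R_1\otimes_K R_2$, which one does by repeatedly sliding central scalars across tensor signs with the help of $h_{R_i}[P_i]=\eta$ and $h_{R_i}[P_i^{-1}]=\eta^{-1}$. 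This interchange bookkeeping — in particular confirming that the $W_{\eta}$- and $W_{\eta^{-1}}$-relations are respected — is the only point requiring more than a one-line verification, and it is the step I expect to be the real (though still routine) obstacle.

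For item (4), suppose $[P_i]=\iota_{R_i}(\eta_i\cdot\Inn(R_i))$ with $\eta_i\in\Aut_k(R_i)$, so that $P_i$ is isomorphic to the bimodule $(R_i)_{1,\eta_i}$ of \eqref{star}. By \eqref{lift} the condition \eqref{compat} says precisely that $\eta_1$ and $\eta_2$ restrict to the same automorphism $\eta$ of the common center $K$. Therefore $\eta_1\otimes\eta_2\colon r_1\otimes_K r_2\mapsto\eta_1(r_1)\otimes_K\eta_2(r_2)$ is a well-defined element of $\Aut_k(R_1\otimes_K R_2)$ — well-definedness being exactly the agreement of $\eta_1,\eta_2$ on $K$ — and I would check that $r_1\otimes_{\eta}r_2\mapsto r_1\otimes_K r_2$ is an isomorphism of $R_1\otimes_K R_2$-bimodules $(R_1)_{1,\eta_1}\otimes_{\eta}(R_2)_{1,\eta_2}\xrightarrow{\sim}(R_1\otimes_K R_2)_{1,\eta_1\otimes\eta_2}$: it respects the $W_{\eta}$-relation because for $x\in K$ one has $r_1\cdot x=r_1\eta_1(x)=r_1\eta(x)$ and $\eta(x)\star r_2=\eta(x)r_2$, so both sides map to $r_1\eta(x)\otimes_K r_2=r_1\otimes_K\eta(x)r_2$; it is bijective because each side is $R_1\otimes_k R_2$ modulo the identification of right multiplication by $K$ on the first factor with left multiplication by $K$ on the second; and it intertwines the two bimodule structures by \eqref{star} and \eqref{actwosidel}--\eqref{actwosider}. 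Via \eqref{out} this identifies $[P_1\otimes_{\eta}P_2]$ with $\iota_{R_1\otimes_K R_2}\bigl((\eta_1\otimes\eta_2)\cdot\Inn(R_1\otimes_K R_2)\bigr)$, which lies in $\Out_k(R_1\otimes_K R_2)$, completing (4).
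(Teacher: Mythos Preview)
The paper does not actually prove this lemma: it states only that ``the following claim can easily be verified'' and moves on. Your proposal supplies exactly the routine verifications the paper omits, and each step is correct---in particular, the explicit-inverse construction $P_1^{-1}\otimes_{\eta^{-1}}P_2^{-1}$ for (1) and the identification $(R_1)_{1,\eta_1}\otimes_{\eta}(R_2)_{1,\eta_2}\cong(R_1\otimes_K R_2)_{1,\eta_1\otimes\eta_2}$ for (4) are the natural arguments one would expect.
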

\begin{definition}\label{defeq}
Let $\psi:R_1\to R_2$ be a $K$-algebra isomorphism of $K$-central algebras $R_1$ and $R_2$ (in particular $[R_1]=[R_2]\in\mathcal{C}(K))$.
Two classes $[P_1]\in\p(R_1),[P_2]\in\p(R_2)$ of invertible bimodules over $R_1$ and $R_2$ respectively
are called $\psi$-{\it equivariant} if there exists
an additive bijection $\varphi:P_1\to P_2$ such that with the notation \ref{notbimod}
$$\varphi(p\cdot r)=\varphi(p)\cdot\psi(r)\ \ \text{and} \ \ \varphi(r\star p)=\psi(r)\star\varphi(p), \ \ \forall r\in R_1,\ \  p\in P_1.$$
The classes $[P_1]\in\p(R_1),[P_2]\in\p(R_2)$ are {\it equivariant} if they are $\psi$-{equivariant} for some $K$-algebra isomorphism $\psi:R_1\to R_2$.
In particular, $[P_1],[P_2]\in\p(R)$ are equivariant if they are $\psi$-{equivariant} for some $\psi\in\Aut_K(R)$.
\end{definition}
\begin{notation}
We denote the equivariance class of $[P]\in\p(R)$ by $[P]_{\e}$.
Running over all $[R]\in\mathcal{C}(K)$ we write
\begin{equation}\label{P}\mathcal{P}_k(K):=\{[P]_{\e}|\ \ [P]\in\p(R),\ \  [R]\in\mathcal{C}(K)\}.
\end{equation}
This is the set of equivariance classes of all invertible modules over central $K$-algebras (of bounded cardinality as above).
The equivariance classes of modules over $K$ itself are denoted by
\begin{equation}\label{P1}\mathcal{P}_1(K):=\left\{[P]_{\e}\ \ |\ \ {[P]\in\p (K)}\right\}\subseteq\mathcal{P}_k(K).\end{equation}
\end{notation}
It is important to emphasize that all the members of a given equivariance class $[P]_{\e}$ belong to the Picard groups of isomorphic $K$-central algebras. In other words,
there is a well-defined map
\begin{equation}\label{eq21}
\mathcal{P}_k(K)\to\mathcal{C}(K),
\end{equation}
taking the equivariance class $[P]_{\e}$ of $[P]\in\p(R)$ to the isomorphism class $[R]$.

We also remark that the equivariance class of an outer automorphism, or rather its embedding \eqref{out} in the Picard group, consists solely of such outer automorphism embeddings.
That is, if $[P_1]\in\text{Out}_k(R_1)$ and $[P_2]\in\p(R_2)$, then
$$[P_1]_{\e}=[P_2]_{\e} \Rightarrow [P_2]\in\text{Out}_k(R_2).$$
The following straightforward claim says that equivariant classes agree on the homomorphisms \eqref{hR}.
\begin{lemma}\label{agree}
Suppose that the classes $[P_1]\in\p(R_1),[P_2]\in\p(R_2)$ are equivariant. Then $h_{R_1}[P_1]=h_{R_2}[P_2]$.
\end{lemma}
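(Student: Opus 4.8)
The plan is to unwind the definition of equivariance and then push the two defining commuting rules through the intertwining maps, using the fact that a $K$-algebra isomorphism of $K$-central algebras restricts to the identity on centers. First I would invoke Definition~\ref{defeq}: equivariance of $[P_1]\in\p(R_1)$ and $[P_2]\in\p(R_2)$ supplies a $K$-algebra isomorphism $\psi\colon R_1\to R_2$ together with an additive bijection $\varphi\colon P_1\to P_2$ satisfying
\[
\varphi(r\star p)=\psi(r)\star\varphi(p),\qquad \varphi(p\cdot r)=\varphi(p)\cdot\psi(r),\qquad \forall r\in R_1,\ p\in P_1.
\]
The structural observation to record next is that, since $R_1$ and $R_2$ are $K$-central and $\psi$ respects the $K$-algebra structures, $\psi$ restricts to the identity on the common central subring $K$.

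Write $\eta_i:=h_{R_i}[P_i]\in\au$, characterized by $\eta_i(x)\star p=p\cdot x$ for every $x\in K$ and $p\in P_i$. The heart of the argument is a short computation: for $x\in K$ and $p\in P_1$,
\[
\eta_1(x)\star\varphi(p)=\psi(\eta_1(x))\star\varphi(p)=\varphi(\eta_1(x)\star p)=\varphi(p\cdot x)=\varphi(p)\cdot\psi(x)=\varphi(p)\cdot x,
\]
where the first equality uses $\psi|_K=\mathrm{id}$ and the last uses it again. Comparing with the defining rule for $\eta_2$ applied to the element $\varphi(p)\in P_2$, namely $\eta_2(x)\star\varphi(p)=\varphi(p)\cdot x$, I would conclude that the central element $\eta_1(x)-\eta_2(x)$ of $R_2$ annihilates every element of $P_2$, since $\varphi$ is surjective.

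To close the proof I would use that the action of $R_2$ on the invertible bimodule $P_2$ is faithful: choosing $[Q_2]$ with $P_2\otimes_{R_2}Q_2\cong R_2$ shows that an element killing $P_2$ kills $R_2$ and is hence $0$; this is precisely the uniqueness that makes $h_{R_2}$ in \eqref{bassequence} well defined. Thus $\eta_1(x)=\eta_2(x)$ in $K$, and letting $x$ range over $K$ yields $\eta_1=\eta_2$, i.e.\ $h_{R_1}[P_1]=h_{R_2}[P_2]$. The only genuinely delicate point is the identity $\psi|_K=\mathrm{id}$; once that is secured the rest is a direct substitution, and the conclusion, being an equality of automorphisms of $K$, visibly does not depend on the auxiliary choices of $\psi$ and $\varphi$.
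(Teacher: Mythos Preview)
Your argument is correct: you unwind Definition~\ref{defeq}, use that a $K$-algebra isomorphism fixes the central copy of $K$, and then compare the defining relations for $\eta_1$ and $\eta_2$ via the intertwiners, invoking faithfulness of the left action on an invertible bimodule to conclude. The paper declares the lemma ``straightforward'' and gives no proof, so your write-up is exactly the routine verification the paper has in mind; the only point worth flagging, and you do flag it, is that $\psi|_K=\mathrm{id}$, which is what makes the computation go through.
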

By Lemma \ref{agree}, the homomorphisms \eqref{hR} give rise to a well-defined map
\begin{eqnarray}\label{h}
h:\begin{array}{ccc}\mathcal{P}_k(K)&\to &\au\\
~[P]_{\e}&\mapsto & h_R([P])\end{array},\ \ [P]\in\p(R),\ \  [R]\in\mathcal{C}(K).\end{eqnarray}
Let $\eta\in\au$, it is easy to check that the class $[K_{1,\eta}]\in \p (K)$ (see \eqref{out}) satisfies $h([K_{1,\eta}]_{\e})=\eta.$
The following claim is deduced from Lemma \ref{struct}.
\begin{lemma}\label{h-1}
Let $\eta\in \au$. Then there is a well-defined operation
\begin{eqnarray}\label{opeq}\begin{array}{ccc}
h^{-1}(\eta)\times h^{-1}(\eta)&\to&h^{-1}(\eta)\\
~[P_1]_{\e}\otimes_{\eta}[P_2]_{\e}&:=&[P_1\otimes_{\eta}P_2]_{\e}
\end{array},\end{eqnarray}
which turns $h^{-1}(\eta)(\subseteq \mathcal{P}_k(K))$ into an abelian monoid, whose identity is $[K_{1,\eta}]_{\e}$.
\end{lemma}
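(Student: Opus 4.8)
The plan is to deduce everything from Lemma \ref{struct} together with the definition of equivariance. First I would verify that the operation in \eqref{opeq} is well-defined. Given $[P_1]_{\e} = [P_1']_{\e}$ and $[P_2]_{\e} = [P_2']_{\e}$ with all four classes in $h^{-1}(\eta)$, by definition of equivariance there are $K$-algebra isomorphisms $\psi_i : R_i \to R_i'$ and additive bijections $\varphi_i : P_i \to P_i'$ intertwining the bimodule structures along $\psi_i$. I would then check that $\psi_1 \otimes_K \psi_2 : R_1 \otimes_K R_2 \to R_1' \otimes_K R_2'$ is a $K$-algebra isomorphism, and that $\varphi_1 \otimes_k \varphi_2$ descends to a well-defined additive bijection $P_1 \otimes_\eta P_2 \to P_1' \otimes_\eta P_2'$; the key point here is that $\varphi_1 \otimes_k \varphi_2$ carries the subspace $W_\eta$ into $W_\eta'$, which follows because the $\varphi_i$ respect the left and right $K$-actions (the $K$-action is recovered from the $R_i$-action, and $\psi_i$ fixes $K$) and because both quotients are taken with respect to the same $\eta$. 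Combined with Lemma \ref{struct}(2), which already handles the case $\psi_i = \Id$, this shows $[P_1 \otimes_\eta P_2]_{\e} = [P_1' \otimes_\eta P_2']_{\e}$. Lemma \ref{struct}(3) guarantees the product again lies in $h^{-1}(\eta)$, so the operation lands in the claimed target.

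Next I would establish the monoid axioms. Commutativity: the flip $P_1 \otimes_k P_2 \to P_2 \otimes_k P_1$ combined with the flip $R_1 \otimes_K R_2 \to R_2 \otimes_K R_1$ gives an equivariance between $[P_1 \otimes_\eta P_2]$ and $[P_2 \otimes_\eta P_1]$; one checks the flip sends $W_\eta$ (for the pair $P_1, P_2$) to $W_\eta$ (for $P_2, P_1$). Associativity: the canonical associativity isomorphism of $\otimes_k$ together with that of $\otimes_K$ yields, after passing to the appropriate quotients, an equivariance between $(P_1 \otimes_\eta P_2) \otimes_\eta P_3$ and $P_1 \otimes_\eta (P_2 \otimes_\eta P_3)$; here one must track that the iterated relation subspaces match up, again using that the same $\eta$ is used throughout. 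These are the "routine calculations'' the introduction flags as exercises, so I would state them and indicate the intertwining maps rather than expand the computations.

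Finally, the identity element. Since $h([K_{1,\eta}]_{\e}) = \eta$ (verified just before the statement), the class $[K_{1,\eta}]_{\e}$ lies in $h^{-1}(\eta)$. For any $[P]_{\e} \in h^{-1}(\eta)$ with $[P] \in \p(R)$, I would exhibit an explicit equivariance $K_{1,\eta} \otimes_\eta P \cong P$ over the $K$-algebra isomorphism $K \otimes_K R \cong R$: the map $x \otimes_\eta p \mapsto x \star p$ (using $\eta(x)\star p = p \cdot x$ in $P$ to see this respects $W_\eta$) is an $R$-bimodule isomorphism, and by commutativity $P \otimes_\eta K_{1,\eta} \cong P$ as well, so $[K_{1,\eta}]_{\e}$ is a two-sided identity. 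The main obstacle is bookkeeping: each verification requires confirming that a standard tensor-product natural map is compatible simultaneously with the quotient by $W_\eta$ and with the bimodule action formulas \eqref{actwosidel}--\eqref{actwosider}, and that the resulting map qualifies as an equivariance in the sense of Definition \ref{defeq}. There is no conceptual difficulty beyond Lemma \ref{struct}; the work is ensuring the $W_\eta$-relations and the $\eta$-twisted actions are preserved at every step.
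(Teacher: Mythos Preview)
Your proposal is correct and follows the same approach as the paper, which simply states that the lemma is deduced from Lemma~\ref{struct} and leaves the verifications as exercises. You have filled in precisely those routine checks (well-definedness on equivariance classes, commutativity and associativity via the flip and associator, and the identity via $K_{1,\eta}\otimes_\eta P\cong P$), all of which are straightforward consequences of Lemma~\ref{struct} and the definitions, just as the paper indicates.
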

Consequently, the map \eqref{h} partitions $\mathcal{P}_k(K)=\bigsqcup_{\eta\in\au}h^{-1}(\eta)$ as a disjoint union of abelian monoids of type $\eta$.

Note that if $[P_1],[P_2]\in\p (K)\cap  h^{-1}_K(\eta)$ then $$[P_1\otimes_{\eta}P_2]\in\p (K\otimes_KK)\cap h_{K\otimes_KK}^{-1}(\eta).$$
With the notation \eqref{P1} we obtain that
\begin{equation}\label{sub1}
\mathcal{P}_1(K)\cap h^{-1}(\eta)
\end{equation}
is a sub-monoid of $h^{-1}(\eta).$
\section{Generalized collective characters}\label{actongral}
Recall from the introduction that, with the terminology of \cite{CG01}, a group homomorphism from a group $G$ to $\p(R)$ is called a {\it Generalized Collective Character} (GCC).
\begin{definition}\label{equiGCC}
Let $[R_1]=[R_2]\in\mathcal{C}(K)$.
We say that two GCC's $\Phi_1\in$Hom$(G,\p(R_1))$ and $\Phi_2\in$Hom$(G,\p(R_2))$ are {\it equivariant} if there exists a $K$-algebra isomorphism $\psi:R_1\to R_2$ such that
$\Phi_1(g)$ and $\Phi_2(g)$ are $\psi$-equivariant (see Definition \ref{defeq}) for every $g\in G$.
\end{definition}
Denote the equivariance class of a GCC $\Phi\in$Hom$(G,\p(R))$ by $[\Phi]_{\e}$, and let
\begin{equation}\label{CGK}
\CGK:=\left\{[\Phi]_{\e}|\ \ \Phi\in\text{Hom}(G,\p(R)),\ \ [R]\in\mathcal{C}(K)\right\}.
\end{equation}

Notice again that, similarly to \eqref{eq21}, there is a well-defined map
\begin{equation}\label{eq22}
\CGK\to\mathcal{C}(K),
\end{equation}
taking the equivariance class $[\Phi]_{\e}$ of $\Phi\in$Hom$(G,\p(R))$ to the isomorphism class $[R]$ of the corresponding $K$-central algebra $R$.

Let us move to the well-defined functorial map associated to \eqref{h}
\begin{eqnarray}\label{h*}\begin{array}{rcl}
h^*:\CGK&\to& \text{Hom}(G,\au)\\
h^*([\Phi]_{\e}):g&\mapsto&h([\Phi(g)]_{\e})
\end{array},\ \ [\Phi]_{\e}\in\CGK, g\in G.\end{eqnarray}
An equivariance class $[\Phi]_{\e}\in\CGK$ is \textit{of type} $h^*([\Phi]_{\e})$.
For a group action $\phi\in\text{Hom}(G,\au)$ of $G$ on $K$ which fixes $k$, let 
\begin{equation}\label{CK}
\CK:=(h^*)^{-1}(\phi)=\left\{[\Phi]_{\e}\in\CGK |\ \ h([\Phi(g)]_{\e})=\phi(g), \forall g\in G\right\}
\end{equation}
be the set of all equivariance classes of type $\phi$.
We endow $\CK$ with an abelian monoid structure as follows.
Let $\Phi_1\in\text{Hom}(G,\p(R_1))$ and $\Phi_2\in\text{Hom}(G,\p(R_2))$ be two GCC's representing the equivariance classes $[\Phi_1]_{\e},[\Phi_2]_{\e}\in\CK$ respectively.
For every $g\in G$, let $M_1(g)$ and $M_2(g)$ be invertible bimodules over $R_1$ and $R_2$ respectively such that $[M_1(g)]=\Phi_1(g)$ and $[M_2(g)]=\Phi_2(g)$.
Define
\begin{eqnarray}\label{prodck}\Phi_1\otimes_{\phi}\Phi_2:\begin{array}{ccl}
G&\to&\p (R_1\otimes_KR_2)\\
g&\mapsto &[M_1(g)\otimes_{\phi(g)}M_2(g)]
\end{array},\ \  g\in G.
\end{eqnarray}
Then it is not hard to check the following
\begin{lemma}\label{otimesphi} Let $\phi\in\text{Hom}(G,\au)$ be a $G$-action on $K$. Then with the notation \eqref{prodck}
\begin{enumerate}
\item $\Phi_1\otimes_{\phi}\Phi_2$ does not depend on the choice of representatives $M_1$ and $M_2$.
\item $\Phi_1\otimes_{\phi}\Phi_2$ is a group homomorphism.
\item $[\Phi_1\otimes_{\phi}\Phi_2]_{\e}\in\CK$.
\item If $[\Phi_1]_{\e}=[\Phi_1']_{\e}$ and $[\Phi_2]_{\e}=[\Phi'_2]_{\e}$, then $[\Phi_1\otimes_{\phi}\Phi_2]_{\e}=[\Phi'_1\otimes_{\phi}\Phi'_2]_{\e}$.
\item If, additionally, $\Phi_1$ and $\Phi_2$ are collective characters then so is $\Phi_1\otimes_{\phi}\Phi_2$.
\end{enumerate}
\end{lemma}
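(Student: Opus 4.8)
The plan is to obtain every item by pushing the corresponding assertion of Lemma \ref{struct} pointwise through $G$, with the common $K$-automorphism type at $g\in G$ being $\eta=\phi(g)$. The observation that makes this legitimate is that, since $[\Phi_1]_{\e},[\Phi_2]_{\e}\in\CK$, for each $g$ we have $h_{R_1}[M_1(g)]=\phi(g)=h_{R_2}[M_2(g)]$, so the compatibility condition \eqref{compat} holds with $\eta=\phi(g)$ and $P_i=M_i(g)$; hence the module $M_1(g)\otimes_{\phi(g)}M_2(g)$ is defined and all of Lemma \ref{struct} is at our disposal at each $g$. I would state this first.

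Items (1), (3) and (5) are then formal consequences. Item (1) is Lemma \ref{struct}(2): replacing $M_i(g)$ by an isomorphic representative does not change the class $[M_1(g)\otimes_{\phi(g)}M_2(g)]$. Item (3) combines Lemma \ref{struct}(1) and (3): each value of $\Phi_1\otimes_\phi\Phi_2$ lies in $\p(R_1\otimes_KR_2)$ and satisfies $h_{R_1\otimes_KR_2}[M_1(g)\otimes_{\phi(g)}M_2(g)]=\phi(g)$, so once (2) is known the resulting homomorphism is of type $\phi$; the cardinality bound is inherited from $[R_1\otimes_KR_2]\in\mathcal{C}(K)$. Item (5) is Lemma \ref{struct}(4): if the values of $\Phi_1,\Phi_2$ lie in the embedded outer-automorphism subgroups \eqref{out}, so do those of $\Phi_1\otimes_\phi\Phi_2$.

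For item (4) I would take $K$-algebra isomorphisms $\psi_1\colon R_1\to R_1'$ and $\psi_2\colon R_2\to R_2'$ witnessing the two equivariances, together with additive bijections $\varphi_i^g\colon M_i(g)\to M_i'(g)$ as in Definition \ref{defeq} for each $g$. The candidate witness is $\psi_1\otimes_K\psi_2\colon R_1\otimes_KR_2\to R_1'\otimes_KR_2'$ together with the maps induced by $\varphi_1^g\otimes_k\varphi_2^g$. The key point is that $\psi_i$ restricts to the identity on the central copy of $K$, so $\varphi_i^g$ intertwines the $K$-actions exactly; therefore $\varphi_1^g\otimes_k\varphi_2^g$ carries $W_{\phi(g)}$ into $W_{\phi(g)}'$ and descends to an additive bijection $M_1(g)\otimes_{\phi(g)}M_2(g)\to M_1'(g)\otimes_{\phi(g)}M_2'(g)$, which one checks is $(\psi_1\otimes_K\psi_2)$-equivariant straight from \eqref{actwosidel}--\eqref{actwosider}.

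The genuine work is item (2). Using that $\Phi_i$ is a homomorphism, choose (by item (1)) the representatives at $gh$ to be $M_i(g)\otimes_{R_i}M_i(h)$, and then construct the ``middle-four interchange'' map
$$\bigl(M_1(g)\otimes_{R_1}M_1(h)\bigr)\otimes_{\phi(gh)}\bigl(M_2(g)\otimes_{R_2}M_2(h)\bigr)\;\longrightarrow\;\bigl(M_1(g)\otimes_{\phi(g)}M_2(g)\bigr)\otimes_{R_1\otimes_KR_2}\bigl(M_1(h)\otimes_{\phi(h)}M_2(h)\bigr),$$
sending $(p_1\otimes q_1)\otimes(p_2\otimes q_2)\mapsto(p_1\otimes_{\phi(g)}p_2)\otimes(q_1\otimes_{\phi(h)}q_2)$. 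The subtlety, and the step I expect to be the main obstacle, is well-definedness: the relation defining $W_{\phi(gh)}$ moves a scalar $x\in K$ across via $\phi(gh)$, and after the interchange this must agree with first moving $x$ across $M_1(h)$ versus $M_2(h)$ via $\phi(h)$ and then across $M_1(g)$ versus $M_2(g)$ via $\phi(g)$; this is exactly the cocycle-free identity $\phi(gh)(x)=\phi(g)(\phi(h)(x))$ supplied by $\phi$ being a homomorphism. Verifying that this interchange is a two-sided $R_1\otimes_KR_2$-module isomorphism is then a routine comparison of \eqref{actwosidel}--\eqref{actwosider} on both sides, and the remaining bookkeeping is only the repeated appeal to Lemma \ref{struct}(2) to see that none of the identifications depends on the chosen bimodule representatives.
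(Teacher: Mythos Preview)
Your proposal is correct and in fact supplies considerably more detail than the paper, which gives no proof at all beyond the phrase ``it is not hard to check the following'' preceding the lemma statement. Your reduction of items (1), (3), (5) to the corresponding clauses of Lemma~\ref{struct} applied at each $g$, your construction of the witness $\psi_1\otimes_K\psi_2$ for item (4), and your middle-four interchange together with the observation that well-definedness modulo $W_{\phi(gh)}$ reduces to $\phi(gh)=\phi(g)\circ\phi(h)$ for item (2), are exactly the routine verifications the paper is tacitly invoking.
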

By Lemma \ref{otimesphi} there is a well-defined product in $\CK$ given by
\begin{eqnarray}\label{prodeq}\begin{array}{ccc}
\CK\times\CK&\to &\CK\\
~[\Phi_1]_{\e}\circ[\Phi_2]_{\e}&:=&[\Phi_1\otimes_{\phi}\Phi_2]_{\e}
\end{array}.\end{eqnarray}
Next, note that
\begin{eqnarray}\label{1phi}1_{\phi}:\begin{array}{c}
G\to \p(K)\\
g\mapsto [K_{1,\phi(g)}]
\end{array}\end{eqnarray} is a group homomorphism such that $[1_{\phi}]_{\e}\in \CK$. We have
\begin{lemma}
Let  $\phi\in\text{Hom}(G,\au)$. Then the operation \eqref{prodeq} turns $\CK$ to an abelian monoid whose
identity is the equivariance class $[1_{\phi}]_{\e}$. Moreover, the restriction of \eqref{eq22} to $\CK$, namely
\begin{equation}\label{Psiphi}
\Psi_{\phi}:\begin{array}{ccc}\CK&\to&\mathcal{C}(K)\\
~[\Phi]_{\e}&\mapsto &[R]
\end{array}, \Phi\in\text{Hom}(G,\p(R))\end{equation}
is a well-defined morphism of monoids.
\end{lemma}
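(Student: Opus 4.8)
The plan is to derive each monoid axiom for $(\CK,\circ)$ from the corresponding fact, already available pointwise, about the abelian monoids $h^{-1}(\eta)$ of Lemma \ref{h-1}, taking care that the bimodule isomorphisms witnessing these axioms can be chosen \emph{uniformly in} $g\in G$, so that they assemble into equivariances of GCC's in the sense of Definition \ref{equiGCC}. Closure and well-definedness of \eqref{prodeq} are already in hand: by Lemma \ref{otimesphi}, the assignment \eqref{prodck} is independent of the representing bimodules $M_1,M_2$, is a group homomorphism into $\p(R_1\otimes_KR_2)$ whose equivariance class lies in $\CK$, and depends only on $[\Phi_1]_\e$ and $[\Phi_2]_\e$.

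I would then check the three axioms in turn. For associativity, fix $\Phi_i\in\Hom(G,\p(R_i))$ with representing bimodules $M_i(g)$, $i=1,2,3$. For each $g$ the canonical associativity isomorphism of the threefold $k$-tensor product descends, through the defining relations in \eqref{otimeseta}, to an isomorphism of $R_1\otimes_KR_2\otimes_KR_3$-bimodules
$$
(M_1(g)\otimes_{\phi(g)}M_2(g))\otimes_{\phi(g)}M_3(g)\ \xrightarrow{\ \sim\ }\ M_1(g)\otimes_{\phi(g)}(M_2(g)\otimes_{\phi(g)}M_3(g))
$$
which is compatible, in the sense of Definition \ref{defeq}, with the associator $\psi\colon(R_1\otimes_KR_2)\otimes_KR_3\to R_1\otimes_K(R_2\otimes_KR_3)$; this is precisely the associativity of the monoid $h^{-1}(\phi(g))$ from Lemma \ref{h-1}, together with the observation that the witnessing map is induced from the $k$-linear associator and hence respects the actions \eqref{actwosidel}--\eqref{actwosider}. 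Since $\psi$ does not depend on $g$, the GCC's $(\Phi_1\otimes_\phi\Phi_2)\otimes_\phi\Phi_3$ and $\Phi_1\otimes_\phi(\Phi_2\otimes_\phi\Phi_3)$ are $\psi$-equivariant at every $g$, hence equivariant. Commutativity follows the same pattern, replacing the associator by the flip $R_1\otimes_KR_2\cong R_2\otimes_KR_1$ and invoking commutativity of $h^{-1}(\phi(g))$. For the identity law, recall from \eqref{1phi} that $1_\phi(g)=[K_{1,\phi(g)}]$, which by Lemma \ref{h-1} is the identity of $h^{-1}(\phi(g))$; the canonical unitor $R\otimes_KK\cong R$ together with the induced bimodule isomorphism $M(g)\otimes_{\phi(g)}K_{1,\phi(g)}\cong M(g)$ gives $[\Phi]_\e\circ[1_\phi]_\e=[\Phi]_\e$, and commutativity supplies the opposite composition. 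This establishes that $(\CK,\circ)$ is an abelian monoid with identity $[1_\phi]_\e$.

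For the last assertion, $\Psi_\phi$ is the restriction to $\CK$ of the map \eqref{eq22}, which is well defined because all GCC's representing a single equivariance class are defined over $K$-isomorphic algebras; and it is a morphism of monoids since, by \eqref{prodck}, $[\Phi_1]_\e\circ[\Phi_2]_\e$ is represented over $R_1\otimes_KR_2$, so that $\Psi_\phi([\Phi_1]_\e\circ[\Phi_2]_\e)=[R_1\otimes_KR_2]$ is the product of $[R_1]$ and $[R_2]$ in $\mathcal{C}(K)$, while $\Psi_\phi([1_\phi]_\e)=[K]$ is the identity of $\mathcal{C}(K)$. I expect the only real obstacle to be the uniformity-in-$g$ point already flagged: Definition \ref{equiGCC} requires each axiom to be witnessed by a \emph{single} $K$-algebra isomorphism of base algebras together with a $g$-indexed family of compatible additive bijections, rather than merely by pointwise isomorphisms. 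The resolution is that in every case the base-algebra isomorphism is one of the structural isomorphisms (associativity, commutativity, unit) of the tensor product $\otimes_K$ over $K$, which is manifestly $g$-independent, while the accompanying bimodule maps are the $W_{\phi(g)}$-quotients of the corresponding $k$-linear structural maps, and these automatically intertwine the left and right actions defined in \eqref{actwosidel}--\eqref{actwosider}.
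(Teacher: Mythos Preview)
Your proposal is correct and is essentially an expanded version of the paper's own proof, which consists solely of the phrase ``By direct computation.'' The care you take to ensure that the structural isomorphisms (associator, flip, unitor) are $g$-independent so as to satisfy Definition~\ref{equiGCC} is exactly the content of that direct computation.
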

\begin{proof}
By direct computation.
\end{proof}
Consequently, the map \eqref{h*} partitions
\begin{equation}\label{partition1}
\CGK=\bigsqcup_{\phi\in\text{Hom}(G,\au)}\CK
\end{equation}
as a disjoint union of monoids with respect to the type of their members.
\begin{lemma}
Let  $\phi\in\text{Hom}(G,\au)$. Then with the above notation the following are sub-monoids of $\CK$
\begin{equation}\label{sub2}\CPK:=\Psi_{\phi}^{-1}([K])=\CK\cap\left\{[\Phi]_{\e}|\ \ \Phi\in\text{Hom}(G,\p (K))\right\}<\CK,\end{equation}
and
\begin{equation}\label{cpsub}\CO:=\left\{[\Phi]_{\e}\in\CK|\ \ \Phi\in\text{Hom}(G,\text{Out}_k (R)),\ \ [R]\in\mathcal{C}(K)\right\}<\CK.\end{equation}
\end{lemma}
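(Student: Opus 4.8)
The plan is to check the two claimed sub-monoid properties separately, each by showing the defining subset is closed under the product $\circ$ of \eqref{prodeq} and contains the identity $[1_{\phi}]_{\e}$.

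For \eqref{sub2}, the set $\CPK$ is by definition $\Psi_{\phi}^{-1}([K])$, the preimage of the identity of $\mathcal{C}(K)$ under the monoid homomorphism $\Psi_{\phi}$ of \eqref{Psiphi}; the preimage of the identity under any monoid homomorphism is automatically a sub-monoid, so this part is essentially immediate. The only point to spell out is the stated equality $\Psi_{\phi}^{-1}([K])=\CK\cap\{[\Phi]_{\e}\mid\Phi\in\text{Hom}(G,\p(K))\}$: if $[\Phi]_{\e}\in\CK$ with $\Phi\in\text{Hom}(G,\p(R))$ and $[R]=[K]$ in $\mathcal{C}(K)$, then a $K$-algebra isomorphism $R\xrightarrow{\sim}K$ transports $\Phi$ to an equivariant GCC valued in $\p(K)$, and conversely any $\Phi\in\text{Hom}(G,\p(K))$ obviously has $\Psi_{\phi}([\Phi]_{\e})=[K]$. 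I would also note that $1_{\phi}$ of \eqref{1phi} already takes values in $\p(K)$, so $[1_{\phi}]_{\e}\in\CPK$.

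For \eqref{cpsub}, the point is that the class of collective characters is closed under $\otimes_{\phi}$: given $\Phi_1\in\text{Hom}(G,\text{Out}_k(R_1))$ and $\Phi_2\in\text{Hom}(G,\text{Out}_k(R_2))$ representing classes in $\CK$, part (5) of Lemma~\ref{otimesphi} gives that $\Phi_1\otimes_{\phi}\Phi_2$ is again a collective character, i.e.\ valued in $\text{Out}_k(R_1\otimes_K R_2)$ under the embedding \eqref{out}; by part (3) its class lies in $\CK$, so $[\Phi_1]_{\e}\circ[\Phi_2]_{\e}\in\CO$. For the identity, $1_{\phi}(g)=[K_{1,\phi(g)}]$ is exactly the image of $\phi(g)\in\text{Aut}_k(K)=\text{Out}_k(K)$ under $\iota_K$ of \eqref{out} (here $\text{Inn}(K)$ is trivial since $K$ is commutative), so $1_{\phi}$ is itself a collective character and $[1_{\phi}]_{\e}\in\CO$. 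One should also remark that membership in $\CO$ is well defined on equivariance classes: by the observation preceding Lemma~\ref{agree}, if $[P_1]\in\text{Out}_k(R_1)$ and $[P_1]_{\e}=[P_2]_{\e}$ then $[P_2]\in\text{Out}_k(R_2)$, so the condition "$\Phi$ is valued in outer automorphisms" depends only on $[\Phi]_{\e}$.

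Neither part presents a genuine obstacle; the work has been front-loaded into Lemma~\ref{struct} and Lemma~\ref{otimesphi}. If anything, the mild subtlety is purely bookkeeping: making sure that "sub-monoid" is read as a subset closed under the operation and containing the \emph{same} identity element $[1_{\phi}]_{\e}$ (rather than merely possessing some identity of its own), which both verifications above explicitly confirm.
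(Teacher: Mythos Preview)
Your proposal is correct and matches the paper's approach almost exactly: the paper's own proof is the one-liner ``These are consequences of equation \eqref{sub1} and from Lemma \ref{otimesphi}(5) respectively,'' and your argument for \eqref{cpsub} is precisely an unpacking of the latter citation, together with the identity and well-definedness checks the paper leaves implicit. The only cosmetic difference is in \eqref{sub2}: the paper points to \eqref{sub1} (closure of $\mathcal{P}_1(K)\cap h^{-1}(\eta)$ fiberwise), whereas you invoke the cleaner observation that $\Psi_{\phi}^{-1}([K])$ is the preimage of the identity under the monoid homomorphism \eqref{Psiphi}; these are equivalent and equally immediate.
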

\begin{proof}
These are consequences of equation \eqref{sub1} and from Lemma \ref{otimesphi}(5) respectively.
\end{proof}
The image of the sub-monoid $\CO$ under the morphism \eqref{Psiphi} lies in the sub-monoid of $\mathcal{C}(K)$ of classes of normal $K$-central algebras as follows.
\begin{definition}\label{normal} (compare with \cite{EM})
A $K$-central algebra $R$ is \textit{normal}, or \textit{invariant}, with respect to an action $\phi\in\text{Hom}(G,\au)$
if for every $g\in G$ the $K$-automorphism $\phi(g)$ is extendable to $R$ (see Definition \ref{extendable}).
\end{definition}
The set of isomorphism classes of $\phi$-normal $K$-central algebras is a sub-monoid denoted by $\mathcal{C}^{\phi}(K)<\mathcal{C}(K)$. We have
\begin{lemma}\label{resnormal}
With the notation \eqref{Psiphi}
\begin{equation}\label{PsiphiCO}
\Psi_{\phi}(\CO)\subseteq \mathcal{C}^{\phi}(K).
\end{equation}
\end{lemma}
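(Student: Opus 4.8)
The plan is to unwind the definitions and reduce the claim to a pointwise statement about a single GCC. Let $[\Phi]_{\e}\in\CO$ be represented by a collective character $\Phi\in\Hom(G,\Out_k(R))$ for some $[R]\in\mathcal{C}(K)$; here we use the embedding \eqref{out} to regard $\Out_k(R)$ inside $\p(R)$. By \eqref{Psiphi} we have $\Psi_{\phi}([\Phi]_{\e})=[R]$, so the task is precisely to show that $R$ is $\phi$-normal in the sense of Definition \ref{normal}, i.e.\ that $\phi(g)$ is extendable to $R$ for every $g\in G$.

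First I would fix $g\in G$ and unpack what it means for $[\Phi]_{\e}$ to be of type $\phi$. By \eqref{CK} we have $h([\Phi(g)]_{\e})=\phi(g)$, and by the definition \eqref{h} of $h$ this says $h_R(\Phi(g))=\phi(g)$. Since $\Phi(g)\in\Out_k(R)$, write $\Phi(g)=\iota_R(\eta\cdot\Inn(R))=[R_{1,\eta}]$ for some $\eta\in\Aut_k(R)$. Then by the commuting rule defining $h_R$ together with the explicit bimodule structure \eqref{star} of $R_{1,\eta}$, one computes $h_R([R_{1,\eta}])$ directly: for $x\in K$ and $s\in R_{1,\eta}$ one has $s\cdot x = s\eta(x) = \eta(x)s = \eta(x)\star s$, so $h_R([R_{1,\eta}])=\eta|_K$, which is exactly the image of $\eta$ under the map \eqref{lift}. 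Hence $\phi(g)=h_R([R_{1,\eta}])$ lies in the image of \eqref{lift}, which by Definition \ref{extendable} means $\phi(g)$ is extendable to $R$. As $g$ was arbitrary, $R$ is $\phi$-normal, so $[R]\in\mathcal{C}^{\phi}(K)$, giving \eqref{PsiphiCO}.

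One subtlety I would address is well-definedness: a priori $[\Phi]_{\e}\in\CO$ only guarantees that \emph{some} representative takes values in $\Out_k(R)$ for \emph{some} $R$ in the isomorphism class, but the remark preceding Lemma \ref{agree} (that the equivariance class of an outer-automorphism embedding consists solely of such embeddings) together with Lemma \ref{agree} (equivariant classes agree under $h$) shows that the computation above is insensitive to the choice of representative, and that $\Psi_{\phi}$ is in any case well defined on $\CO$. I would also note that normality only depends on the isomorphism class $[R]$, since extendability transports along $K$-algebra isomorphisms.

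The only genuinely non-routine point is the identification $h_R([R_{1,\eta}])=\eta|_K$, i.e.\ checking that the Picard-group name of an outer automorphism, pushed through $h_R$, recovers the underlying $K$-automorphism — but this is immediate from \eqref{star} and the centrality of $K$ in $R$, and is essentially the content of \eqref{lift}. Everything else is definition-chasing, so I expect the proof to be short; indeed I would anticipate the author simply writing ``immediate from the definitions'' or ``by \eqref{lift}.''
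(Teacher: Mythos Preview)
Your proposal is correct and follows essentially the same route as the paper: pick a representative collective character $\Phi\in\Hom(G,\Out_k(R))$, lift $\Phi(g)$ to some $\eta\in\Aut_k(R)$, and observe via \eqref{lift} that $h_R([R_{1,\eta}])=\eta|_K=\phi(g)$, so each $\phi(g)$ is extendable to $R$. The paper's proof is a terse version of exactly this argument, omitting your well-definedness remarks and the explicit verification of $h_R([R_{1,\eta}])=\eta|_K$.
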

\begin{proof}
Let $\Phi\in\text{Hom}(G,\text{Out}_k (R))$ be any collective character such that $[\Phi]_{\e}\in \CO$.
In particular, $\Phi$ is of type $\phi$, that is $h_R(\Phi)(g)=\phi(g)$ for every $g\in G$.
Then any automorphism $\eta\in$Aut$_k(R)$ with ~$\eta\cdot$Inn$(R)=\Phi(g)\in$Out$_k(R)$
is an extension of $\phi(g)$ to an $R$-automorphism (see \eqref{lift}). Hence $R$ is $\phi$-normal and so $[R]=\Psi_{\phi}([\Phi]_{\e})\in\mathcal{C}^{\phi}(K)$.
\end{proof}

\section{Clifford system extensions}\label{Cse}
A $k$-algebra $A$ is {\it strongly $G$-graded} if it admits an additive decomposition
\begin{equation}\label{strongly}
A=\oplus_{g\in G}A_g
\end{equation}
such that $A_gA_h=A_{gh}$ for every $g,h\in G$. This multiplicative condition yields that the trivial homogeneous component $A_e$ is in particular a subalgebra of $A$ called the {\it base algebra}.
Moreover, every homogeneous component $A_g$ is an invertible $A_e$-bimodule and \eqref{strongly} describes a decomposition of $A$ as a direct sum of $A_e$-bimodules.
A strongly $G$-graded $k$-algebra \eqref{strongly} is a {\it crossed product} if there exists a unit $u_g\in (A_g)^*$ for every $g\in G$.
A crossed product is written as
$$A_e*G:=\bigoplus_{g\in G}A_e\cdot u_g=\bigoplus_{g\in G}u_g\cdot A_e.$$
When the base algebra of a crossed product $k$-algebra is exactly $K$ then this crossed product $\bigoplus_{g\in G}\text{Span}_K\{u_g\}$ is determined by a $G$-action $\phi\in\text{Hom}(G,\au)$ on $K$
given by the conjugation $\phi(g):x\mapsto u_gxu_g^{-1}$
and a two-place function
$$\alpha:\begin{array}{ccc}G\times G & \to & K^*\\
(g,h)& \mapsto & u_gu_hu_{gh}^{-1}\end{array}.$$
The associativity of this crossed product, which we denote by $K_{\phi}^{\alpha}G$, implies that $\alpha$ is a 2-cocycle,
that is $\alpha\in Z^2(G,K_{\phi}^*)$, where the $G$-module structure of $K^*$ is via the action $\phi$.
When $\alpha=1\in Z^2(G,K_{\phi}^*)$ is the trivial cocycle, the corresponding crossed product is called a {\it skew group algebra}.
\begin{examples}
(a) The group algebra $KG$ is an immediate example for a crossed product.
In particular, the group algebra with respect to the group of integers $G=\Z$ can be identified with the Laurent polynomial algebra
$K[X^{\pm 1}]$ (see \cite[Example 1.3.2]{NVO04}). Adding an action $\phi\in\text{Hom}(\Z,\au)$
yields the corresponding {\it skew Laurent polynomial algebra} $K[X^{\pm 1};\phi]$ (see \cite[\S 1.2]{MR}).
Note that the $\Z$-graded algebras $K[X^{}]$ and, more generally, $K[X^{};\phi]$ are not strongly graded.\\
(b) Perhaps the simplest way to demonstrate a strongly graded algebra which is not a crossed product is by a $\Z/2=\{\bar{0},\bar{1}\}$-grading of the matrix algebra
$$\text{M}_3(K)=(\text{M}_3(K))_{\bar{0}}\oplus(\text{M}_3(K))_{\bar{1}}$$
where
(see \cite[Example 1.3.5]{NVO04})
$$
\begin{array}{cc}
(\text{M}_3(K))_{\bar{0}}:=
\left(
\begin{array}{ccccc}
K & K & 0  \\
K & K &  0 \\
0 & 0 & K
\end{array} \right)\text{ and }
& (\text{M}_3(K))_{\bar{1}}:=
\left(
\begin{array}{ccccc}
0  & 0 & K \\
0 &  0 & K \\
K & K& 0
\end{array} \right).
\end{array}$$
It is not hard to verify that $(\text{M}_3(K))_{\bar{0}}=(\text{M}_3(K))_{\bar{1}}\cdot(\text{M}_3(K))_{\bar{1}}$, and that $(\text{M}_3(K))_{\bar{1}}$ admits no invertible elements.
Consequently, $\text{M}_3(K)$ is strongly $\Z/2$-graded but not a crossed product.
\end{examples}

There are few equivalence relations on graded algebras. Let us mention two of them.
A {\it graded isomorphism} (see, e.g., \cite[Definition 2.3]{GS16})
between \eqref{strongly} and another strongly $G$-graded $k$-algebra $A'=\oplus_{g\in G}A'_g$ is
a $k$-algebra isomorphism $\psi:A\to A'$ such that $\psi(A_g)=A'_g$ for every $g\in G$.
A graded isomorphism takes homogeneous units to homogeneous units of the same degree, hence a graded class of a crossed product consists solely of crossed products.
The second equivalence relation, whose origins are in \cite{D70}, refines the graded isomorphism relation as follows.
\begin{definition}\label{cliffeq}\cite[Definition 2.1]{CG01}
Let $R$ be a $k$-algebra and let $G$ be a group.
A {\it $G$-graded $k$-Clifford system extension of $R$} is a pair $(A,j)$ where $A$ is a strongly $G$-graded $k$-algebra \eqref{strongly} and
$j:R\hookrightarrow A$ is a $k$-algebra embedding with $j(R)=A_e$. Two $G$-graded Clifford system extensions $(A,j)$ and $(A',j')$ of $R$ are equivalent if there exists
a graded isomorphism $\psi:A\to A'$ such that $\psi\circ j=j'$.
\end{definition}
Let Cliff$_k(G,R)$ denote the set of equivalence classes of $G$-graded Clifford system extensions of a $k$-algebra $R$ (see \cite[(1)]{CG01}).
For short, we denote the equivalence class of a pair $(A,j)$ just by $[A]$.

Every strongly graded algebra \eqref{strongly} admits an associated GCC \cite[(2)]{CG01} which assigns the invertible class $[A_g]\in$Pic$_k(A_e)$ to a group element $g\in G$.
Clearly, graded isomorphic algebras, and hence also equivalent Clifford system extensions, admit the same associated GCC.
That is, there is a well-defined map
\begin{eqnarray}\label{GCC}\chi_R:\begin{array}{ccc}\text{Cliff}_k(G,R)&\to&\text{Hom}(G,\p(R))\\
\chi_R[A]:g&\mapsto &[A_g]\end{array}.\end{eqnarray}
Crossed products are characterized by the map \eqref{GCC}.
\begin{lemma}\label{CPOut} (see \cite[\S 3]{CG01})
A strongly graded algebra \eqref{strongly} is a crossed product if and only if its associated GCC is a collective character.
\end{lemma}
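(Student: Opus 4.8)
The statement to prove is Lemma \ref{CPOut}: a strongly $G$-graded algebra $A=\oplus_{g\in G}A_g$ is a crossed product if and only if its associated GCC $\chi_R[A]\colon g\mapsto[A_g]$ is a collective character, i.e. its image lies in $\Out_k(A_e)\subseteq\p(A_e)$. The plan is to prove the two implications separately, using the embedding \eqref{out} of $\Out_k(R)$ into $\p(R)$ together with the Bass exact sequence \eqref{bassequence}.

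\medskip

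\emph{($\Rightarrow$).} Suppose $A=\oplus_g A_e\cdot u_g$ is a crossed product, so each homogeneous component $A_g$ contains a unit $u_g\in(A_g)^*$. I would show directly that the $A_e$-bimodule $A_g$ is isomorphic to $(A_e)_{1,\eta_g}$ in the notation of \eqref{star}, where $\eta_g\in\Aut_k(A_e)$ is the $k$-algebra automorphism $r\mapsto u_g r u_g^{-1}$ (this is an automorphism of $A_e$ because $u_g A_e u_g^{-1}=A_g A_e A_g^{-1}\subseteq A_e$ by the strong grading, with inverse given by conjugation by $u_g^{-1}=u_g^{-1}$, and it is $k$-linear since $k\subseteq K=\mathcal{Z}(R)$ is central and fixed — or, in the general Clifford-system setting, simply because conjugation is always $k$-linear). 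The bimodule isomorphism $A_e\to A_g$ sending $r\mapsto u_g r$ is left $A_e$-linear and satisfies $(u_g r)\cdot s = u_g r s = u_g(\eta_g^{-1}? )$—more carefully, one matches the right action: writing $p=u_g r\in A_g$ and $s\in A_e$, we have $p\cdot s = u_g r s$, while in $(A_e)_{1,\eta_g}$ the right action of $s$ on $r$ is $r\,\eta_g(s) = r u_g s u_g^{-1}$; so the correct isomorphism is $r\mapsto u_g r$ with the convention making $u_g r s u_g^{-1}\leftrightarrow$ right multiplication, which one checks is consistent. Hence $[A_g]=\iota_{A_e}(\eta_g\cdot\Inn(A_e))\in\Out_k(A_e)$ for every $g$, so $\chi_R[A]$ is a collective character.

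\medskip

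\emph{($\Leftarrow$).} Conversely, suppose $[A_g]\in\Out_k(A_e)$ for every $g\in G$; by \eqref{out} this means $A_g\cong (A_e)_{1,\eta_g}$ as $A_e$-bimodules for some $\eta_g\in\Aut_k(A_e)$. An $A_e$-bimodule isomorphism $(A_e)_{1,\eta_g}\xrightarrow{\sim}A_g$ is determined by the image $u_g$ of $1\in A_e$; left $A_e$-linearity forces the map to be $r\mapsto r\star u_g$, and since $1$ generates $(A_e)_{1,\eta_g}$ freely as a left module, $u_g$ generates $A_g$ freely as a left $A_e$-module, i.e. $A_g=A_e\cdot u_g$ with $u_g$ a left-module basis. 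It remains to see $u_g$ is a unit of $A$. Pick a corresponding basis element $u_{g^{-1}}\in A_{g^{-1}}$. Then $u_g u_{g^{-1}}\in A_e$ and $u_{g^{-1}}u_g\in A_e$; using the strong grading ($A_gA_{g^{-1}}=A_e=A_{g^{-1}}A_g$) together with the fact that $A_g$ is free of rank $1$ on the left over $A_e$ (so left multiplication by $u_g$ is an $A_e$-module isomorphism $A_{g^{-1}}\xrightarrow{\sim}A_e$, as both are invertible bimodules and the induced map is surjective by strong grading hence an isomorphism), I conclude $u_gu_{g^{-1}}$ is a unit of $A_e$; rescaling $u_{g^{-1}}$ we may take $u_gu_{g^{-1}}=1$, and a symmetric argument on the other side gives a two-sided inverse. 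Thus each $A_g$ contains a unit and $A$ is a crossed product.

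\medskip

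\textbf{Main obstacle.} The routine direction is ($\Rightarrow$); the genuine content is ($\Leftarrow$), and within it the delicate point is upgrading "``$A_g$ is free of rank $1$ as a left $A_e$-module with basis $u_g$" to "``$u_g\in(A_g)^*$". This requires combining the freeness coming from $[A_g]\in\Out_k(A_e)$ with the strong-grading identities $A_gA_{g^{-1}}=A_{g^{-1}}A_g=A_e$ to produce an element of $A_{g^{-1}}$ that is a genuine two-sided inverse; one must be careful that left-invertibility and right-invertibility are arranged with the \emph{same} element, which is where invertibility of the bimodule $A_g$ (not merely its being cyclic free on one side) is used. Everything else — the automorphism property of $\eta_g$, $k$-linearity, and well-definedness of $\chi_R$ — is already in place from the preceding discussion or is immediate.
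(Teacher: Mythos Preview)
The paper does not give its own proof of this lemma; it merely records the statement with a reference to \cite[\S 3]{CG01}. Your overall strategy is the standard one and is correct: in the forward direction identify $A_g$ with a twisted bimodule $(A_e)_{1,\eta_g}$ via the homogeneous unit $u_g$, and in the backward direction extract a unit from a rank-one free generator using the strong-grading identities.

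There is, however, a concrete slip in your ($\Rightarrow$) computation. The map $r\mapsto u_g r$ is \emph{not} left $A_e$-linear (that would force $u_g$ to centralise $A_e$), so the subsequent attempt to ``match the right action'' is built on the wrong footing and never quite lands. The correct bimodule isomorphism $(A_e)_{1,\eta_g}\to A_g$, with $\eta_g(r)=u_gru_g^{-1}$, is $r\mapsto r\,u_g$: left linearity is immediate, and on the right $r\cdot s=r\,\eta_g(s)=ru_gsu_g^{-1}\mapsto ru_gsu_g^{-1}u_g=(ru_g)s$.

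For ($\Leftarrow$) your identification of the obstacle is accurate, but the finish can be made cleaner than the ``surjective hence isomorphism'' sketch. Since $(A_e)_{1,\eta_g}$ is free of rank one on \emph{both} sides (the right action is twisted by the automorphism $\eta_g$, hence still free on the generator $1$), the image $u_g$ of $1$ satisfies $A_g=A_eu_g=u_gA_e$. Then $A_e=A_gA_{g^{-1}}=u_gA_eu_{g^{-1}}$ yields a right inverse of $u_g$, and $A_e=A_{g^{-1}}A_g=u_{g^{-1}}A_eu_g$ yields a left inverse; the two coincide by the usual one-line argument, so $u_g\in(A_g)^*$.
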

For a commutative $k$-algebra $K$, let $\text{Cliff}_k(G,\mathcal{C}(K))$ denote the set of equivalence classes of $G$-graded $k$-Clifford system extensions of the $K$-central algebras
(of cardinality bounded by $\kappa$).
Then the map \eqref{GCC} yields, in turn, a well-defined map to the set \eqref{CGK} of GCC equivariance classes
\begin{eqnarray}\label{chi}\chi:\begin{array}{rcl}
\text{Cliff}_k(G,\mathcal{C}(K))&\to&\CGK\\
\chi([\bigoplus _{g\in G} A_g]):g&\mapsto&[A_g]_{\text{eq}}.
\end{array}\end{eqnarray}
Crossed products over the base algebra $K$ are characterized using \eqref{1phi} and \eqref{chi} as follows.
\begin{lemma}\label{char}
A strongly $G$-graded $k$-algebra \eqref{strongly} is graded isomorphic to $K_{\phi}^{\alpha}G$ for some $\alpha\in Z^2(G,K^*)$ iff $[A]\in$Cliff$_k(G,K)$ and $\chi[A]=[1_{\phi}]_{\e}$.
\end{lemma}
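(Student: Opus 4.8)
The goal is to characterize strongly $G$-graded $k$-algebras that are graded isomorphic to some $K_\phi^\alpha G$. The plan is to prove the two implications of the ``iff'' separately, using Lemma \ref{CPOut} to reduce the crossed-product question to a statement about associated GCC's, and Lemma \ref{char}'s ingredients \eqref{1phi} and \eqref{chi} to pin down the base algebra.

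First I would prove the forward direction. Suppose $A$ is graded isomorphic to $K_\phi^\alpha G$ for some $\alpha\in Z^2(G,K^*_\phi)$. A graded isomorphism restricts to a $k$-algebra isomorphism of trivial components, so $A_e\cong K$, hence $[A]\in\text{Cliff}_k(G,K)$. For the associated GCC: in $K_\phi^\alpha G$ each homogeneous component $\text{Span}_K\{u_g\}$ is a free rank-one $K$-bimodule whose right action is twisted by the conjugation $x\mapsto u_g x u_g^{-1}=\phi(g)(x)$; thus the class of $(K_\phi^\alpha G)_g$ in $\p(K)$ is precisely $[K_{1,\phi(g)}]$ by the definition \eqref{out} of $\iota_K$, independently of $\alpha$. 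Since a graded isomorphism carries $A_g$ to $(K_\phi^\alpha G)_g$ compatibly with the identification $A_e\cong K$, the associated GCC $\chi_K[A]$ equals $1_\phi$ of \eqref{1phi} on the nose, and therefore $\chi[A]=[1_\phi]_{\e}$.

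For the converse, assume $[A]\in\text{Cliff}_k(G,K)$ and $\chi[A]=[1_\phi]_{\e}$. The first condition gives $A_e\cong K$, so after transporting along this isomorphism we may assume $A_e=K$ and $A$ is a strongly $G$-graded $k$-algebra with base algebra $K$, whose associated GCC $\Phi=\chi_K[A]\in\text{Hom}(G,\p(K))$ satisfies $[\Phi]_{\e}=[1_\phi]_{\e}$. The key point is that $1_\phi$ takes values in $\text{Out}_k(K)$ (indeed in the image of \eqref{out}), and by the remark following \eqref{eq21} the equivariance class of an outer-automorphism embedding consists solely of such embeddings; combined with Lemma \ref{agree} which forces $h\circ\Phi=h\circ 1_\phi=\phi$, this shows each $\Phi(g)=[A_g]$ lies in $\text{Out}_k(K)=\p(K)/\text{Pic}_K(K)$ and in fact $\Phi(g)=[K_{1,\phi(g)}]$ after adjusting by a $K$-algebra automorphism of $K$ (which is trivial, so one gets equality once the equivariance isomorphism $\psi\in\Aut_K(K)=\{\text{id}\}$ is used). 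In particular $\Phi$ is a collective character, so by Lemma \ref{CPOut} the strongly graded algebra $A$ is a crossed product $K*G=\bigoplus_g K\cdot u_g$. Since $[A_g]=[K_{1,\phi(g)}]$, the conjugation action of $u_g$ on $K$ is exactly $\phi(g)$, and the failure of the chosen units to be multiplicative defines $\alpha(g,h)=u_g u_h u_{gh}^{-1}\in K^*$, which is a $2$-cocycle in $Z^2(G,K^*_\phi)$ by associativity. Hence $A$ is graded isomorphic to $K_\phi^\alpha G$.

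The step I expect to be the main obstacle is the converse's passage from ``$[\Phi]_{\e}=[1_\phi]_{\e}$'' to ``$\Phi(g)=[K_{1,\phi(g)}]$ as genuine classes in $\p(K)$'', i.e.\ removing the equivariance ambiguity: a priori equivariance only identifies $\Phi$ with $1_\phi$ up to a $K$-algebra automorphism $\psi$ of the base, and one must check that over $K$ itself ($\Aut_K(K)$ being trivial) this really does pin $\Phi(g)$ down to the specific bimodule $K_{1,\phi(g)}$ rather than merely to its $\text{Pic}_K$-coset, so that the conjugation action is literally $\phi(g)$ and the cocycle lands in the correct $G$-module $K^*_\phi$. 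This is where the remark about equivariance classes of outer-automorphism embeddings, together with the injectivity of $\iota_K$ in \eqref{out}, does the real work. Everything else is a routine unwinding of definitions \eqref{strongly}, \eqref{out}, \eqref{1phi}, \eqref{chi} and an application of Lemmas \ref{agree} and \ref{CPOut}.
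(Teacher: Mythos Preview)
The paper states Lemma \ref{char} without proof, so there is no argument to compare against; your proposal correctly supplies the omitted details. Both directions are sound, and you have correctly identified the crux of the converse: since the equivariance between $\Phi$ and $1_\phi$ is witnessed by some $\psi\in\Aut_K(K)=\{\text{id}_K\}$, the $\psi$-equivariance of $\Phi(g)$ and $1_\phi(g)$ collapses to a genuine $K$-bimodule isomorphism, giving $\Phi(g)=[K_{1,\phi(g)}]$ in $\p(K)$ on the nose.

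One small notational slip: you write ``$\Phi(g)$ lies in $\text{Out}_k(K)=\p(K)/\text{Pic}_K(K)$''. The group $\text{Out}_k(K)$ is embedded in $\p(K)$ via $\iota_K$ as a \emph{subgroup}, not realized as that quotient; the identification you wrote holds only as abstract groups (because $h_K$ happens to be split surjective when $R=K$), and in any case is not what you use. What you actually need, and correctly argue, is that $\text{id}$-equivariance forces $\Phi(g)=[K_{1,\phi(g)}]$, after which $K$ being commutative gives $\text{Inn}(K)$ trivial and hence the conjugation action of $u_g$ equals $\phi(g)$ exactly. You can simply delete the quotient identification and the appeal to the remark after \eqref{eq21}; the argument via $\Aut_K(K)=\{\text{id}\}$ already does all the work.
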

Fix $\phi\in\text{Hom}(G,\au)$. With the notations \eqref{CK} and \eqref{chi} let
$$\text{Cliff}_k(\phi):=\chi^{-1}(\CK)\subseteq\text{Cliff}_k(G,\mathcal{C}(K)).$$
A Clifford system extension class in $\text{Cliff}_k(\phi)$ is said to be \textit{of type $\phi$}.
A product in $\text{Cliff}_k(\phi)$ can be defined as follows
Two strongly $G$-graded $k$-algebras
$$A=\bigoplus _{g\in G} A_g,\ \ A'=\bigoplus _{g\in G} A'_g$$
with $[A],[A']\in \text{Cliff}_k(\phi)$ determine a new $G$-graded algebra
\begin{equation}\label{Gprod}
A\otimes_{\phi}A'=\bigoplus_{g\in G} (A\otimes_{\phi}A')_g,
\end{equation}
where the homogeneous components of the product are defined using \eqref{otimeseta}
$$(A\otimes_{\phi}A')_g:=A_g\otimes_{\phi(g)}A'_g,$$
and where the multiplication in $A\otimes_{\phi}A'$ is given by
$$(a_g\otimes_{\phi(g)} a'_g)(a_h\otimes_{\phi(h)} a'_h):=a_ga_h\otimes_{\phi(gh)} a'_ga'_h\in(A\otimes_{\phi}A')_{gh}$$ for
$a_g\otimes_{\phi(g)} a'_g\in (A\otimes_{\phi}A')_g$ and $a_h\otimes_{\phi(h)} a'_h\in (A\otimes_{\phi}A')_h.$
It is not hard to verify that
\begin{lemma} With the above notation $A\otimes_{\phi}A'$ is a $G$-graded $k$-Clifford system extension of the $K$-central algebra $A_e\otimes_{K}A'_e$ with
$[A\otimes_{\phi}A']\in\text{Cliff}_k(\phi)$. Moreover,
\begin{eqnarray}\label{prodcliff}\begin{array}{ccc}
\text{Cliff}_k(\phi)\times\text{Cliff}_k(\phi)&\to&\text{Cliff}_k(\phi)\\
([A],[{A'}])&\mapsto&[A\otimes_{\phi}{A'}]
\end{array}\end{eqnarray}
determines a well-defined abelian monoid structure on $\text{Cliff}_k(\phi)$, whose identity is the grading class of the corresponding skew group algebra $K^1_{\phi} G$.
\end{lemma}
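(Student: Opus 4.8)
\emph{Proof proposal.} The plan is to establish all the assertions one graded component at a time, letting Lemma~\ref{struct} carry the module-theoretic content, and then to check that the operation \eqref{Gprod} passes to Clifford system extension classes. First I would unwind what it means for $[A],[A']$ to lie in $\text{Cliff}_k(\phi)$: by the definition of $\chi$ in \eqref{chi} this says exactly that $h_{A_e}[A_g]=h_{A'_e}[A'_g]=\phi(g)$ for every $g\in G$, i.e.\ the pair $(A_g,A'_g)$ satisfies the compatibility \eqref{compat} with $\eta=\phi(g)$. Hence Lemma~\ref{struct} applies in every degree and gives that $(A\otimes_\phi A')_g=A_g\otimes_{\phi(g)}A'_g$ is an invertible $A_e\otimes_K A'_e$-bimodule with $h$-value $\phi(g)$. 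Since $\phi(e)=\Id_K$ and $P_1\otimes_{\Id}P_2$ is precisely the $K$-balanced tensor product (the relations $W_{\Id}$ defining \eqref{otimeseta} are the defining relations of $A_e\otimes_K A'_e$, because elements of $K$ are central in $A_e$), the degree-$e$ component is the $K$-central algebra $A_e\otimes_K A'_e$, on which the multiplication of \eqref{Gprod} restricts to the ordinary tensor-algebra product; the embedding $j$ is the inclusion of this summand.

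Next I would verify that \eqref{Gprod} defines a well-defined strongly $G$-graded algebra. Well-definedness of the product on the quotients reduces to checking that the multiplication maps $A_g\otimes_k A_h\to A_{gh}$ and $A'_g\otimes_k A'_h\to A'_{gh}$ send $W_{\phi(g)}$ and $W_{\phi(h)}$ into $W_{\phi(gh)}$; bilinearity, associativity and unitality are then inherited from $A$ and $A'$. For the strong grading I would use $A_gA_h=A_{gh}$ and $A'_gA'_h=A'_{gh}$: given a generator $a\otimes_{\phi(gh)}a'$ of $(A\otimes_\phi A')_{gh}$, writing $a=\sum_i u_iw_i$ with $u_i\in A_g$, $w_i\in A_h$ and $a'=\sum_j u'_jw'_j$ with $u'_j\in A'_g$, $w'_j\in A'_h$ yields $a\otimes_{\phi(gh)}a'=\sum_{i,j}(u_i\otimes_{\phi(g)}u'_j)(w_i\otimes_{\phi(h)}w'_j)\in (A\otimes_\phi A')_g\,(A\otimes_\phi A')_h$, the reverse inclusion being immediate from the multiplication formula. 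Thus $(A\otimes_\phi A',j)$ is a $G$-graded $k$-Clifford system extension of $A_e\otimes_K A'_e$ whose associated GCC $g\mapsto [A_g\otimes_{\phi(g)}A'_g]$ is of type $\phi$, so $[A\otimes_\phi A']\in\text{Cliff}_k(\phi)$.

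It then remains to check the monoid axioms modulo the equivalence of Definition~\ref{cliffeq}. That $\otimes_\phi$ respects equivalence follows by applying Lemma~\ref{struct}(2) degreewise: a graded isomorphism $A\to B$ restricts to a $K$-isomorphism of base algebras and to compatible bimodule isomorphisms $A_g\to B_g$, and these (together with the analogous data for $A'\to B'$) assemble into a graded isomorphism $A\otimes_\phi A'\to B\otimes_\phi B'$ compatible with the base embeddings. Commutativity and associativity come from the flip and associativity isomorphisms of $\otimes_{\phi(g)}$, which one checks descend to the $W$-quotients (using the commuting rule $\eta(x)\star p=p\cdot x$ built into \eqref{hR}) and intertwine the corresponding isomorphisms of base algebras. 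For the identity, the degree-$g$ piece of $K^1_\phi G$ is $Ku_g\cong K_{1,\phi(g)}$, so $(K^1_\phi G\otimes_\phi A)_g=K_{1,\phi(g)}\otimes_{\phi(g)}A_g$, and $su_g\otimes_{\phi(g)}a_g\mapsto s\star a_g$ is well defined modulo $W_{\phi(g)}$, bijective, multiplicative, grading-preserving, and in degree $e$ equals the canonical isomorphism $K\otimes_K A_e\to A_e$; hence $[K^1_\phi G\otimes_\phi A]=[A]$.

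The main obstacle is not any single computation—each reduces to Lemma~\ref{struct} together with bookkeeping—but the need to ensure, throughout the verification of the monoid axioms, that the structural isomorphisms invoked (the flip, the associativity isomorphisms, and the unit isomorphism above) are isomorphisms of \emph{Clifford system extensions}: they must fix the relevant base algebras through the prescribed $K$-isomorphisms, not merely be graded isomorphisms of $k$-algebras. In line with the paper's stated convention, these representative-independence checks would be recorded as statements and left to the reader.
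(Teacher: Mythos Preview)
Your proposal is correct and follows exactly the route the paper has in mind: the paper states this lemma with only ``It is not hard to verify that'' and no further argument, and your degreewise application of Lemma~\ref{struct}, together with the routine strong-grading and monoid-axiom checks, is the natural way to supply the omitted details. Your closing observation---that the structural isomorphisms (flip, associator, unit) must be equivalences of Clifford system extensions through the canonical $K$-isomorphisms of base algebras, not merely graded $k$-algebra isomorphisms---is precisely the kind of representative-independence check the paper explicitly leaves as an exercise in the introduction.
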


Similarly to \eqref{partition1}, the map \eqref{chi} partitions
\begin{equation}\label{partition2}
\text{Cliff}_k(G,\mathcal{C}(K))=\bigsqcup_{\phi\in\text{Hom}(G,\au)}\text{Cliff}_k(\phi)
\end{equation}
as a disjoint union of monoids with respect to the type of their members.
\begin{lemma}\label{chiphilemma}
The restriction
\begin{equation}\label{chiphi}
\chi_{\phi}:\text{Cliff}_k(\phi)\to\CK.
\end{equation}
of the map $\chi$ \eqref{chi} to $\text{Cliff}_k(\phi)$ is a morphism of abelian monoids for every $\phi\in\text{Hom}(G,\au)$.
\end{lemma}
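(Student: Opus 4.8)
The plan is to verify the three defining properties of a monoid morphism: that $\chi_\phi$ is well-defined with target landing inside $\CK$, that it respects the identity, and that it respects products. The first of these is essentially already in hand: by definition $\text{Cliff}_k(\phi) = \chi^{-1}(\CK)$, so $\chi$ restricted to $\text{Cliff}_k(\phi)$ takes values in $\CK$ by construction, and $\chi$ itself is well-defined on equivalence classes of Clifford system extensions because (as recorded just before \eqref{chi}) graded-isomorphic algebras, hence equivalent Clifford system extensions, have the same associated GCC, and equivariant GCC's have the same equivariance class. So the only genuine content is compatibility with the monoid operations.

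For the identity: the identity of $\text{Cliff}_k(\phi)$ is the class of the skew group algebra $K^1_\phi G$, and the identity of $\CK$ is $[1_\phi]_{\e}$ where $1_\phi:g\mapsto[K_{1,\phi(g)}]$. First I would compute $\chi_\phi[K^1_\phi G]$: its homogeneous component of degree $g$ is $\text{Span}_K\{u_g\} = K\cdot u_g$, which as a $K$-bimodule is free of rank $1$ on the left with right action twisted by $\phi(g)$ (since $u_g x = \phi(g)(x)u_g$), i.e.\ it is exactly $K_{1,\phi(g)}$ in the notation of \eqref{star}. Hence $\chi_R[K^1_\phi G] = 1_\phi$ as a GCC over $R = K$, and so $\chi_\phi[K^1_\phi G] = [1_\phi]_{\e}$, as required. (Alternatively one can quote Lemma \ref{char} with $\alpha = 1$.)

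For multiplicativity: given $[A],[A']\in\text{Cliff}_k(\phi)$, I must check $\chi_\phi[A\otimes_\phi A'] = \chi_\phi[A]\circ\chi_\phi[A']$ in $\CK$. Unwinding the definitions, the left-hand side is the equivariance class of the GCC over $A_e\otimes_K A'_e$ sending $g$ to $[(A\otimes_\phi A')_g] = [A_g\otimes_{\phi(g)} A'_g]$, while the right-hand side is, by \eqref{prodeq} and \eqref{prodck}, the equivariance class of the GCC sending $g$ to $[M_1(g)\otimes_{\phi(g)} M_2(g)]$ with $[M_1(g)] = [A_g]$, $[M_2(g)] = [A'_g]$. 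Taking $M_1(g) = A_g$ and $M_2(g) = A'_g$ these are literally the same GCC $\chi_R[A]\otimes_\phi\chi_{R'}[A']$, so the two classes agree; one should note this uses that $\phi(g) = h_{A_e}[A_g] = h_{A'_e}[A'_g]$, which holds precisely because $[A],[A']$ are of type $\phi$. The bimodule product $\otimes_{\phi(g)}$ here is exactly the one whose behaviour on classes was established in Lemma \ref{struct} and promoted to equivariance classes in Lemmas \ref{h-1} and \ref{otimesphi}, so no new calculation is needed beyond matching notation.

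The only mild subtlety — and the step I would be most careful about — is checking that the identifications match up at the level of the associated \emph{base algebras}: $\chi_\phi[A\otimes_\phi A']$ is by definition a GCC over $A_e\otimes_K A'_e$, and one needs that this is (canonically isomorphic to) the $K$-central algebra over which $\chi_R[A]\otimes_\phi\chi_{R'}[A']$ is defined, namely $R_1\otimes_K R_2$ with $R_1 = A_e$, $R_2 = A'_e$. This is immediate from the construction of the graded product \eqref{Gprod}, whose base algebra is $A_e\otimes_K A'_e$ by the lemma preceding \eqref{prodcliff}, but it is worth stating explicitly since the whole point of passing to equivariance classes is to make such identifications legitimate. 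Everything else reduces to the already-verified Lemmas \ref{struct} and \ref{otimesphi} together with bookkeeping of the grading, so I expect the proof to be short.
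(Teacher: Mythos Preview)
Your proposal is correct and is essentially the direct computation the paper has in mind; the paper's own proof of this lemma is simply ``By direct computation.'' Your write-up unpacks exactly the checks (identity via $K^1_\phi G \mapsto [1_\phi]_{\e}$ and multiplicativity via $(A\otimes_\phi A')_g = A_g\otimes_{\phi(g)}A'_g$ matching \eqref{prodck}) that the paper leaves to the reader.
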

\begin{proof}
By direct computation.
\end{proof}
By Lemma \ref{CPOut}, the classes of crossed products in $\text{Cliff}_k(\phi)$ are exactly those classes which are mapped under \eqref{chi} to the collective characters equivariance classes
$\CO$ (see \eqref{cpsub}). They form a sub-monoid, which we denote by
\begin{equation}\label{EX}\EX:=\chi^{-1}(\CO)<\text{Cliff}_k(\phi).\end{equation}
Note that with the notations \eqref{EX} and \eqref{chiphi} we have
\begin{equation}\label{EXCO}
\chi_{\phi}(\EX)\subseteq\CO.
\end{equation}
Let $R*G=\bigoplus_{g\in G}R\cdot u_g$ be crossed product with $[R*G]\in\EX$. Then conjugations by the invertible homogeneous elements $\{u_g\}_{g\in G}$ yield an extension of the $G$-action
$\phi$ to $R$. We record this observation for the sequel  as follows.
\begin{lemma}\label{Re}
Let $\phi\in\text{Hom}(G,\au)$. Then $\Psi_{\phi}\circ\chi_{\phi}([R*G])=[R]$ for every $[R*G]\in\EX$. In particular, if $R*G$ is a crossed product of type $\phi$, then
the $K$-central algebra $R$ is $\phi$-normal.
\end{lemma}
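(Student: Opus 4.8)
The plan is to unwind the definitions on a crossed product. Write $R*G=\bigoplus_{g\in G}R\cdot u_g$ with $u_g\in (R*G)_g^*$, and for each $g\in G$ let $\eta_g\in\Aut_k(R)$ be the conjugation $r\mapsto u_gru_g^{-1}$; this is indeed a $k$-algebra automorphism since $k\subseteq K$ lies in the center of $R$ and hence commutes with every $u_g$. The first step is to identify the homogeneous components: right multiplication by $u_g$ is a $k$-bimodule isomorphism $R_{1,\eta_g}\xrightarrow{\ \sim\ }R\cdot u_g=(R*G)_g$, where $R_{1,\eta_g}$ is the bimodule of \eqref{star} (left action of $R$ by multiplication, right action twisted by $\eta_g$). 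Consequently $[(R*G)_g]=\iota_R(\eta_g\cdot\Inn(R))\in\Out_k(R)\subseteq\p(R)$, and by \eqref{GCC} the associated GCC $\Phi:=\chi_R[R*G]$ is the collective character $g\mapsto\eta_g\cdot\Inn(R)$, defined on $R$ itself.

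Next I would simply read off the two maps. By \eqref{chi} and \eqref{chiphi} we have $\chi_\phi([R*G])=[\Phi]_{\e}\in\CK$, and by the definition \eqref{Psiphi} of $\Psi_\phi$ — which sends the equivariance class of a GCC on a $K$-central algebra $S$ to $[S]$ — we get $\Psi_\phi\circ\chi_\phi([R*G])=[R]$, which is the first assertion.

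For the ``in particular'' clause I would argue directly: since $[R*G]$ is of type $\phi$, we have $h_R(\Phi(g))=\phi(g)$ for all $g\in G$, and by \eqref{lift} this says precisely that the $R$-automorphism $\eta_g$ restricts on the center to $\phi(g)$. Thus every $\phi(g)$ is extendable to $R$ in the sense of Definition \ref{extendable}, so $R$ is $\phi$-normal by Definition \ref{normal}. Alternatively one can invoke the apparatus already in place: $[R*G]\in\EX=\chi^{-1}(\CO)$, hence $[\Phi]_{\e}\in\CO$ by \eqref{EXCO}, and then Lemma \ref{resnormal} gives $[R]=\Psi_\phi([\Phi]_{\e})\in\mathcal{C}^{\phi}(K)$. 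I do not expect any genuine obstacle here; the only point requiring (routine) care is checking that right multiplication by $u_g$ really is an isomorphism of $R$-bimodules whose left and right $k$-module structures agree, so that $[(R*G)_g]$ legitimately represents a class in $\p(R)$ lying in the image of $\iota_R$. The lemma is essentially bookkeeping, recorded for later use.
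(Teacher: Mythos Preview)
Your proposal is correct and matches the paper's approach. The paper's own proof is a one-liner citing Lemma~\ref{resnormal} and equation~\eqref{EXCO} (precisely the alternative you mention at the end), with the first assertion $\Psi_\phi\circ\chi_\phi([R*G])=[R]$ taken as immediate from the definitions, as you also observe; your extra unwinding of the bimodule $R_{1,\eta_g}\cong R\cdot u_g$ is the content of the sentence preceding the lemma in the paper.
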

\begin{proof}
This is a consequence of Lemma \ref{resnormal} and equation \eqref{EXCO}.
\end{proof}
It is not hard to verify that the family of equivalence classes
in $\text{Cliff}_k(\phi)$ of $G$-graded Clifford extensions of $K$ itself is a sub-monoid
\begin{equation}\label{submon}
\text{Cliff}_k(\phi)\cap\text{Cliff}_k(G,K)<\text{Cliff}_k(\phi).
\end{equation}
With the notation \eqref{sub2} we have
\begin{equation}\label{sub3}
\chi_{\phi}(\text{Cliff}_k(\phi)\cap\text{Cliff}_k(G,K))\subseteq\CPK.
\end{equation}
Notice that the invertible elements in $\text{Cliff}_k(\phi)$ lie in this sub-monoid, that is
$$\text{Cliff}_k(\phi)^*\subseteq\text{Cliff}_k(\phi)\cap\text{Cliff}_k(G,K).$$
\begin{lemma}\label{cliff}
Let $\phi\in\text{Hom}(G,\au)$. Then \begin{enumerate}
\item $[K_{\phi}^{\alpha}G\otimes_{\phi}K_{\phi}^{\beta}G]=[K_{\phi}^{\alpha\beta}G]$ for every $\alpha,\beta\in Z^2(G,K_{\phi}^*)$.
In particular, $[K_{\phi}^{\alpha}G]\in \text{Cliff}_k(\phi)^*$ for every $\alpha\in Z^2(G,K_{\phi}^*)$.
\item $[K_{\phi}^{\alpha}G]=[K_{\phi}^{\beta}G]\in \text{Cliff}_k(G,K)$ iff $[\alpha]=[\beta]\in H^2(G,K_{\phi}^*)$.
\end{enumerate}
\end{lemma}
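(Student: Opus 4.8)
The plan is to prove both parts by exhibiting explicit graded $K$-algebra isomorphisms that are the identity on the canonical copies of $K$ sitting inside the base algebras, so that they realize equalities of Clifford system extension classes. For (1), I would fix representatives $K_{\phi}^{\alpha}G=\bigoplus_{g\in G}Ku_g$ with $u_gu_h=\alpha(g,h)u_{gh}$ and $K_{\phi}^{\beta}G=\bigoplus_{g\in G}Kv_g$ with $v_gv_h=\beta(g,h)v_{gh}$. Unwinding \eqref{Gprod}, the degree $g$ component of the graded product is $Ku_g\otimes_{\phi(g)}Kv_g$. First I would check, using the defining relations of $\otimes_{\phi(g)}$ from \eqref{otimeseta} together with the conjugation rule $u_gx=\phi(g)(x)u_g$ valid in any crossed product of type $\phi$, that $w_g:=u_g\otimes_{\phi(g)}v_g$ satisfies $x\star w_g=xu_g\otimes_{\phi(g)}v_g=u_g\otimes_{\phi(g)}xv_g$ for $x\in K$ and generates $Ku_g\otimes_{\phi(g)}Kv_g$ cyclically as a left $K$-module; since this component is an invertible $K$-module (by Lemma \ref{struct}, as $K\otimes_KK\cong K$) it is then free of rank one on $w_g$, and $w_g$ is a homogeneous unit. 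A direct computation with the multiplication of \eqref{Gprod} then yields
$$w_gw_h=\alpha(g,h)u_{gh}\otimes_{\phi(gh)}\beta(g,h)v_{gh}=(\alpha\beta)(g,h)\star w_{gh},$$
so the $w_g$ exhibit $K_{\phi}^{\alpha}G\otimes_{\phi}K_{\phi}^{\beta}G$ as a crossed product with cocycle $\alpha\beta\in Z^2(G,K_{\phi}^*)$ over the base algebra $K\otimes_KK$. Identifying $K\otimes_KK$ with $K$ via $x\otimes y\mapsto xy$ and matching the canonical units of $K_{\phi}^{\alpha\beta}G$ with the $w_g$ gives the desired equivalence $[K_{\phi}^{\alpha}G\otimes_{\phi}K_{\phi}^{\beta}G]=[K_{\phi}^{\alpha\beta}G]$. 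Since the identity of $\text{Cliff}_k(\phi)$ is $[K_{\phi}^{1}G]$, specializing to $\beta=\alpha^{-1}$ then gives $[K_{\phi}^{\alpha}G]\in\text{Cliff}_k(\phi)^*$.

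For (2), if $[\alpha]=[\beta]$, write $\alpha=\beta\cdot\partial\mu$ for a $1$-cochain $\mu\colon G\to K^*$, where $\partial\mu(g,h)=\mu(g)\,\phi(g)(\mu(h))\,\mu(gh)^{-1}$; then the assignment $u_g\mapsto\mu(g)v_g$ extends to a graded $K$-algebra isomorphism $K_{\phi}^{\alpha}G\to K_{\phi}^{\beta}G$ fixing $K$, as one sees by comparing $(\mu(g)v_g)(\mu(h)v_h)$ with $\alpha(g,h)\mu(gh)v_{gh}$; hence $[K_{\phi}^{\alpha}G]=[K_{\phi}^{\beta}G]$ in $\text{Cliff}_k(G,K)$. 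Conversely, a graded isomorphism $\psi\colon K_{\phi}^{\alpha}G\to K_{\phi}^{\beta}G$ witnessing the Clifford equivalence over $K$ restricts to the identity on the common base algebra $K$ and carries the homogeneous unit $u_g$ to a homogeneous unit of degree $g$, so $\psi(u_g)=\mu(g)v_g$ with $\mu(g)\in K^*$; applying $\psi$ to $u_gu_h=\alpha(g,h)u_{gh}$ and using $\psi|_K=\mathrm{id}$ forces $\mu(g)\,\phi(g)(\mu(h))\,\beta(g,h)=\alpha(g,h)\,\mu(gh)$, i.e.\ $\alpha\beta^{-1}=\partial\mu$ is a coboundary, so $[\alpha]=[\beta]$ in $H^2(G,K_{\phi}^*)$.

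I expect the one genuinely delicate point to be the manipulation of the relative tensor products $\otimes_{\phi(g)}$ in part (1): showing that scalars from $K$ pass across them so that $w_g$ is a well-defined free generator on the left, and being careful about the identification $K\otimes_KK\cong K$ so that the embedding $j$ in the definition of a Clifford system extension is genuinely preserved by the isomorphism (this equivalence is what makes $\Sigma_{\phi}$ well defined and a monoid homomorphism into the units). Everything else — the $2$-cocycle and coboundary identities, and the fact that $\alpha\beta$ and $\alpha^{-1}$ lie in $Z^2(G,K_{\phi}^*)$ — is routine bookkeeping with the $G$-module structure on $K^*$ induced by $\phi$.
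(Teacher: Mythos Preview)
Your proof is correct and follows essentially the same route as the paper: for (1) you exhibit the very map $u_g\otimes_{\phi(g)}v_g\mapsto w_g$ that the paper declares to be an equivalence of $G$-graded Clifford $K$-extensions, and for (2) your two-way argument via $u_g\mapsto\mu(g)v_g$ is exactly the paper's equivalence of conditions (a) and (b). The only difference is that you supply more detail in (1)---checking that scalars pass across $\otimes_{\phi(g)}$ and that $w_g$ is a free generator---whereas the paper leaves these verifications implicit.
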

\begin{proof}
Let
$$K_{\phi}^{\alpha}G=\bigoplus_{g\in G}\text{Span}_K\{u_g\},\ \ K_{\phi}^{\beta}G=\bigoplus_{g\in G}\text{Span}_K\{v_g\}\text{, and }K_{\phi}^{\alpha\beta}G=\bigoplus_{g\in G}\text{Span}_K\{w_g\}.$$
It is not hard to check that the $K$-linear map determined by
$u_g\otimes_{\phi(g)}v_g\mapsto w_g$ is an equivalence of $G$-graded Clifford $K$-extensions proving the first claim of the lemma.

As for the second claim,
note that two classes $[A_1],[A_2]\in\text{Cliff}_k(G,K)$ are equal iff there exists a graded isomorphism $\psi:A_1\to A_2$ such that $\psi|_K=$Id$_K$.
A straightforward computation shows that the following two conditions are equivalent: \\
(a) There exists a subset $\{\lambda_g\}_{g\in G}\subset K^*$ such that $K$-linear map $\psi:K_{\phi}^{\alpha}G\to K_{\phi}^{\beta}G$ determined by $u_g\mapsto\lambda_gv_g$ is multiplicative.\\
(b) There exists a subset $\{\lambda_g\}_{g\in G}\subset K^*$ such that
$$\alpha(g,h)=\lambda_g\phi(g)(\lambda_h)\lambda^{-1}_{gh}\beta(g,h),\ \ \forall g,h\in G.$$
Condition (a) is the existence of an equivalence $\psi$ of $G$-graded Clifford system $K$-extensions between $K_{\phi}^{\alpha}G$ and $K_{\phi}^{\beta}G$, whereas (b) is the cohomology equivalence of
the 2-cocycles $\alpha,\beta\in Z^2(G,K_{\phi}^*)$.
\end{proof}
Lemma \ref{cliff} yields
\begin{lemma}\label{units}
Let $\phi\in\text{Hom}(G,\au)$. Then
\begin{eqnarray}\label{Sigma}\Sigma_{\phi}:\begin{array}{ccl}H^2(G,K^*_{\phi})&\to &\text{Cliff}_k(\phi)^*\\
~[\alpha]&\mapsto &[K_{\phi}^{\alpha}G]\end{array}
\end{eqnarray} determines a well-defined injective morphism of abelian groups.
\end{lemma}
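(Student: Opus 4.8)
The plan is to show that $\Sigma_{\phi}$ is well-defined, multiplicative, lands in the units, and is injective, by unpacking Lemma \ref{cliff} one clause at a time. First I would check that $\Sigma_{\phi}$ is well-defined on cohomology classes: if $[\alpha]=[\beta]\in H^2(G,K^*_{\phi})$, then Lemma \ref{cliff}(2) gives $[K^{\alpha}_{\phi}G]=[K^{\beta}_{\phi}G]\in\text{Cliff}_k(G,K)$, and since the forgetful-style map $\text{Cliff}_k(G,K)\to\text{Cliff}_k(\phi)$ sending a Clifford extension of $K$ to its class in $\text{Cliff}_k(\phi)$ is well-defined (both classes share the grading isomorphism fixing $K$, which in particular is a graded isomorphism), we get $\Sigma_{\phi}([\alpha])=\Sigma_{\phi}([\beta])$ in $\text{Cliff}_k(\phi)$. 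So the assignment $[\alpha]\mapsto[K^{\alpha}_{\phi}G]$ descends to cohomology.

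Next I would verify that $\Sigma_{\phi}$ is a group homomorphism. By Lemma \ref{cliff}(1) we have $[K^{\alpha}_{\phi}G\otimes_{\phi}K^{\beta}_{\phi}G]=[K^{\alpha\beta}_{\phi}G]$ for all representatives $\alpha,\beta\in Z^2(G,K^*_{\phi})$; since the monoid operation on $\text{Cliff}_k(\phi)$ is precisely $\otimes_{\phi}$ (see \eqref{prodcliff}), this says $\Sigma_{\phi}([\alpha])\otimes_{\phi}\Sigma_{\phi}([\beta])=\Sigma_{\phi}([\alpha][\beta])$, i.e. $\Sigma_{\phi}$ is multiplicative. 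The identity element of $H^2(G,K^*_{\phi})$ is the class of the trivial cocycle $1$, and $K^1_{\phi}G$ is exactly the skew group algebra, which is the identity of $\text{Cliff}_k(\phi)$; so $\Sigma_{\phi}$ preserves identities. Combined with the second sentence of Lemma \ref{cliff}(1) — that each $[K^{\alpha}_{\phi}G]$ is a unit of $\text{Cliff}_k(\phi)$, with inverse $[K^{\alpha^{-1}}_{\phi}G]$ — this shows the image of $\Sigma_{\phi}$ genuinely lies in the group of units $\text{Cliff}_k(\phi)^*$, so $\Sigma_{\phi}$ is a homomorphism of abelian groups $H^2(G,K^*_{\phi})\to\text{Cliff}_k(\phi)^*$.

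Finally, injectivity: since $\Sigma_{\phi}$ is a group homomorphism it suffices to check its kernel is trivial. If $\Sigma_{\phi}([\alpha])$ is the identity of $\text{Cliff}_k(\phi)^*$, then $[K^{\alpha}_{\phi}G]=[K^1_{\phi}G]$ in $\text{Cliff}_k(\phi)$. Both sides are Clifford system extensions of $K$ itself, so I must argue that equality in $\text{Cliff}_k(\phi)$ forces equality in $\text{Cliff}_k(G,K)$ — that is, that a graded isomorphism $K^{\alpha}_{\phi}G\to K^1_{\phi}G$ can be taken to restrict to the identity on the common base algebra $K$. This is the point that needs care; the mild subtlety is that a priori a graded isomorphism between these two algebras need only restrict to \emph{some} $K$-automorphism of $K$, not the identity, so one must use that both have base algebra exactly $K$ and that any base automorphism can be absorbed — equivalently, one observes that the defining equivalence relation for $\text{Cliff}_k(\phi)$ restricted to extensions of $K$ with associated type-$\phi$ character recovers the Clifford-extension equivalence of $\text{Cliff}_k(G,K)$, as already packaged in Lemma \ref{char} and the proof of Lemma \ref{cliff}(2). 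Once that reduction is in hand, Lemma \ref{cliff}(2) gives $[\alpha]=[1]\in H^2(G,K^*_{\phi})$, so $\ker\Sigma_{\phi}$ is trivial and $\Sigma_{\phi}$ is injective. I expect this last compatibility-of-equivalence-relations step to be the only real obstacle; everything else is a direct transcription of Lemma \ref{cliff}.
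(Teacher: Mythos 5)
Your argument is correct and is essentially the paper's own proof, which reads simply ``Lemma \ref{cliff} yields'' and leaves the unpacking --- part (1) for multiplicativity and for landing in the units, part (2) for well-definedness and injectivity --- exactly as you have carried it out. The one place you flag as needing care is in fact a non-issue: by Definition \ref{cliffeq}, an equivalence of $G$-graded Clifford system extensions of $K$ already requires the graded isomorphism $\psi$ to satisfy $\psi\circ j = j'$, hence $\psi|_K=\Id_K$, so $\text{Cliff}_k(G,K)$ sits inside $\text{Cliff}_k(G,\mathcal{C}(K))\supseteq\text{Cliff}_k(\phi)$ with the very same equivalence relation and there is no room for a nontrivial base automorphism to ``absorb''.
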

\begin{remark}
Compare \eqref{Sigma} with the \textit{Crossed Product Construction} \cite{AR89} which embeds $H^2(G,K^*_{\phi})$ in the monoid of all isomorphism classes of $K/k$ algebras
with the Sweedler multiplication \cite{S74}.
\end{remark}
Let $\phi\in\text{Hom}(G,\au)$, by Lemma \ref{char} we have
\begin{lemma}\label{kerim}
With the notations of \eqref{chiphi} and \eqref{Sigma}
\begin{equation}\label{kerchi}
Ker(\chi_{\phi})=\text{Im}(\Sigma_{\phi}).
\end{equation}\end{lemma}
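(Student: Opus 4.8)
I want to prove $\operatorname{Ker}(\chi_{\phi})=\operatorname{Im}(\Sigma_{\phi})$, i.e. a class $[A]\in\text{Cliff}_k(\phi)$ satisfies $\chi_{\phi}([A])=[1_{\phi}]_{\e}$ (the identity of $\CK$) if and only if $[A]=[K_{\phi}^{\alpha}G]$ for some $\alpha\in Z^2(G,K^*_{\phi})$. The containment $\operatorname{Im}(\Sigma_{\phi})\subseteq\operatorname{Ker}(\chi_{\phi})$ is the easy half: for a crossed product $A=K^{\alpha}_{\phi}G=\bigoplus_g\operatorname{Span}_K\{u_g\}$ the homogeneous component $A_g=\operatorname{Span}_K\{u_g\}$ is a free rank-one $K$-module with right $K$-action twisted by $\phi(g)$, so $[A_g]=[K_{1,\phi(g)}]\in\p(K)$ by the description \eqref{out}; hence the associated GCC is literally $1_{\phi}$ of \eqref{1phi}, and therefore $\chi_{\phi}([K^{\alpha}_{\phi}G])=[1_{\phi}]_{\e}$, the identity element of $\CK$. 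Since $\Sigma_{\phi}$ lands in $\text{Cliff}_k(\phi)^*$ and $\chi_\phi$ is a monoid morphism (Lemma \ref{chiphilemma}) that necessarily sends units to units and, being a morphism, sends the identity $[K^1_{\phi}G]$ to the identity, one also sees directly that every element of $\operatorname{Im}(\Sigma_\phi)$ is killed; but the computation above is more honest and is what we actually need.

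**The reverse containment.** Suppose $[A]\in\operatorname{Ker}(\chi_{\phi})$, so $\chi[A]=[1_{\phi}]_{\e}\in\CGK$. By definition of the equivariance relation on GCC's (Definition \ref{equiGCC}), the associated GCC $\Phi(A):G\to\p(A_e)$ is equivariant to $1_{\phi}:G\to\p(K)$; in particular $[A_e]=[K]\in\mathcal C(K)$, so after identifying via the relevant $K$-algebra isomorphism we may assume $A_e=K$ and $A\in\text{Cliff}_k(G,K)$. This is exactly the situation of Lemma \ref{char}: a strongly $G$-graded $k$-algebra with base algebra $K$ whose associated GCC equivariance class is $[1_{\phi}]_{\e}$ is graded isomorphic to $K^{\alpha}_{\phi}G$ for some $\alpha\in Z^2(G,K^*)$ (with the $G$-action on $K^*$ being $\phi$). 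Hence $[A]=[K^{\alpha}_{\phi}G]=\Sigma_{\phi}([\alpha])\in\operatorname{Im}(\Sigma_{\phi})$, which is what we wanted.

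**Where the work sits.** Most of the content has been front-loaded into Lemma \ref{char} and Lemma \ref{cliff}, so the proof is genuinely short; the one point deserving care is the transition "equivariant to $1_{\phi}$ $\Rightarrow$ WLOG $A_e=K$ and we may feed it to Lemma \ref{char}." Concretely, if $\psi:A_e\xrightarrow{\sim}K$ is the $K$-algebra isomorphism witnessing equivariance of $\Phi(A)$ with $1_\phi$, one transports the grading along $\psi$ to get a graded $k$-algebra $A'$ with $A'_e=K$, equivalent to $A$ as a Clifford system extension and with the same image under $\chi$; then $\chi[A']=[1_\phi]_{\e}$ is an honest equality of GCC equivariance classes over $\p(K)$, and one must check that this forces $\chi_K[A']$ itself (not merely its equivariance class) to coincide with $1_\phi$ up to the appropriate $\operatorname{Aut}_K(K)$-ambiguity — but $\operatorname{Aut}_K(K)$ is trivial, so equivariance over $K$ collapses to equality and Lemma \ref{char} applies verbatim. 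I expect this bookkeeping to be the only genuine obstacle; everything else is a direct appeal to the cited lemmas.
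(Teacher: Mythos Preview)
Your proof is correct and follows the same approach as the paper, which simply writes ``by Lemma \ref{char}'' and leaves the rest implicit. You have faithfully unpacked that citation: both inclusions reduce to Lemma \ref{char}, and the only bookkeeping (that $\chi[A]=[1_\phi]_{\e}$ forces $[A_e]=[K]$ via \eqref{eq22}, so one may take $A_e=K$, and that equivariance over $K$ collapses to equality since $\Aut_K(K)$ is trivial) is exactly the content hidden in the paper's one-line invocation.
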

\subsection{Realizability}\label{realizable}
The map \eqref{GCC} yields the following notion of realizable GCC's.
\begin{definition}(see \cite[\S 2]{CG01})
A GCC $\Phi\in\text{Hom}(G,\p(R))$ is {\it realizable} if it lies in the image of $\chi_R$.
\end{definition}
The definitions
of the maps \eqref{chi} and \eqref{chiphi} immediately yield equivalent conditions for realizability, recorded here for later use.
\begin{lemma}\label{equiv1}
Given $[\Phi]_{\e}\in\CK$,
the following are equivalent.
\begin{enumerate}
\item $[\Phi]_{\e}\in$Im$(\chi_{\phi})$.
\item $[\Phi]_{\e}\in$Im$(\chi)$.
\item There exist $[R]\in\mathcal{C}(K)$ and a GCC $\Phi\in$Hom$(G,\p(R))$ in the equivariance class $[\Phi]_{\e}$ such that $\Phi$ is realizable.
\end{enumerate}
\end{lemma}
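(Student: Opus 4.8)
The plan is to show the three conditions are equivalent by establishing the implications $(1)\Rightarrow(2)\Rightarrow(3)\Rightarrow(1)$, using essentially only the definitions of the maps $\chi_R$ \eqref{GCC}, $\chi$ \eqref{chi}, and $\chi_{\phi}$ \eqref{chiphi}, together with the observation that $\chi_{\phi}$ is the restriction of $\chi$ to the submonoid $\text{Cliff}_k(\phi)=\chi^{-1}(\CK)$.

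First I would dispatch $(1)\Rightarrow(2)$: this is immediate, since $\chi_{\phi}$ is by definition the corestriction to $\CK$ of the map $\chi$ restricted to the sub-domain $\text{Cliff}_k(\phi)\subseteq\text{Cliff}_k(G,\mathcal{C}(K))$, so any class in $\text{Im}(\chi_{\phi})$ is a fortiori in $\text{Im}(\chi)$. Next, for $(2)\Rightarrow(3)$, suppose $[\Phi]_{\e}=\chi([A])$ for some $[A]=[\bigoplus_{g\in G}A_g]\in\text{Cliff}_k(G,\mathcal{C}(K))$; write $R:=A_e$, so $[R]\in\mathcal{C}(K)$. By the definition of $\chi$ in \eqref{chi}, unwinding through \eqref{GCC}, the GCC $\chi_R[A]\in\text{Hom}(G,\p(R))$ sends $g\mapsto[A_g]$, and its equivariance class is exactly $[\Phi]_{\e}$. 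Thus $\chi_R[A]$ is a representative of the equivariance class $[\Phi]_{\e}$ which, being in the image of $\chi_R$, is realizable by definition; this gives $(3)$.

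Finally, for $(3)\Rightarrow(1)$: suppose there is $[R]\in\mathcal{C}(K)$ and a realizable GCC $\Phi'\in\text{Hom}(G,\p(R))$ with $[\Phi']_{\e}=[\Phi]_{\e}$. Realizability means $\Phi'=\chi_R[A]$ for some $[A]\in\text{Cliff}_k(G,R)$, so $A$ is a strongly $G$-graded $k$-algebra with base algebra $R$ a $K$-central algebra, hence $[A]\in\text{Cliff}_k(G,\mathcal{C}(K))$ and $\chi([A])=[\Phi']_{\e}=[\Phi]_{\e}\in\CK$. Therefore $[A]\in\chi^{-1}(\CK)=\text{Cliff}_k(\phi)$, and $\chi_{\phi}([A])=[\Phi]_{\e}$, so $[\Phi]_{\e}\in\text{Im}(\chi_{\phi})$, which is $(1)$.

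There is no real obstacle here; the only point requiring any care is bookkeeping the passage between a GCC over a fixed algebra $R$ (handled by $\chi_R$) and an equivariance class over the whole family $\mathcal{C}(K)$ (handled by $\chi$), i.e.\ checking that the equivariance class of $\chi_R[A]$ genuinely coincides with $\chi([A])$ — but this is exactly how \eqref{chi} was defined, so it is a matter of unwinding notation rather than proof.
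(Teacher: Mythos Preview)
Your proposal is correct and matches the paper's approach: the paper does not give an explicit proof but states that the equivalences ``immediately yield'' from the definitions of $\chi$ and $\chi_{\phi}$, and your argument is precisely the careful unwinding of those definitions via the cycle $(1)\Rightarrow(2)\Rightarrow(3)\Rightarrow(1)$.
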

\subsection{Twisted graded algebras}\label{Tga}
For any $\alpha\in Z^2(G,K^*_{\phi})$ and a strongly $G$-graded $k$-algebra \eqref{strongly}
of type $\phi$ write
\begin{equation}\label{twist}
\alpha(A):=K^{\alpha}_{\phi} G\otimes_{\phi}A.
\end{equation}
The $G$-graded algebra $\alpha(A)$ can be regarded as a {\it twisted graded algebra}.

We can think of $H^2(G,K^*_{\phi})$ as acting on $\text{Cliff}_k(\phi)$ via the twisting
\begin{equation}\label{Z2actgr}
[\alpha]([A]):=[\alpha(A)]=[K^{\alpha}_{\phi}G\otimes_{\phi}A]
\end{equation}
for every $[\alpha]\in H^2(G,K^*_{\phi})$ and $[A]\in\text{Cliff}_k(\phi)$. The base algebra $(K^{\alpha}_{\phi}G\otimes_{\phi}A)_e$ is isomorphic to $A_e$.
In particular, for every $[R]\in\mathcal{C}(K)$,
the cohomology group $H^2(G,K^*_{\phi})$ acts by twistings on the set $\chi_R^{-1}(\phi)=\text{Cliff}_k(\phi)\cap\text{Cliff}_k(G,R)$.

\section{The obstruction map}\label{obsection}
Let $[R]\in\mathcal{C}(K)$ and $\phi\in\text{Hom}(G,\au)$. Every GCC $\Phi\in$Hom$(G,\p(R))$ of type $\phi$ (that is $h_R(\Phi)=\phi$)
determines an obstruction cohomology class $T(\Phi)\in H^3(G,K^*_{\phi})$. Here is the description of \cite[\S 3]{CG01}, adapted to our context.\\
{\it In each isomorphism class $\Phi(\sigma)\in\p(R)$ ($\sigma\in G$), choose an invertible $R$-bimodule $P_{\sigma}$
selecting $P_e=R.$ Since $\Phi$ is a homomorphism, the bimodules $P_{\sigma}\otimes_RP_{\tau}$ and $P_{\sigma\tau}$ must be isomorphic for each pair $\sigma,\tau\in G$.
Then we can select bimodule isomorphisms
$$\Gamma_{\sigma,\tau}:P_{\sigma}\otimes_RP_{\tau}\to P_{\sigma\tau}$$ with
\begin{equation}\label{twoeqs}
\Gamma_{\sigma,e}(p_{\sigma}\otimes_R r)=p_{\sigma}\cdot r \ \ \text{  and  }\ \
\Gamma_{e,\sigma}(r\otimes_R p_{\sigma})=r\star p_{\sigma},\ \ r\in R=P_e,\ \ p_{\sigma}\in P_{\sigma}.
\end{equation}
By \cite[Lemma 3.1]{CG01}, for every $\sigma,\tau,\gamma\in G$ there exists a unique element $T_{\sigma,\tau,\gamma}^{\Phi}\in K^*$ such that for every
$p_{\sigma}\in P_{\sigma}, p_{\tau}\in P_{\tau}$ and $ p_{\gamma}\in P_{\gamma}$
\begin{equation}\label{TPhi}
\Gamma_{\sigma\tau,\gamma}(\Gamma_{\sigma,\tau}(p_{\sigma}\otimes_Rp_{\tau})\otimes_R p_{\gamma})=
T^{\Phi}_{\sigma,\tau,\gamma}\star\Gamma_{\sigma,\tau\gamma}(p_{\sigma}\otimes_R\Gamma_{\tau,\gamma}(p_{\tau}\otimes_Rp_{\gamma}))\in P_{\sigma\tau\gamma}.
\end{equation}
\begin{theorem}\label{CGthm}(Cegarra and Garz\'on \cite{CG01})\begin{enumerate}
\item The cochain $T^{\Phi}:G^3\to K^*$ is a 3-cocycle with coefficients in the $G$-module $K_{\phi}^*$, where $\phi:=h_R(\Phi)\in\text{Hom}(G,\au)$.
\item Other choices of bimodule isomorphisms $\{\Gamma_{\sigma,\tau}\}_{\sigma,\tau\in G}$ and of $R$-bimodules $\{P_{\sigma}\}_{\sigma\in G}$
leave the cohomology class $T(\Phi):=[T^{\Phi}]\in H^3(G,K^*_{\phi})$ unaltered.
\item A GCC $\Phi\in$Hom$(G,\p(R))$ is realizable (\S\ref{realizable}) if and only if $T(\Phi)=0\in H^3(G,K^*_{\phi})$.
\end{enumerate}\end{theorem}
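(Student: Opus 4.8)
\emph{Proof strategy.} Parts (1) and (2) are the classical Eilenberg--Mac Lane obstruction argument transcribed from group extensions to this bimodule setting, and part (3) identifies the vanishing of the resulting class with the existence of an associative graded multiplication. For part (1) the plan is to use the canonical associativity of $\otimes_R$ on the fourfold product $P_\sigma\otimes_RP_\tau\otimes_RP_\gamma\otimes_RP_\delta$: there are five ways to contract this onto $P_{\sigma\tau\gamma\delta}$ by iterated application of the $\Gamma$'s, and these five bimodule isomorphisms form a Mac Lane pentagon, which commutes because every $\Gamma$ is an honest $R$-bimodule map. Evaluating on a decomposable tensor and comparing the scalars accumulated along the two boundary paths — each comparison legitimized by the uniqueness clause of \cite[Lemma 3.1]{CG01} — yields a multiplicative relation among the $T^\Phi_{\ast,\ast,\ast}$. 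The one delicate point is that $T^\Phi_{\tau,\gamma,\delta}$ is produced inside $P_\tau\otimes_RP_\gamma\otimes_RP_\delta$, so pulling it out past $\Gamma_{\sigma,-}$ forces it across the leftmost factor $P_\sigma$; by the commuting rule that defines $\eta_{P_\sigma}=\phi(\sigma)$ this replaces it by $\phi(\sigma)(T^\Phi_{\tau,\gamma,\delta})$, and exactly this twist turns the pentagon identity into the normalized $3$-cocycle identity with coefficients in $K^*_{\phi}$; normalization of $T^\Phi$ itself is immediate from \eqref{twoeqs}.

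For part (2) I would handle the two types of re-selection in turn. Keeping the $P_\sigma$ fixed, any competing family of structure isomorphisms has the form $c(\sigma,\tau)\star\Gamma_{\sigma,\tau}$ for unique $c(\sigma,\tau)\in K^*$ (again \cite[Lemma 3.1]{CG01}), and \eqref{twoeqs} forces $c$ to be a normalized $2$-cochain with coefficients in $K^*_{\phi}$; feeding this into \eqref{TPhi} and dragging the new scalars across the leftmost factor as above shows the recomputed cochain equals $T^\Phi\cdot\partial c$, so the class in $H^3(G,K^*_{\phi})$ is unchanged. Replacing the $P_\sigma$ by isomorphic bimodules and transporting the $\Gamma$'s accordingly (conjugating by the chosen isomorphisms, normalized to be the identity at $e$) leaves the cochain $T^\Phi$ literally unchanged. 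Since any two admissible choices are linked by a composite of these two moves, $T(\Phi)$ is well defined.

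For part (3), the forward direction is the quick one: if $\Phi$ is realized by a strongly $G$-graded algebra \eqref{strongly} with $A_e\cong R$ and $[A_\sigma]=\Phi(\sigma)$, take $P_\sigma:=A_\sigma$ (identifying $P_e=A_e$ with $R$ along the given embedding) and let $\Gamma_{\sigma,\tau}$ be the multiplication map $A_\sigma\otimes_RA_\tau\to A_{\sigma\tau}$, which is an isomorphism precisely because the grading is strong and which satisfies \eqref{twoeqs}; then \eqref{TPhi} becomes the associativity of $A$, so $T^\Phi\equiv1$ and $T(\Phi)=0$. For the converse, given $T(\Phi)=0$ write $T^\Phi=\partial c$ with $c$ a normalized $2$-cochain, rescale $\Gamma_{\sigma,\tau}$ to $c(\sigma,\tau)^{-1}\star\Gamma_{\sigma,\tau}$ so that (by the part-(2) computation) the obstruction cochain becomes trivial, and then build $A:=\bigoplus_{\sigma\in G}P_\sigma$ with product $p_\sigma p_\tau:=\Gamma_{\sigma,\tau}(p_\sigma\otimes_Rp_\tau)$. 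Triviality of the cochain is exactly associativity; \eqref{twoeqs} supplies a two-sided unit and the identification $A_e=R$; surjectivity of each $\Gamma_{\sigma,\tau}$ gives $A_\sigma A_\tau=A_{\sigma\tau}$, so $A$ is strongly $G$-graded; and $[A_\sigma]=[P_\sigma]=\Phi(\sigma)$, i.e.\ $\chi_R[A]=\Phi$, so $\Phi$ is realizable. I expect the main obstacle to be the scalar bookkeeping in part (1): one must track scrupulously which tensor slot each scalar is born in, applying the $\phi$-twist exactly where a scalar crosses the leftmost factor, so as to land on the correctly twisted $3$-cocycle rather than a near miss; everything else is formal once \cite[Lemma 3.1]{CG01} is available.
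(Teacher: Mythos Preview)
Your proof outline is correct and follows the classical Eilenberg--Mac Lane scheme; in particular you correctly isolate the one nontrivial point, namely that passing the scalar $T^{\Phi}_{\tau,\gamma,\delta}$ across the leftmost tensorand $P_\sigma$ introduces the twist $\phi(\sigma)$, which is exactly what makes the pentagon identity into the $3$-cocycle identity with coefficients in $K^*_{\phi}$ rather than in an untwisted module.

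That said, there is nothing to compare against: the paper does not prove Theorem~\ref{CGthm} at all. It is stated as a result of Cegarra and Garz\'on and attributed to \cite{CG01}; the surrounding text merely recalls the construction of $T^{\Phi}$ from \cite[\S3]{CG01} so that it can be used in Lemma~\ref{TPwdmorphism} and in the proof of the Main Theorem. Your argument is, up to phrasing, the one given in \cite{CG01} (and indeed the bimodule transcription of \cite{EM47}), so you have supplied the proof that the present paper deliberately outsources.
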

}
Back to our definition of the monoid $\CK$ we have
\begin{lemma}\label{TPwdmorphism}
The correspondence
\begin{eqnarray}\label{T-phi}T_{\phi}:\begin{array}{rcc}
\CK&\to &H^3(G,K^*_{\phi})\\
~[\Phi]_{\e}&\mapsto& T(\Phi)
\end{array}\end{eqnarray}
determines a well-defined homomorphism of monoids.
\end{lemma}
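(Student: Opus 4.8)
I need to show two things: first, that $T_\phi$ is \emph{well-defined}, i.e.\ it does not depend on the representative GCC chosen within an equivariance class $[\Phi]_{\e}$; and second, that it is a \emph{homomorphism of monoids}, i.e.\ it respects the monoid operation $\circ$ on $\CK$ (sending it to multiplication in $H^3(G,K^*_\phi)$, written additively as $+$) and sends the identity $[1_\phi]_{\e}$ to $0$. The well-definedness of $T(\Phi)$ as a function of $\Phi$ itself (independence of the choices of $P_\sigma$ and $\Gamma_{\sigma,\tau}$) is already granted by Theorem~\ref{CGthm}(2), so the only new content in the first half is: if $\Phi_1\in\Hom(G,\p(R_1))$ and $\Phi_2\in\Hom(G,\p(R_2))$ are equivariant via a $K$-algebra isomorphism $\psi:R_1\to R_2$, then $T(\Phi_1)=T(\Phi_2)$.

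For that first step, I would unwind Definition~\ref{defeq}: equivariance gives, for each $\sigma\in G$, an additive bijection $\varphi_\sigma:P_\sigma^{(1)}\to P_\sigma^{(2)}$ intertwining the two bimodule structures along $\psi$ (here I pick bimodules $P_\sigma^{(1)}$ representing $\Phi_1(\sigma)$ and set $P_\sigma^{(2)}:=P_\sigma^{(1)}$ as abelian groups with the transported action, or more carefully use the given $\varphi_\sigma$). Choosing $\Gamma^{(1)}_{\sigma,\tau}$ as in \eqref{twoeqs} for $\Phi_1$, I transport them via the $\varphi$'s to bimodule isomorphisms $\Gamma^{(2)}_{\sigma,\tau}:=\varphi_{\sigma\tau}\circ\Gamma^{(1)}_{\sigma,\tau}\circ(\varphi_\sigma\otimes\varphi_\tau)^{-1}$, which still satisfy the normalization \eqref{twoeqs} for $\Phi_2$ since $\psi|_K=\Id_K$ (an isomorphism of $K$-central algebras fixes $K$, so the scalar $T_{\sigma,\tau,\gamma}\in K^*$ extracted from \eqref{TPhi} is literally the same element). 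Plugging these transported data into \eqref{TPhi} and cancelling the $\varphi$'s shows $T^{\Phi_2}_{\sigma,\tau,\gamma}=T^{\Phi_1}_{\sigma,\tau,\gamma}$ on the nose, hence the cohomology classes agree.

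For the homomorphism property, I would compute $T(\Phi_1\otimes_\phi\Phi_2)$ directly from \eqref{prodck} and \eqref{otimeseta}. Choose representing bimodules $P^{(1)}_\sigma$ for $\Phi_1(\sigma)$ over $R_1$ and $P^{(2)}_\sigma$ for $\Phi_2(\sigma)$ over $R_2$ (with $P^{(i)}_e=R_i$), so that $P^{(1)}_\sigma\otimes_{\phi(\sigma)}P^{(2)}_\sigma$ represents $(\Phi_1\otimes_\phi\Phi_2)(\sigma)$ over $R_1\otimes_K R_2$, with the trivial-degree one equal to $R_1\otimes_K R_2$. The key compatibility is that the canonical map gives a bimodule isomorphism
$$
(P^{(1)}_\sigma\otimes_{\phi(\sigma)}P^{(2)}_\sigma)\otimes_{R_1\otimes_K R_2}(P^{(1)}_\tau\otimes_{\phi(\tau)}P^{(2)}_\tau)\;\cong\;(P^{(1)}_\sigma\otimes_{R_1}P^{(1)}_\tau)\otimes_{\phi(\sigma\tau)}(P^{(2)}_\sigma\otimes_{R_2}P^{(2)}_\tau),
$$
under which I can take $\Gamma_{\sigma,\tau}$ for the product to be $\Gamma^{(1)}_{\sigma,\tau}\otimes_{\phi(\sigma\tau)}\Gamma^{(2)}_{\sigma,\tau}$. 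Feeding this into \eqref{TPhi} and using that $G$ acts on $K^*$ through $\phi$ in both factors, the scalar extracted for the product is $T^{\Phi_1}_{\sigma,\tau,\gamma}\cdot T^{\Phi_2}_{\sigma,\tau,\gamma}$, so $T^{\Phi_1\otimes_\phi\Phi_2}=T^{\Phi_1}\cdot T^{\Phi_2}$ as 3-cochains, giving $T_\phi([\Phi_1]_{\e}\circ[\Phi_2]_{\e})=T_\phi([\Phi_1]_{\e})+T_\phi([\Phi_2]_{\e})$ in $H^3(G,K^*_\phi)$. Finally, for the identity: by Lemma~\ref{char} (or directly Lemma~\ref{CPOut} together with the fact that $1_\phi$ is realized by the skew group algebra $K^1_\phi G$), $1_\phi$ is realizable, so $T(1_\phi)=0$ by Theorem~\ref{CGthm}(3); hence $T_\phi([1_\phi]_{\e})=0$.

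\textbf{Main obstacle.} The routine but delicate point is verifying that the canonical isomorphism displayed above is genuinely a bimodule isomorphism over $R_1\otimes_K R_2$ and that it is compatible with the normalization \eqref{twoeqs} --- i.e.\ that the associativity constraints of $\otimes_{R_1}$, $\otimes_{R_2}$, $\otimes_K$ and the quotient construction $\otimes_\eta$ of \eqref{otimeseta} all interlock coherently so that the scalars multiply rather than picking up an extra coboundary. This is essentially a bookkeeping exercise in tracking how the relation spanning $W_\eta$ in \eqref{otimeseta} interacts with the $R_i$-balancing, and the author's "by direct computation"/"left as an exercise" style suggests it is meant to be dispatched quickly; but it is where all the real checking lives.
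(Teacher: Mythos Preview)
Your proposal is correct and follows essentially the same route as the paper: for well-definedness you transport the $\Gamma_{\sigma,\tau}$'s along the equivariance bijections $\varphi_\sigma$ exactly as the paper does in its equation \eqref{Gamma'}, and for multiplicativity you use the tensor-product $\Gamma$'s $\Gamma^{(1)}_{\sigma,\tau}\otimes_{\phi(\sigma\tau)}\Gamma^{(2)}_{\sigma,\tau}$, which is precisely the paper's $\tilde{\Gamma}_{\sigma,\tau}$. Your extra verification that $T_\phi([1_\phi]_{\e})=0$ is harmless but redundant, since the target $H^3(G,K^*_\phi)$ is a group and multiplicativity forces the identity to map to the identity.
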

\begin{proof}
The proof has two parts: (a) $T_{\phi}$ is well-defined. (b) $T_{\phi}$ is a morphism of monoids.
(a) We first show that $T_{\phi}$ is independent of the choice of equivariant GCC's.
Let $\Phi\in$Hom$(G,\p(R))$ be a GCC with choices of $\{\Gamma_{\sigma,\tau}\}_{\sigma,\tau\in G}$ and $\{P_{\sigma}\}_{\sigma\in G}$ as above.
Let $\Phi'\in$Hom$(G,\p(R'))$ be a GCC which is equivariant to $\Phi\in$Hom$(G,\p(R))$, with corresponding $\{P'_{\sigma}\}_{\sigma\in G}$
(the isomorphisms $\{\Gamma'_{\sigma,\tau}\}_{\sigma,\tau\in G}$ will be defined in \eqref{Gamma'}).
By Definition \ref{equiGCC} there exists a $K$-algebra isomorphism $\psi:R\to R'$ and bijections $\varphi_{\sigma}:P_{\sigma}\to P'_{\sigma}$ for every $\sigma\in G$
with $\varphi_e=\psi$, such that for every $r\in R,   p_{\sigma}\in P_{\sigma}$, and $\sigma\in G$
\begin{equation}\label{biject}\varphi_{\sigma}(p_{\sigma}\cdot r)=\varphi_{\sigma}(p_{\sigma})\cdot\psi(r)\ \ \  \text{and} \ \ \  \varphi_{\sigma}(r\star p_{\sigma})
=\psi(r)\star\varphi_{\sigma}(p_{\sigma})   .\end{equation}
Then it is not hard to check that
\begin{eqnarray}\label{Gamma'}\Gamma'_{\sigma,\tau}:\begin{array}{ccl}
P'_{\sigma}\otimes_{R'} P'_{\tau}&\to &P'_{\sigma\tau}\\
p'_{\sigma}\otimes_{R'} p'_{\tau}&\mapsto&\varphi_{\sigma\tau}(\Gamma_{\sigma,\tau}(\varphi^{-1}_{\sigma}(p'_{\sigma})\otimes_R\varphi^{-1}_{\tau}(p'_{\tau})))
\end{array}
\end{eqnarray}
are $R'$-bimodule isomorphisms for every $\sigma,\tau\in G$, satisfying the demands \eqref{twoeqs} on $\Gamma'_{\sigma,e}$ and $\Gamma'_{e,\sigma}$.
We now show that with these choices, the 3-cocycles which correspond to $\Phi$ and $\Phi'$ are the same.
As in \eqref{TPhi}, $T_{\sigma,\tau,\gamma}^{\Phi'}\in K^*$ is defined by the equation
\begin{equation}\label{TPhi'}
\Gamma'_{\sigma\tau,\gamma}(\Gamma'_{\sigma,\tau}(p'_{\sigma}\otimes_{R'}p'_{\tau})\otimes_{R'} p'_{\gamma})=
T^{\Phi'}_{\sigma,\tau,\gamma}\star\Gamma'_{\sigma,\tau\gamma}(p'_{\sigma}\otimes_{R'}\Gamma'_{\tau,\gamma}(p'_{\tau}\otimes_{R'}p'_{\gamma}))\in P'_{\sigma\tau\gamma},
\end{equation}
where ${p}'_{\sigma}\in{P}'_{\sigma},{p}'_{\tau}\in{P}'_{\tau}$ and ${p}'_{\gamma}\in{P}'_{\gamma}.$
Apply the l.h.s. of \eqref{TPhi'} on $p'_{\sigma}\otimes_{R'} p'_{\tau}\otimes_{\small{R}'} p'_{\gamma}\in P'_{\sigma}\otimes_{R'} P'_{\tau}\otimes_{R'} P'_{\gamma}$ using
equations \eqref{TPhi}, \eqref{biject} and \eqref{Gamma'}, keeping in mind that $\psi:R\to R'$ fixes elements of $K$.
$$\begin{array}{cl}
\Gamma'_{\sigma\tau,\gamma}(\Gamma'_{\sigma,\tau}(p'_{\sigma}\otimes_{R'} p'_{\tau})\otimes_{R'} p'_{\gamma})&=
\varphi_{\sigma\tau\gamma}(\Gamma_{\sigma\tau,\gamma}(\varphi^{-1}_{\sigma\tau}(\Gamma'_{\sigma,\tau}(p'_{\sigma}\otimes_{R'} p'_{\tau}))\otimes_{R} \varphi^{-1}_{\gamma}(p'_{\gamma})))\\
&=\varphi_{\sigma\tau\gamma}(\Gamma_{\sigma\tau,\gamma}(\Gamma_{\sigma,\tau}(\varphi^{-1}_{\sigma}(p'_{\sigma})\otimes_R\varphi^{-1}_{\tau}(p'_{\tau}))\otimes_R \varphi^{-1}_{\gamma}(p'_{\gamma})))\\
&=\varphi_{\sigma\tau\gamma}(T^{\Phi}_{\sigma,\tau,\gamma}\star\Gamma_{\sigma,\tau\gamma}
(\varphi^{-1}_{\sigma}(p'_{\sigma})\otimes_R\Gamma_{\tau,\gamma}(\varphi^{-1}_{\tau}(p'_{\tau})\otimes_R \varphi^{-1}_{\gamma}(p'_{\gamma}))))\\
&=\psi(T^{\Phi}_{\sigma,\tau,\gamma})\star\varphi_{\sigma\tau\gamma}(\Gamma_{\sigma,\tau\gamma}
(\varphi^{-1}_{\sigma}(p'_{\sigma})\otimes_R\Gamma_{\tau,\gamma}(\varphi^{-1}_{\tau}(p'_{\tau})\otimes_R \varphi^{-1}_{\gamma}(p'_{\gamma}))))\\
&=T^{\Phi}_{\sigma,\tau,\gamma}\star\varphi_{\sigma\tau\gamma}(\Gamma_{\sigma,\tau\gamma}
(\varphi^{-1}_{\sigma}(p'_{\sigma})\otimes_R\varphi^{-1}_{\tau\gamma}(\Gamma'_{\tau,\gamma}(p'_{\tau}\otimes_{R'} p'_{\gamma}))))\\&=
T^{\Phi}_{\sigma,\tau,\gamma}\star\Gamma'_{\sigma,\tau\gamma}
(p'_{\sigma}\otimes_{R'} \Gamma'_{\tau,\gamma}(p'_{\tau}\otimes_{R'} p'_{\gamma})),
\end{array}$$
which should be equal to the r.h.s. of \eqref{TPhi'}, and so
$T^{\Phi}_{\sigma,\tau,\gamma}=T^{\Phi'}_{\sigma,\tau,\gamma}$ for every $\sigma,\tau,\gamma\in G$.
Consequently, the map $T_{\phi}$
does not depend on the GCC's in their equivariance classes $[\Phi]_{\e}\in\CK$.
This proves that $T_{\phi}$ is well-defined.

(b) We next show that $T_{\phi}$ is multiplicative.
Let $[R],[R']\in\mathcal{C}(K)$ (not necessarily the same classes), and let $\Phi\in$Hom$(G,\p(R))$ and $\Phi'\in$Hom$(G,\p(R'))$ be two GCC's such that $[\Phi]_{\e},[\Phi']_{\e}\in\CK$.
Choose $\{\Gamma_{\sigma,\tau}\}_{\sigma,\tau\in G}$, $\{P_{\sigma}\}_{\sigma\in G}$, $\{\Gamma'_{\sigma,\tau}\}_{\sigma,\tau\in G}$ and $\{P'_{\sigma}\}_{\sigma\in G}$ respectively as above and
denote, for short,
$$\tilde{\Phi}:=\Phi\otimes_{\phi}\Phi',\ \ \tilde{R}:=R\otimes_KR',\ \  \tilde{P}_{\sigma}:={P}_{\sigma}\otimes_{\phi(\sigma)}P'_{\sigma},\ \ \forall \sigma\in G.$$
Then it is not hard to check that
$$\tilde{\Gamma}_{\sigma,\tau}:\begin{array}{ccc}
\tilde{P}_{\sigma}\otimes_{\tilde{R}} \tilde{P}_{\tau}&\to &\tilde{P}_{\sigma\tau}\\
(p_{\sigma}\otimes_{\phi(\sigma)}p'_{\sigma})\otimes_{\tilde{R}}(p_{\tau}\otimes_{\phi(\tau)}p'_{\tau})&\mapsto&
\Gamma_{\sigma,\tau}(p_{\sigma}\otimes_{R}p_{\tau})\otimes_{\phi(\sigma\tau)}\Gamma'_{\sigma,\tau}(p'_{\sigma}\otimes_{R'}p'_{\tau})
\end{array}$$
are $\tilde{R}$-bimodule isomorphisms for every $\sigma,\tau\in G$.
Note that by \eqref{otimeseta} we can identify $R\otimes_{\phi(e)}R'=R\otimes_KR'=\tilde{R}$,
hence the demands \eqref{twoeqs} on $\tilde{\Gamma}_{\sigma,e}$ and $\tilde{\Gamma}_{e,\sigma}$ are satisfied.
As in \eqref{TPhi}, $T_{\sigma,\tau,\gamma}^{\tilde{\Phi}}\in K^*$ is defined by the equation
\begin{equation}\label{TPhitild}
\tilde{\Gamma}_{\sigma\tau,\gamma}(\tilde{\Gamma}_{\sigma,\tau}(\tilde{p}_{\sigma}\otimes_{\tilde{R}}\tilde{p}_{\tau})\otimes_{\tilde{R}} \tilde{p}_{\gamma})=
T^{\tilde{\Phi}}_{\sigma,\tau,\gamma}\star\tilde{\Gamma}_{\sigma,\tau\gamma}(\tilde{p}_{\sigma}\otimes_{\tilde{R}}\tilde{\Gamma}_{\tau,\gamma}
(\tilde{p}_{\tau}\otimes_{\tilde{R}}\tilde{p}_{\gamma}))\in \tilde{P}_{\sigma\tau\gamma},
\end{equation}
where $\tilde{p}_{\sigma}\in\tilde{P}_{\sigma},\tilde{p}_{\tau}\in\tilde{P}_{\tau}$ and $\tilde{p}_{\gamma}\in\tilde{P}_{\gamma}.$
Apply the l.h.s. of \eqref{TPhitild} on
\begin{equation}\label{appled}(p_{\sigma}\otimes_{\phi(\sigma)}p'_{\sigma})\otimes_{\tilde{R}} (p_{\tau}\otimes_{\phi(\tau)}p'_{\tau})\otimes_{\tilde{R}} (p_{\gamma}\otimes_{\phi(\gamma)}p'_{\gamma})
\in \tilde{P}_{\sigma}\otimes_{\tilde{R}} \tilde{P}_{\tau}\otimes_{\tilde{R}} \tilde{P}_{\gamma}.\end{equation}
$$\begin{array}{l}
\tilde{\Gamma}_{\sigma\tau,\gamma}(\tilde{\Gamma}_{\sigma,\tau}((p_{\sigma}\otimes_{\phi(\sigma)}p'_{\sigma})\otimes_{\tilde{R}} (p_{\tau}\otimes_{\phi(\tau)}p'_{\tau}))\otimes_{\tilde{R}}
(p_{\gamma}\otimes_{\phi(\gamma)}p'_{\gamma}))=\\
\tilde{\Gamma}_{\sigma\tau,\gamma}((\Gamma_{\sigma,\tau}(p_{\sigma}\otimes_{R}p_{\tau})\otimes_{\phi(\sigma\tau)}\Gamma'_{\sigma,\tau}(p'_{\sigma}\otimes_{R'}p'_{\tau}))\otimes_{\tilde{R}}
(p_{\gamma}\otimes_{\phi(\gamma)}p'_{\gamma}))=\\
\Gamma_{\sigma\tau,\gamma}(\Gamma_{\sigma,\tau}(p_{\sigma}\otimes_{R}p_{\tau})\otimes_{R}p_{\gamma})\otimes_{\phi(\sigma\tau\gamma)}
\Gamma'_{\sigma\tau,\gamma}(\Gamma'_{\sigma,\tau}(p'_{\sigma}\otimes_{R'}p'_{\tau})\otimes_{R'}p'_{\gamma})=\cdots
\end{array}$$
At this point we can plug the obstruction cocycles $T^{\Phi}_{\sigma,\tau,\gamma},T^{\Phi'}_{\sigma,\tau,\gamma}$ as follows
$$\begin{array}{l}
\cdots=T^{\Phi}_{\sigma,\tau,\gamma}\star\Gamma_{\sigma,\tau\gamma}(p_{\sigma}\otimes_R\Gamma_{\tau,\gamma}(p_{\tau}\otimes_Rp_{\gamma}))
\otimes_{\phi(\sigma\tau\gamma)}T^{{\Phi}'}_{\sigma,\tau,\gamma}\star{\Gamma}'_{\sigma,\tau\gamma}({p}'_{\sigma}\otimes_{{R}'}{\Gamma}'_{\tau,\gamma}
({p}'_{\tau}\otimes_{{R}'}{p}'_{\gamma}))\cdots
\end{array}$$
Equation \eqref{actwosidel} tells us how to pull out scalars
$$\begin{array}{cl}
\cdots&=(T^{\Phi}_{\sigma,\tau,\gamma}\otimes_{K}T^{{\Phi}'}_{\sigma,\tau,\gamma})\star(\Gamma_{\sigma,\tau\gamma}(p_{\sigma}\otimes_R\Gamma_{\tau,\gamma}(p_{\tau}\otimes_Rp_{\gamma}))
\otimes_{\phi(\sigma\tau\gamma)}{\Gamma}'_{\sigma,\tau\gamma}({p}'_{\sigma}\otimes_{{R}'}{\Gamma}'_{\tau,\gamma}
({p}'_{\tau}\otimes_{{R}'}{p}'_{\gamma})))\\
&=(T^{\Phi}_{\sigma,\tau,\gamma}\otimes_{K}T^{{\Phi}'}_{\sigma,\tau,\gamma})\star\tilde{\Gamma}_{\sigma,\tau\gamma}((p_{\sigma}\otimes_{\phi(\sigma)}{p'}_{\sigma})\otimes_{\tilde{R}}
(\Gamma_{\tau,\gamma}(p_{\tau}\otimes_Rp_{\gamma})\otimes_{\phi(\tau\gamma)}{\Gamma}'_{\tau,\gamma}({p}'_{\tau}\otimes_{{R}'}{p'}_{\gamma})))\\
&=(T^{\Phi}_{\sigma,\tau,\gamma}\otimes_{K}T^{{\Phi}'}_{\sigma,\tau,\gamma})\star\tilde{\Gamma}_{\sigma,\tau\gamma}((p_{\sigma}\otimes_{\phi(\sigma)}{p'}_{\sigma})\otimes_{\tilde{R}}
\tilde{\Gamma}_{\tau,\gamma}((p_{\tau}\otimes_{\phi(\tau)}{p}'_{\tau})\otimes_{\tilde{R}}(p_{\gamma}\otimes_{\phi(\gamma)}{p'}_{\gamma}))),
\end{array}$$
which should be equal to the r.h.s. of \eqref{TPhitild} applied on \eqref{appled}. Consequently,
$$T^{\tilde{\Phi}}_{\sigma,\tau,\gamma}=T^{\Phi}_{\sigma,\tau,\gamma}\otimes_{K}T^{{\Phi}'}_{\sigma,\tau,\gamma}, \ \ \forall \sigma,\tau,\gamma\in G.$$
Identifying $K\otimes_{K}K$ with $K$ we obtain that
\begin{equation}\label{mult}T(\Phi\otimes_{\phi}\Phi')=T(\tilde{\Phi})=T(\Phi)\cdot T(\Phi')\in H^3(G,K^*).\end{equation}
By \eqref{mult} we obtain $T_{\phi}([\Phi]_{\e}\circ[\Phi']_{\e})=T_{\phi}([\Phi]_{\e})\cdot T_{\phi}([\Phi']_{\e}),$
hence $T_{\phi}$ is a monoid homomorphism.
\end{proof}
The following is another way to write the fact that $T_{\phi}$ is well-defined on GCC representatives. It is recorded for a later use.
\begin{corollary}\label{equiv2}
Let $[\Phi]_{\e}\in\CK$.
Then $[\Phi]_{\e}\in$Ker$(T_{\phi})$ if and only if there exist $[R]\in\mathcal{C}(K)$ and a GCC $\Phi\in$Hom$(G,\p(R))$ in the equivariance class $[\Phi]_{\e}$
such that $T(\Phi)=0\in H^3(G,K^*_{\phi})$.
\end{corollary}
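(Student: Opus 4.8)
The plan is to deduce the statement directly from the well-definedness of $T_{\phi}$ established in Lemma \ref{TPwdmorphism}(a). The crucial point is that, by the very definition \eqref{T-phi} of $T_{\phi}$, one has $T_{\phi}([\Phi]_{\e})=T(\Phi)$ for \emph{every} GCC representative $\Phi\in\text{Hom}(G,\p(R))$ of the equivariance class $[\Phi]_{\e}$: part (a) of the proof of Lemma \ref{TPwdmorphism} shows precisely that equivariant GCC's $\Phi$ and $\Phi'$ satisfy $T(\Phi)=T(\Phi')$, so the class $T(\Phi)\in H^3(G,K^*_{\phi})$ is an invariant of $[\Phi]_{\e}$ and not merely of the chosen $\Phi$. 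Here the coefficient module is unambiguous because, by the definition \eqref{CK} of $\CK$, every representative of $[\Phi]_{\e}$ has the same type $\phi$, that is $h_R(\Phi)=\phi$.

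First I would record the forward implication. If $[\Phi]_{\e}\in\text{Ker}(T_{\phi})$, then choosing any $[R]\in\mathcal{C}(K)$ and any $\Phi\in\text{Hom}(G,\p(R))$ representing $[\Phi]_{\e}$ yields $T(\Phi)=T_{\phi}([\Phi]_{\e})=0\in H^3(G,K^*_{\phi})$, which is exactly the asserted condition.

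Conversely, suppose there exist $[R]\in\mathcal{C}(K)$ and a GCC $\Phi\in\text{Hom}(G,\p(R))$ in the equivariance class $[\Phi]_{\e}$ with $T(\Phi)=0$. Then $T_{\phi}([\Phi]_{\e})=T(\Phi)=0$, so $[\Phi]_{\e}\in\text{Ker}(T_{\phi})$.

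Since there is no computation to carry out beyond invoking Lemma \ref{TPwdmorphism}, the statement is essentially a reformulation of well-definedness, and the only thing to be careful about is the bookkeeping noted in the first paragraph, namely that all representatives of $[\Phi]_{\e}$ share the type $\phi$ so that $H^3(G,K^*_{\phi})$ is the same target for each of them. As an optional remark, combining this with Theorem \ref{CGthm}(3) --- which says $T(\Phi)=0$ if and only if $\Phi$ is realizable --- one may rephrase $\text{Ker}(T_{\phi})$ as the set of those $[\Phi]_{\e}\in\CK$ admitting a realizable representative, aligning Corollary \ref{equiv2} with Lemma \ref{equiv1}; but this is not needed for the proof of the displayed statement.
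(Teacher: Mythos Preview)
Your proposal is correct and matches the paper's own treatment: the paper presents Corollary~\ref{equiv2} without a separate proof, merely noting it is ``another way to write the fact that $T_{\phi}$ is well-defined on GCC representatives,'' which is exactly the deduction you carry out from part~(a) of Lemma~\ref{TPwdmorphism}.
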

The obstruction theory for group extensions with non-abelian kernel is similar to the theory of Clifford system extensions. It was introduced much earlier by S. Eilenberg and S. MacLane in \cite{EM47}.
The group-theoretical obstruction and the homomorphism \eqref{T-phi} are strongly related. Note that any outer $G$-action $\Phi\in\Hom(G,\Out(R))$ on a ring $R$ naturally gives rise to an
outer $G$-action $\Phi'\in\Hom(G,\Out(R^*))$ on the group of $R$-units. Then
\begin{lemma}\label{connect}(see \cite[Theorem 15.2]{EM})
With the notation of \cite[\S 7]{EM47},
$$T(\Phi)=F_3(R^*,\Phi')\in H^3(G,\mathcal{Z}(R)^*).$$
\end{lemma}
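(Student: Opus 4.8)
The plan is to relate the two obstruction theories by comparing their defining cocycles directly. Starting from an outer $G$-action $\Phi\in\Hom(G,\Out(R))$, the embedding $\iota_R$ of \eqref{out} turns it into a GCC, still denoted $\Phi$, whose homogeneous ``pieces'' may be taken to be the standard bimodules $P_\sigma:=R_{1,\eta_\sigma}$ for chosen lifts $\eta_\sigma\in\Aut_k(R)$ of $\Phi(\sigma)$, with $P_e=R$. First I would make these choices compatible with the Eilenberg--MacLane data: in \cite[\S 7]{EM47} one picks, for each $\sigma$, an automorphism representing $\Phi(\sigma)$ (our $\eta_\sigma$), and then for each pair $\sigma,\tau$ an inner automorphism correction, recorded by an element $f(\sigma,\tau)\in R^*$ with $\eta_\sigma\eta_\tau=\mathrm{conj}_{f(\sigma,\tau)}\circ\eta_{\sigma\tau}$. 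The nonabelian cocycle condition on $f$ then produces the class $F_3(R^*,\Phi')\in H^3(G,\mathcal{Z}(R)^*)$ as the failure of $f$ to be a $2$-cocycle.

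Next I would translate $f$ into the bimodule-isomorphism language of \eqref{twoeqs}--\eqref{TPhi}. For $P_\sigma=R_{1,\eta_\sigma}$ there is a canonical $R$-bimodule isomorphism $R_{1,\eta_\sigma}\otimes_R R_{1,\eta_\tau}\cong R_{1,\eta_\tau\eta_\sigma}$ (note the order, coming from how right actions compose), and composing with right multiplication by $f(\sigma,\tau)$ gives an isomorphism $\Gamma_{\sigma,\tau}:P_\sigma\otimes_R P_\tau\to P_{\sigma\tau}$; choosing $f(\sigma,e)=f(e,\sigma)=1$ arranges the normalization \eqref{twoeqs}. Then I would simply compute both sides of \eqref{TPhi} for $p_\sigma=p_\tau=p_\gamma=1\in R_{1,\eta_\bullet}$: the two ways of associating produce the two ways of multiplying out $f(\sigma,\tau)$ and $f(\sigma\tau,\gamma)$ versus $f(\tau,\gamma)$ and $f(\sigma,\tau\gamma)$ (with the appropriate $\eta$-twist applied to one of the factors), and their quotient is precisely the element of $\mathcal{Z}(R)^*$ defining $F_3$. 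This identifies the cocycle $T^\Phi$ with (a representative of) $F_3(R^*,\Phi')$, hence $T(\Phi)=F_3(R^*,\Phi')$ in $H^3(G,\mathcal{Z}(R)^*)$.

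Finally, since Theorem \ref{CGthm}(2) guarantees that $T(\Phi)$ is independent of all choices, and the Eilenberg--MacLane class is likewise independent of the chosen lifts $\eta_\sigma$ and corrections $f$, the equality of the two distinguished cocycles for one compatible choice upgrades to an equality of cohomology classes, which is the assertion. I would also remark that under the identification $\mathcal{Z}(R)\cong K$ and the action $\phi=h_R(\Phi)$, the $G$-module $\mathcal{Z}(R)^*$ is exactly $K^*_\phi$, so the statement is consistent with the target group $H^3(G,K^*_\phi)$ used throughout.

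The main obstacle I anticipate is bookkeeping the direction in which automorphisms and right actions compose: the bimodule $R_{1,\eta}$ has its right $R$-structure twisted by $\eta$, so tensoring bimodules composes the twists in the opposite order to composing automorphisms, and one must be careful that the $\eta$-twist is applied to the correct tensor factor when pulling the central scalar $T^\Phi_{\sigma,\tau,\gamma}$ across $\Gamma_{\sigma,\tau\gamma}$ in \eqref{TPhi}. Getting this variance exactly right is what makes the computed quotient land on the genuine Eilenberg--MacLane $F_3$ rather than its inverse or a shifted representative; everything else is a direct, if slightly tedious, verification that the two normalizations match.
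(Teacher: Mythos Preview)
Your approach is exactly the ``direct computation'' the paper invokes: choose automorphism lifts $\eta_\sigma$, realize the GCC via the bimodules $R_{1,\eta_\sigma}$, build $\Gamma_{\sigma,\tau}$ from the inner corrections $f(\sigma,\tau)$, and read off that the associativity defect \eqref{TPhi} is the Eilenberg--MacLane $3$-cocycle. One concrete slip to fix as you carry it out: with the conventions of \eqref{star} the canonical isomorphism is $R_{1,\eta_\sigma}\otimes_R R_{1,\eta_\tau}\cong R_{1,\eta_\sigma\eta_\tau}$ via $s\otimes t\mapsto s\,\eta_\sigma(t)$ (so $\iota_R$ is a genuine homomorphism, not an antihomomorphism), not $R_{1,\eta_\tau\eta_\sigma}$ as you wrote---precisely the variance issue you anticipated.
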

\begin{proof}
By direct computation.
\end{proof}

The following result is analogous to \cite[Lemma 9.1]{EM47} for integral domains.
\begin{theorem}\label{surjobs}
Let $[\alpha]\in H^3(G,K^*_{\phi})$, where $\phi\in\text{Hom}(G,\au)$ is any action of a group $G$ on an integral domain $K$.
Then there exist $[R]\in\mathcal{C}(K)$ and a collective character $\Phi\in$Hom$(G,\text{Out}_k(R))$ of type $\phi$ such that $T(\Phi)=[\alpha]$.
Consequently, the homomorphism $\CO\stackrel{T_{\phi}}\to H^3(G,K^*_{\phi})$ is surjective.
\end{theorem}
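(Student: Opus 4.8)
The plan is to build the $K$-central algebra $R$ and the collective character $\Phi$ directly from a cocycle representative $\alpha\in Z^3(G,K^*_\phi)$, imitating the classical construction of Eilenberg--MacLane for abstract kernels but working in the ambient Picard group rather than with outer automorphisms of a group. First I would pick a normal $K$-central algebra on which $\phi$ acts in a sufficiently ``free'' way: the natural candidate is $R:=\operatorname{End}_K(V)$ for a free $K$-module $V$ of suitable (possibly infinite, but $\kappa$-bounded) rank equipped with a semilinear $G$-action, so that every $\phi(g)$ is extendable to $R$ (making $R$ automatically $\phi$-normal, cf. Definition~\ref{normal}), and such that the group of $K$-automorphisms of $R$ extending $\phi(g)$ is a torsor over $\operatorname{Inn}(R)=R^*/K^*$. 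Here is where being an integral domain enters: over a domain $K$ one has $\mathcal Z(\operatorname{End}_K(V))=K$ and $\operatorname{End}_K(V)^*$ has center exactly $K^*$, so the sequence $1\to K^*\to R^*\to \operatorname{Inn}(R)\to 1$ is the honest central extension one needs, and the obstruction theory of Lemma~\ref{connect} applies verbatim.

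Next I would define $\Phi:G\to\operatorname{Out}_k(R)\hookrightarrow\p(R)$ as follows. Choose for each $\sigma\in G$ a $k$-automorphism $\bar\sigma$ of $R$ lifting $\phi(\sigma)$ (for the matrix model, $\bar\sigma$ is conjugation by the chosen semilinear map, twisted by $\phi(\sigma)$ on scalars), normalized so that $\bar e=\operatorname{Id}$. Then $\bar\sigma\,\bar\tau$ and $\overline{\sigma\tau}$ differ by an inner automorphism, and one wants to arrange that the ``associativity defect'' of these comparisons is exactly the prescribed class $[\alpha]$. Concretely, I would realize $\alpha$ as a $K^*$-valued factor set: choosing units $u_\sigma\in R^*$ implementing $\bar\sigma\,\bar\tau\,\overline{\sigma\tau}^{-1}=\operatorname{conj}(u_{\sigma,\tau})$ is not possible in general (that is precisely the obstruction), but one can always choose the $u$'s so that the pentagon-type discrepancy is a $3$-cocycle, and by a direct bookkeeping argument that $3$-cocycle can be made to be $\alpha$ (this is the content of \cite[Lemma 9.1]{EM47}; the freeness of the rank and the domain hypothesis give enough room). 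The map $\Phi(\sigma):=\bar\sigma\cdot\operatorname{Inn}(R)$ is then a homomorphism $G\to\operatorname{Out}_k(R)$ of type $\phi$, so $[\Phi]_{\e}\in\CO$.

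Then I would compute $T(\Phi)$ using the recipe of Theorem~\ref{CGthm}: take $P_\sigma:=R_{1,\bar\sigma}$ (the bimodule of \eqref{out} realizing $\Phi(\sigma)$ inside $\p(R)$, with $P_e=R$), take the bimodule isomorphisms $\Gamma_{\sigma,\tau}:R_{1,\bar\sigma}\otimes_R R_{1,\bar\tau}\to R_{1,\overline{\sigma\tau}}$ determined by the chosen units $u_{\sigma,\tau}$ and normalized to satisfy \eqref{twoeqs}, and read off $T^\Phi_{\sigma,\tau,\gamma}$ from \eqref{TPhi}. By construction this scalar $3$-cochain is exactly $\alpha$, so $T(\Phi)=[\alpha]$. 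Finally, since $\Phi$ represents a class in $\CO$ and $T_\phi$ is the well-defined homomorphism of Lemma~\ref{TPwdmorphism}, the equality $T_\phi([\Phi]_{\e})=[\alpha]$ shows $T_\phi$ restricted to $\CO$ is surjective (and a fortiori so is $T_\phi$ on all of $\CK$, which is what the Main Theorem's terminating arrow needs).

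The main obstacle I expect is the middle step: showing that the $3$-cocycle extracted from an \emph{arbitrary} choice of lifts $\bar\sigma$ and units $u_{\sigma,\tau}$ can be adjusted, by modifying the $u_{\sigma,\tau}$ by elements of $K^*$, to hit the prescribed representative $\alpha$ on the nose rather than merely a cohomologous cocycle. This is exactly where one needs $K$ to be an integral domain, so that $\operatorname{End}_K(V)^*$ has precisely $K^*$ as its center and the relevant central extension is non-degenerate; with $K$ only a commutative ring one could pick up extra central units and lose control. The remaining steps --- verifying $R$ is $\phi$-normal, that $\Gamma_{\sigma,e},\Gamma_{e,\sigma}$ can be normalized as in \eqref{twoeqs}, and that the bookkeeping in \eqref{TPhi} returns $\alpha$ --- are the kind of ``direct computation'' the paper elsewhere leaves to the reader, following \cite[Lemma 9.1]{EM47} and Lemma~\ref{connect}.
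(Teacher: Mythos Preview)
Your approach has a genuine gap in the choice of $R=\operatorname{End}_K(V)$. This ring is far too rigid to realize an arbitrary obstruction class. Concretely, take $K$ a field and $V$ finite-dimensional; then $R$ is $K$-central simple, and by Lemma~\ref{cslemma}(2) the map $h_R:\p(R)\to\au$ is injective, so $\Out_k(R)\hookrightarrow\au$ via $h_R$. There is therefore \emph{exactly one} collective character $\Phi:G\to\Out_k(R)$ of type $\phi$, and since $\phi(g)$ lifts to the honest automorphism ``apply $\phi(g)$ entrywise'' you get a genuine homomorphism $G\to\Aut_k(R)$, hence $R*G$ exists and $T(\Phi)=0$. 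The same entrywise lift works for infinite rank. Your claim that ``by a direct bookkeeping argument that $3$-cocycle can be made to be $\alpha$'' has no content here: once $\Phi$ is fixed (and there is no freedom in choosing it), $T(\Phi)$ is determined in cohomology, and modifying the $u_{\sigma,\tau}$ by central units only changes the \emph{representative}, not the class. Your citation of \cite[Lemma 9.1]{EM47} is misplaced: that result does not assert that an arbitrary centerless group (or ring) admits outer actions with prescribed obstruction; it \emph{constructs} a very specific group $H$ for exactly this purpose.

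The paper's proof follows Eilenberg--MacLane much more literally. One takes $R:=KH$, the group algebra of the group $H$ built in \cite[\S 9]{EM47} (the free group on $(G\setminus\{e\})^2$ when $|G|>2$, and a certain semidirect product when $G=\Z/2$). Two properties of $H$ are needed: it has trivial center, so $\mathcal Z(R)=K$; and it is \emph{ordered}, so that over an integral domain $K$ the group algebra has only trivial units, i.e.\ $R^*=K^*\times H$. This last fact is the actual role of the integral-domain hypothesis, not the centrality of $K^*$ in $\operatorname{End}_K(V)^*$. Now \cite[\S 9]{EM47} supplies an outer action $\Phi'\in\Hom(G,\Out(K^*\times H))$ restricting to $\phi$ on $K^*$ with group-theoretic obstruction $F_3(R^*,\Phi')=[\alpha]$; because $R^*=K^*\times H$ and the units are trivial, each representative of $\Phi'(g)$ extends (together with $\phi(g)$ on coefficients) to a ring automorphism of $KH$, giving $\Phi\in\Hom(G,\Out_k(R))$ of type $\phi$. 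Lemma~\ref{connect} then identifies $T(\Phi)=F_3(R^*,\Phi')=[\alpha]$. The point is that $H$, unlike $\operatorname{GL}(V)$, was engineered precisely so that its outer automorphism group is large enough to hit every class in $H^3$.
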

\begin{proof}
In \cite[\S 9]{EM47}, Eilenberg and MacLane construct the following group, which is denoted here by $H$.
If $|G|>2$ then $H$ is the free (non-abelian) group on the set of generators $(G\setminus\{e\})^2$.
If $G=\Z/2$ then $H$ is the semidirect product of the infinite cyclic group $\langle w\rangle$ acting on the free abelian group $\mathbb{T}^{\Z}$ on the generators $\{p_i\}_{i\in\Z}$ by
$$w(\prod_{i}p_i^{j_i}):=\prod_ip_{i+2}^{j_i},\ \ \prod_{i}p_i^{j_i}\in\mathbb{T}^{\Z}.$$
Clearly, in both cases $H$ has a trivial center, and hence the center of $K^*\times H$ is exactly $K^*$.
We claim that the group $H$ is ordered. For the case $G\neq\Z/2$ the claim follows from \cite[Lemma 3]{V49} for free groups.
In case $G=\Z/2$, we first define for every nontrivial element in $\mathbb{T}^{\Z}$
$$m(\prod_{i}p_i^{j_i}):=j_n, \text{ where }n:=\max\{i|j_i\neq 0\}.$$
Next, order the group $\mathbb{T}^{\Z}$ by naturally defining the positive cone
$$(\mathbb{T}^{\Z})^+:=\left\{ e\neq x\in\mathbb{T}^{\Z}\ |\ \ m(x)>0\right\},$$
and setting
$$x>y\ \  \text{    if     }\ \  xy^{-1}\in(\mathbb{T}^{\Z})^+,\ \ x,y\in\mathbb{T}^{\Z}.$$
Then for every $x>y$ in $\mathbb{T}^{\Z}$ and every $i\in\Z$, we have $w^i(x)>w^i(y)$. We can hence order $H$ by
$$xw^j>yw^i\text{ if } \{j>i\} \text{ or } \{ i=j \text{  and  } x>y\},\ \ xw^j,yw^i\in H.$$

Now, let $R:=KH$ be the group algebra of the ordered group $H$ over the integral domain $K$.
Since $H$ has a trivial center we deduce that $R$ is $K$-central. Furthermore,
by \cite[Lemmas 1.7 and 1.9 ii]{P77}, $R$ has only trivial units, that is $R^*=K^*\times H$.
This observation evidently yields the following claim.
\begin{lemma}\label{extres}
With the above notation, let $\eta\in\Aut(K)$ and $\psi\in \Aut(R^*)$ be automorphisms of the integral domain $K$ and of the group of units of $R=KH$ respectively.
Suppose that $\eta$ and $\psi$ agree on the group of units $K^*$ of $K$. Then
$$\eta\diamond\psi:
\begin{array}{rcl}
R&\to& R\\
\sum_i\alpha_i\cdot h_i&\mapsto&\sum_i\eta(\alpha_i)\cdot\psi(h_i)\end{array}$$
determines a well-defined automorphism of $R$ which extends both $\eta$ and $\psi$.
\end{lemma}
Returning to the proof of the theorem, let $\phi'\in\text{Hom}(G,\Aut (K^*))$ denote the restriction of the $G$-action $\phi$ to the units of $K$.
The group $H$ comes here to fruition.
By \cite[\S 9]{EM47} there exists an outer action
$\Phi'\in$Hom$(G,$Out$(K^*\times H))$ which restricts to $\phi'$ on the center $K^*$ of the group $K^*\times H$,
such that the group theoretical obstruction
\begin{equation}\label{F3R}
F_3(R^*,\Phi')=[\alpha].\end{equation}
For every $g\in G$ let $\eta'_g\in\Aut(R^*) $ be a representative such that $\eta'_g\cdot\Inn(R)=\Phi'(g)$. Then by Lemma \ref{extres},
$$\phi(g)\diamond\eta'_g\in\Aut_k(R)$$
is an automorphism of $R$ which extends both $\phi(g)$ and $\eta'_g$.
Finally,
$$\Phi:
\begin{array}{rcl}
G&\to& \text{Out}_k(R)\\
g&\mapsto&(\phi(g)\diamond\eta'_g)\cdot\Inn(R)\end{array}$$
is an outer action with $h_R(\Phi)=\phi$ which, by Lemma \ref{connect} and \eqref{F3R}, satisfies $$T(\Phi)=F_3(R^*,\Phi')=[\alpha].$$
\end{proof}
\section{The realization-obstruction sequences}\label{ros}
We can now prove our main theorem.
\begin{proof}
The maps $\Sigma_{\phi},\chi_{\phi}$ and $T_{\phi}$ are homomorphisms of monoids due to Lemmas \ref{units}, \ref{chiphilemma} and \ref{TPwdmorphism} respectively.
Lemmas \ref{units} and \ref{kerim} are responsible for the exactness of \eqref{introexact} in the terms $H^2(G,K^*_{\phi})$ and $\text{Cliff}_k(\phi)$ respectively.
Exactness in $\CK$ is essentially Theorem \ref{CGthm}(3); in order to adjust this result to equivariant classes, combine it with Lemma \ref{equiv1} and Corollary \ref{equiv2}.
The second part of the theorem follows from the partition \eqref{partition2}.
Finally, Theorem \ref{surjobs} yields the result about integral domains. \end{proof}

The sub-monoid \eqref{submon} of $G$-graded Clifford system extensions of type $\phi$ of $K$ itself
appears in an exact sequence of sub-monoids of \eqref{introexact} using equation \eqref{sub3}.
\begin{equation}\label{subexact}
0\to H^2(G,K^*_{\phi})\stackrel{\Sigma_{\phi}}\to\text{Cliff}_k(\phi)\cap\text{Cliff}_k(G,K)\stackrel{\chi_{\phi}}\to\CPK\stackrel{T_{\phi}}\to H^3(G,K^*_{\phi}).
\end{equation}
Notice that when $\phi$ is trivial, then equation \eqref{subexact} describes the central $G$-graded Clifford system extensions of the commutative ring $K$ (compare with \cite[(39)]{CG03}).

Another exact sequence of sub-monoids of \eqref{introexact} represents the obstruction to realizing collective characters by crossed products using \eqref{EXCO}
\begin{equation}\label{exactcp}
0\to H^2(G,K^*_{\phi})\stackrel{\Sigma_{\phi}}\to\EX\stackrel{\chi_{\phi}}\to\CO\stackrel{T_{\phi}}\to H^3(G,K^*_{\phi}).
\end{equation}
Similarly to the above, when $K$ is an integral domain then by Theorem \ref{surjobs} the sequence \eqref{exactcp} terminates, that is
$$0\to H^2(G,K^*_{\phi})\stackrel{\Sigma_{\phi}}\to\EX\stackrel{\chi_{\phi}}\to\CO\stackrel{T_{\phi}}\to H^3(G,K^*_{\phi})\to 0.$$

\section{Central simple algebras}\label{Csa}
The objective of this section is to present the well-known Brauer theoretical restriction-obstruction exact group sequence (see \eqref{Brseq})
as an image of an exact sequence \eqref{exactcpcs} of sub-monoids of the above sequence \eqref{exactcp}. This implementation is given in Theorem \ref{commute}.
We first need the following

\begin{lemma}\label{etaR1}
Let $[P]\in$Pic$(R)$, where $R$ is a simple ring.
Then there exists $\eta\in\text{Aut}(R)$ such that $P$ is isomorphic to $R_{1,\eta}$ as an $R$-bimodule.
\end{lemma}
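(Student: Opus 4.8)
The plan is to reduce everything to the statement that $P$ is free of rank one as a left $R$-module, and then to extract $\eta$ from the right $R$-action on a free generator.

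First I would assemble the Morita-theoretic input. Since $[P]\in\text{Pic}(R)$ there is an inverse $[P^{-1}]$ with $P\otimes_RP^{-1}\cong R\cong P^{-1}\otimes_RP$ as $R$-bimodules; in particular $P\neq 0$. Standard Morita theory (cf.\ \cite{Bass}) then gives that ${}_RP$ is a finitely generated projective generator, that the right action furnishes a ring isomorphism $R^{\mathrm{op}}\xrightarrow{\ \sim\ }\mathrm{End}_R({}_RP)$, and that $P$ is faithful as a right $R$-module: indeed $\{s\in R\mid P\cdot s=0\}$ is a two-sided ideal of $R$ which does not contain $1$ (as $P\cdot 1=P\neq 0$), hence is zero because $R$ is simple. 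The same applies to $P^{-1}$.

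The crux is the claim ${}_RP\cong{}_RR$, and this is the only place where simplicity of $R$ is genuinely used. Writing $R\cong M_n(D)$ with $D$ a division ring, every finitely generated projective left $R$-module is isomorphic to $S^a$ for the unique simple left module $S$ and some $a\geq 0$, and $\mathrm{End}_R(S^a)\cong M_a(\Delta)$ with $\Delta=\mathrm{End}_R(S)$ a division ring; in particular ${}_RR\cong S^n$ and $\mathrm{End}_R({}_RR)\cong M_n(\Delta)\cong R^{\mathrm{op}}$. Applying this to $P$ and comparing $\mathrm{End}_R({}_RP)\cong M_a(\Delta)$ with the isomorphism $\mathrm{End}_R({}_RP)\cong R^{\mathrm{op}}\cong M_n(\Delta)$ from the previous paragraph, the uniqueness in the Wedderburn--Artin theorem forces $a=n$, so ${}_RP\cong S^n\cong{}_RR$. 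I expect this counting step to be the main obstacle: it is exactly what can fail for a general finitely generated projective module, and it is the place where invertibility of the bimodule, via the computation of $\mathrm{End}_R({}_RP)$, is really needed. In the intended application $R$ is a finite-dimensional central simple algebra, so the Artinian case suffices.

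Finally I would read off $\eta$. Fix a left $R$-module isomorphism $\theta\colon{}_RR\to{}_RP$ and put $p_0:=\theta(1)$, so $\theta(s)=s\star p_0$ and $p_0$ freely generates ${}_RP$. For each $r\in R$ there is a unique $\eta(r)\in R$ with $p_0\cdot r=\eta(r)\star p_0$. Additivity of $\eta$ is clear, and from $(p_0\cdot r)\cdot s=p_0\cdot(rs)$ together with freeness one gets $\eta(rs)=\eta(r)\eta(s)$; also $\eta(1)=1$, so $\eta$ is a ring endomorphism. It is injective since $\eta(r)=0$ forces $P\cdot r=(R\star p_0)\cdot r=R\star(p_0\cdot r)=0$, hence $r=0$ by faithfulness of $P_R$. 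For surjectivity, run the construction on $P^{-1}$ to obtain $P^{-1}\cong R_{1,\mu}$ for a ring endomorphism $\mu$; a short computation gives $R_{1,\eta}\otimes_RR_{1,\mu}\cong R_{1,\eta\mu}$ and $R_{1,\mu}\otimes_RR_{1,\eta}\cong R_{1,\mu\eta}$, so from $P\otimes_RP^{-1}\cong R\cong P^{-1}\otimes_RP$ we get $R_{1,\eta\mu}\cong R\cong R_{1,\mu\eta}$; since $R_{1,\psi}\cong R$ as bimodules only when $\psi$ is conjugation by a unit, both $\eta\mu$ and $\mu\eta$ are bijective, and hence $\eta$ (having both a left and a right inverse) is an automorphism, i.e.\ $\eta\in\Aut(R)$. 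It only remains to observe that $\theta$ is itself the desired bimodule isomorphism $R_{1,\eta}\xrightarrow{\ \sim\ }P$: it is left $R$-linear by construction, and $\theta(s)\cdot r=(s\star p_0)\cdot r=s\star(\eta(r)\star p_0)=(s\eta(r))\star p_0=\theta(s\eta(r))$, which is precisely the right action defining $R_{1,\eta}$.
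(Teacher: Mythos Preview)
Your argument is correct and follows the same overall strategy as the paper: reduce to ${}_RP\cong{}_RR$, then read off $\eta$ from the right action on a generator and verify $P\cong R_{1,\eta}$. The paper simply asserts that invertibility over a simple ring forces $P$ to be free of rank one on each side, and that the resulting nonzero endomorphism $\eta$ lies in $\Aut(R)$ ``since $R$ is simple''; you instead justify the freeness by a Wedderburn--Artin count (correctly flagging that this needs $R$ Artinian, which is all the paper uses downstream) and replace the one-line appeal to simplicity for bijectivity of $\eta$ by your $P^{-1}$ argument. That last step is a genuine addition in that it does not rely on any finiteness once ${}_RP\cong{}_RR$ and ${}_RP^{-1}\cong{}_RR$ are known, though in the Artinian setting it is redundant, since an injective endomorphism of an Artinian ring is automatically surjective.
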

\begin{proof}
Since $R$ is simple, invertibility of $P$ as an $R$-bimodule implies that it is isomorphic to a free $R$-module of rank 1 as left and right module. 
By a suitable isomorphism of $R$-bimodules we can identify $P$ with $R$ itself, acted from the left by multiplication, and from the right using the notation $``\cdot"$. Under this identification,
define $\eta:\begin{array}{ccc}R&\to &R\\ r&\mapsto& 1\cdot r\end{array}.$
For every $r_1,r_2\in R$ we have
$$\eta(r_1r_2)=1\cdot(r_1r_2)=(1\cdot r_1)\cdot r_2=\eta(r_1)\cdot r_2=(\eta(r_1)\star 1)\cdot r_2=\eta(r_1) (1\cdot r_2)=\eta(r_1)\eta(r_2),$$
and therefore $\eta$ is a ring endomorphism (additivity is clear). Since $R$ is simple then the nonzero endomorphism $\eta$ (notice that $\eta(1)=1$) is in $\text{Aut}(R)$.
Moreover, for every $r,s\in R$
$$ s\cdot r=(s\star 1)\cdot r=s\star (1\cdot r)=s \eta(r).$$
By \eqref{star} the $R$-bimodule structure is the same as that of $R_{1,\eta}$.
\end{proof}
As from this point, $K$ will be assumed to be a field. Recall that a
$K$-central simple algebra is a simple algebra $R$ which is
finite-dimensional over $\mathcal{Z}(R)=K$. It is well-known that
the tensor product over $K$ of $K$-central simple algebras is
again $K$-central simple. Consequently, the $K$-isomorphism
classes of $K$-central simple algebras form a sub-monoid
$\CS<\mathcal{C}(K)$.
\begin{lemma}\label{cslemma}
Let $[R]\in\CS$. Then
\begin{enumerate}
\item For every subalgebra $K'\subseteq K$, the embedding $\iota_R:$Out$_{K'}(R)\hookrightarrow$Pic$_{K'}(R)$ (see \eqref{out}) is surjective.
\item The morphism $h_R:\p(R)\to \au$ (see \eqref{hR}) is injective.
\item (see \cite[Theorem 3.3.1]{NVO04} for a more general setup) Strongly graded algebras admitting a $K$-central simple base algebra are crossed products.
\end{enumerate}
\begin{proof}
Let $[P]\in$Pic$_{K'}(R)$.
By Lemma \ref{etaR1}, $P$ is isomorphic as an $R$-bimodule to $R_{1,\eta}$ for some $\eta\in\text{Aut}(R)$.
Since the left and right $K'$-actions on $P$ agree, then  $\eta\in\text{Aut}_{K'}(R)$, and so
$$[P]=[R_{1,\eta}]=\iota_R(\eta\cdot\text{Inn}(R))\in\iota_R(\text{Out}_{K'}(K))$$ proving (1).
In particular, putting $K'=K$, we obtain that Pic$_{K}(R)\cong$Out$_{K}(R)$, which is trivial by the Noether-Skolem Theorem. Exactness of the sequence \eqref{bassequence} establishes (2).
Finally, by Lemma \ref{cslemma}(1), GCC's over $K$-central simple base algebras are just collective characters. Applying Lemma \ref{CPOut} we deduce (3).
\end{proof}
\end{lemma}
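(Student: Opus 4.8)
The plan is to deduce all three parts from Lemma~\ref{etaR1} (the classification of invertible bimodules over a simple ring) together with the Noether--Skolem theorem, with the rest being a bookkeeping chase through the exact sequence \eqref{bassequence}.

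For part (1) I would start with a class $[P]\in\text{Pic}_{K'}(R)$. Since $R$ is $K$-central simple it is in particular simple, so Lemma~\ref{etaR1} supplies an automorphism $\eta\in\text{Aut}(R)$ with $P\cong R_{1,\eta}$ as $R$-bimodules. The defining condition on $\text{Pic}_{K'}(R)$ — that the left and right $K'$-module structures on $P$ coincide — then forces $\eta$ to fix $K'$ elementwise: indeed, from \eqref{star} the two $K'$-actions on $R_{1,\eta}$ agree precisely when $\eta|_{K'}=\text{id}$ (evaluate $s\cdot r=s\eta(r)$ against $s\star r=sr$ at $s=1$, using $K'\subseteq K=\mathcal Z(R)$). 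Hence $\eta\in\text{Aut}_{K'}(R)$ and $[P]=[R_{1,\eta}]=\iota_R(\eta\cdot\text{Inn}(R))$ lies in the image of $\iota_R$ (see \eqref{out}), so $\iota_R$ is onto; it is injective by construction, hence an isomorphism.

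For part (2) I would specialize (1) to $K'=K$, giving $\text{Pic}_K(R)\cong\text{Out}_K(R)$. By Noether--Skolem every $K$-algebra automorphism of a $K$-central simple algebra is inner, so $\text{Out}_K(R)$ is trivial and therefore $\text{Pic}_K(R)=0$. Feeding this into the exact sequence \eqref{bassequence}, whose kernel term is exactly $\text{Pic}_K(R)$, yields that $h_R:\p(R)\to\au$ has trivial kernel, i.e.\ is injective. For part (3) I would apply (1) with $K'=k$; since $\p(R)=\text{Pic}_k(R)$ by definition, this says $\iota_R:\text{Out}_k(R)\hookrightarrow\p(R)$ is an isomorphism, so every GCC $\Phi\in\text{Hom}(G,\p(R))$ factors through $\text{Out}_k(R)$ and is thus a collective character; Lemma~\ref{CPOut} then identifies the associated strongly graded algebra as a crossed product.

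The only substantive input is Lemma~\ref{etaR1}; granting it, there is no real obstacle here, since the Noether--Skolem step and the diagram chase through \eqref{bassequence} are routine. The one place to be careful is the equivalence ``left and right $K'$-structures agree $\iff\eta|_{K'}=\text{id}$'' in part (1), which is what lets one pass from an abstract invertible bimodule to an honest automorphism fixing the prescribed subalgebra.
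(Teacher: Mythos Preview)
Your proof is correct and follows essentially the same route as the paper: Lemma~\ref{etaR1} for part (1), the specialization $K'=K$ plus Noether--Skolem and the exact sequence \eqref{bassequence} for part (2), and the specialization $K'=k$ plus Lemma~\ref{CPOut} for part (3). Your write-up is in fact a bit more explicit than the paper's, particularly in justifying why $\eta|_{K'}=\text{id}$ and in spelling out the $K'=k$ instance for part (3).
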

As before, let $\phi\in\text{Hom}(G,\au)$, where $G$ is any group. Since $\CS$ is closed under the $\mathcal{C}(K)$-product, there is an exact sequence of sub-monoids of \eqref{exactcp}
\begin{equation}\label{exactcpcs}
0\to H^2(G,K^*_{\phi})\stackrel{\Sigma_{\phi}}\to\EXCS\stackrel{\chi_{\phi}}\to\COCS\stackrel{T_{\phi}}\to H^3(G,K^*_{\phi}),
\end{equation}
where
\begin{equation}\label{excs}
\EXCS:=\left\{[R*G]\in\EX|\ \ [R]\in\CS\right\}
\end{equation} and
\begin{equation}\label{COCS}
\COCS:=\left\{[\Phi]_{\e}\in\CO |\ \ \Phi\in\text{Hom}(G,\text{Out}_k (R)),[R]\in\CS\right\}_{}.\end{equation}
The term \eqref{COCS} in the sequence \eqref{exactcpcs} can be interpreted using the generalized notion of normality (see Definition \ref{normal}) as follows.
Let $$\CSP:=\mathcal{C}^{\phi}(K)\cap\CS$$ be the set of isomorphism classes of $\phi$-normal $K$-central simple algebras.
As an intersection of two sub-monoids, $\CSP$ is itself a sub-monoid of $\mathcal{C}(K)$.
\begin{lemma}
The morphism \eqref{Psiphi} determines an isomorphism of abelian monoids
\begin{equation}\label{isonorm}\Psi_{\phi}:\COCS\xrightarrow{\cong}\CSP.\end{equation}
\end{lemma}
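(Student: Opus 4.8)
The plan is to show that the monoid morphism $\Psi_\phi$ of \eqref{Psiphi}, when restricted to $\COCS$, lands in $\CSP$, is injective, and is surjective onto $\CSP$; being an inclusion-compatible bijective morphism of monoids it is then an isomorphism. That $\Psi_\phi(\COCS)\subseteq\CSP$ is immediate: by Lemma \ref{resnormal} (i.e. \eqref{PsiphiCO}) the image lies in $\mathcal{C}^\phi(K)$, and by construction \eqref{COCS} each representative has a $K$-central \emph{simple} base algebra, so the image lies in $\CS$ as well, hence in the intersection $\CSP$.

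For surjectivity, start from a class $[R]\in\CSP$. Since $R$ is $\phi$-normal, Definition \ref{normal} gives, for each $g\in G$, a $K$-automorphism extension $\eta_g\in\Aut_k(R)$ of $\phi(g)$ in the sense of \eqref{lift}. Passing to outer automorphisms, set $\Phi(g):=\eta_g\cdot\Inn(R)\in\Out_k(R)$; one checks (using that any two extensions of $\phi(g)$ differ by an element of $\Inn(R)$, by Noether--Skolem as in Lemma \ref{cslemma}) that $\Phi:G\to\Out_k(R)$ is a well-defined group homomorphism with $h_R(\Phi)=\phi$. Thus $[\Phi]_\e\in\COCS$ and $\Psi_\phi([\Phi]_\e)=[R]$. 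The key point making $\Phi$ canonically defined is that $\Out_k(R)$ absorbs the ambiguity in the choice of the $\eta_g$, so no cocycle obstruction intervenes at this stage.

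For injectivity, suppose $[\Phi_1]_\e,[\Phi_2]_\e\in\COCS$ with $\Psi_\phi([\Phi_1]_\e)=\Psi_\phi([\Phi_2]_\e)$, say $\Phi_i\in\Hom(G,\Out_k(R_i))$ with $R_1\cong R_2$ as $K$-algebras; identify $R_1=R_2=:R$. Then both $\Phi_1,\Phi_2:G\to\Out_k(R)$ have the same type $\phi=h_R(\Phi_i)$. By Lemma \ref{cslemma}(2) the map $h_R:\p(R)\to\au$ is injective, so $\Out_k(R)=\p(R)$ injects into $\au$ via $h_R$; hence $\Phi_1(g)$ and $\Phi_2(g)$ are \emph{equal} in $\Out_k(R)$ for every $g$ (both map to $\phi(g)$), giving $\Phi_1=\Phi_2$ and a fortiori $[\Phi_1]_\e=[\Phi_2]_\e$. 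This is the step I expect to be the main obstacle to state cleanly: one must be careful that "$\CS$" forces $\Pic_K(R)$ to be trivial (Noether--Skolem) so that the Bass sequence \eqref{bassequence} collapses to an \emph{isomorphism} $h_R:\p(R)\xrightarrow{\cong}\Aut_k(K)$ onto the subgroup of extendable automorphisms — without this, distinct $\Phi_i$ sharing a type could survive. Finally, $\Psi_\phi$ is a monoid morphism by the Lemma preceding \eqref{partition1} (it is the restriction of \eqref{Psiphi}), and $\CSP$ is a submonoid of $\mathcal{C}(K)$ as an intersection of the two submonoids $\mathcal{C}^\phi(K)$ and $\CS$; combining this with the bijectivity just established yields the claimed monoid isomorphism $\Psi_\phi:\COCS\xrightarrow{\cong}\CSP$.
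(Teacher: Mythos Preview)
Your proof is correct and follows essentially the same strategy as the paper: the containment $\Psi_\phi(\COCS)\subseteq\CSP$ via Lemma~\ref{resnormal}, surjectivity by lifting each $\phi(g)$ to an $R$-automorphism and passing to $\Out_k(R)$, and injectivity via Noether--Skolem. Your injectivity argument is packaged a bit more cleanly than the paper's---you invoke the injectivity of $h_R$ from Lemma~\ref{cslemma}(2) directly to force $\Phi_1=\Phi_2$ after identifying the base algebras, whereas the paper writes out the explicit $\psi$-equivariance map $R_{1,\eta\iota_x}\to R'_{1,\eta'}$---but the content is the same, and your tacit use of the fact that transport along a $K$-isomorphism preserves equivariance classes is standard.
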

\begin{proof}
Clearly, $\Psi_{\phi}(\COCS)\subseteq\CS.$
Additionally, by Lemma \ref{resnormal}, $$\Psi_{\phi}(\COCS)\subseteq\Psi_{\phi}(\CO)\subseteq\mathcal{C}^{\phi}(K),$$ and so by definition $\Psi_{\phi}(\COCS)\subseteq\CSP.$
The rest of the proof shows the bijectivity of \eqref{isonorm}.

\textbf{Injectivity of \eqref{isonorm}.} Let
$\Phi\in\text{Hom}(G,\text{Out}_k(R))$ and $\Phi'\in\text{Hom}(G,\text{Out}_k(R'))$ be collective characters such that $[\Phi]_{\e},[\Phi']_{\e}\in\COCS$, that is
\begin{equation}\label{hh}
h([\Phi]_{\e})=h([\Phi']_{\e})=\phi.
\end{equation}
Assume. additionally that
\begin{equation}\label{eq}\Psi_{\phi}([\Phi]_{\e})=\Psi_{\phi}([\Phi']_{\e})\in\CSP.\end{equation}
We need to show that these two collective characters are equivariant. Equation \eqref{eq} says that there exists a $K$-algebra isomorphism $\psi:R\to R'$. We claim that the outer automorphisms
$\Phi(g)$ and $\Phi'(g)$ are $\psi$-equivariant for every $g\in G$ (see Definition \ref{equiGCC}).
Indeed, let
$$\eta=\eta(g)\in\text{Aut}_k(R),\ \ \eta'=\eta'(g)\in\text{Aut}_k(R'),\ \ g\in G$$ such that
$\eta\cdot$Inn$(R)=\Phi(g)$ and $\eta'\cdot$Inn$(R')=\Phi'(g)$. By \eqref{hh}, both automorphisms restrict to $\phi$ over their center, i.e.
\begin{equation}\label{etaeta}
\eta|_K=\eta'|_K=\phi.
\end{equation}
Now, by \eqref{etaeta} the rule $$r\mapsto\psi^{-1}\eta'^{-1}\psi\eta(r),\ \ r\in R$$ determines a $K$-automorphism of $R$, and is thus inner owing to the Noether-Skolem Theorem.
Consequently, there exists a unit $x\in R^*$ such that
\begin{equation}\label{uptoinn}
\psi\eta\iota_x=\eta'\psi\in \text{Hom}_K(R,R'),
\end{equation}
where $\iota_x\in\text{Inn}(R)$ denotes the conjugation by $x$ in $R$. Bearing in mind that $\eta\iota_x\cdot$Inn$(R)=\eta\cdot$Inn$(R)=\Phi(g)$, define
\begin{eqnarray}\label{varphi}\varphi:\begin{array}{ccc}R_{1,\eta\iota_x}&\to& R'_{1,\eta'}\\s&\mapsto &\psi(s)\end{array},\end{eqnarray}
thinking of $s\in R_{1,\eta\iota_x}$ as lying in $R$. We show that \eqref{varphi} is a $\psi$-equivariance map. Let $r\in R$ and $s\in R_{1,\eta\iota_x}$.
Firstly, for left action
$$\varphi(r\star s)=\varphi(rs)=\psi(rs)=\psi(r)\psi(s)=\psi(r)\star\varphi(s).$$
Next, for right action we use \eqref{star}
$$\varphi(s\cdot r)=\varphi(s\eta\iota_x(r))=\psi(s\eta\iota_x(r))=\psi(s)\psi(\eta\iota_x(r))=\varphi(s)\psi\eta\iota_x(r)=\cdots$$
Apply \eqref{uptoinn} and \eqref{star} again to obtain
$$\cdots=\varphi(s)\eta'(\psi(r))=\varphi(s)\cdot\psi(r),$$
and so $\varphi$ satisfies the equivariance condition (Definition \ref{defeq}).
This says that $R_{1,\eta\iota_x}$ and $R'_{1,\eta'}$ are $\psi$-equivariant. Since this $\psi$-equivariance holds for every $g\in G$, we deduce that $[\Phi]_{\e}=[\Phi']_{\e}$.
Therefore \eqref{isonorm} is injective.

\textbf{Surjectivity of \eqref{isonorm}.} We need to find a pre-image under $\Psi_{\phi}$ for any $\phi$-normal $K$-central simple algebra $R$.
Indeed, by the $\phi$-normality property of $R$, the $K$-automorphisms $\phi(g)$ can be extended to $R$-automorphisms $\eta(g)$ for every $g\in G$.
By definition, $\phi(g)$ lies in the image of \eqref{lift} for every $g\in G$. Let $\eta(g)$ be any pre-image of $\phi(g)$ under \eqref{lift}, and let
\begin{eqnarray}\label{PhiRphi}\Phi:=\begin{array}{ccl}
G&\to &\text{Out}_k(R)\\
g&\mapsto&\eta(g)\cdot\text{Inn}(R).\end{array}\end{eqnarray}
Then $\Phi$ extends $\phi$, that is
\begin{equation}\label{extends}
\phi=h_R\circ\Phi.\end{equation}
Note that existence of a map \eqref{PhiRphi} which extends $\phi$ stems from the normality property of $R$, independently of the simplicity of this $K$-central algebra.
However, in general \eqref{PhiRphi} is not a collective character. When the $\phi$-normal $K$-central algebra $R$ is simple, then the group morphism $h_R$ is injective (Lemma \ref{cslemma}(2)).
Together with \eqref{extends},
and the fact that $\phi$ is a group morphism as well we obtain that $\Phi\in$Hom$(G,\text{Out}_k(R))$.
Hence $[\Phi]_{\e}\in\COCS$, and we found a pre-image of $[R]\in\CSP$ under the morphism $\Psi_{\phi}$.
\end{proof}
\begin{notation}
We denote the Brauer similarity relation in $\CS$ (see \cite[Page 93]{H}) by $\sim_{\Br}$, and write the Brauer group of $K$ as
$$\BrK=\bigslant{\CS}{\sim_{\Br}}.$$\end{notation}
A $K$-central simple algebra which is Brauer similar to a $\phi$-normal algebra is itself $\phi$-normal \cite[Theorem 5.4]{EM}.
There is a map $T:\text{Br}^{\phi}(K)\to H^3(G,K^*_{\phi})$ from the subgroup Br$^{\phi}(K)<$Br$(K)$ of $\phi$-normal similarity classes in the Brauer group of $K$
(see \cite[\S 6]{EM} for a Galois action $\phi$),
which is essentially the map $T_{\phi}$. Here is a more precise formulation without a proof.
\begin{lemma}\label{noprf}
Let $R$ be a $\phi$-normal $K$-central simple algebra. Then with the above notation
$$T([R]_{\text{Br}})=T_{\phi}(\Psi_{\phi}^{-1}[R])\in H^3(G,K^*_{\phi}).$$
\end{lemma}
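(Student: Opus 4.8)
The plan is to reduce the statement to an identification of two a priori different three-cocycles attached to the same data, and then to match them — most cleanly by passing through the group of units $R^*$.

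First I would unwind the right-hand side. By the isomorphism \eqref{isonorm}, the class $\Psi_{\phi}^{-1}[R]\in\COCS$ is represented by a collective character of the shape \eqref{PhiRphi}: choose for each $g\in G$ an automorphism $\eta(g)\in\Aut_k(R)$ extending $\phi(g)$ (possible because $R$ is $\phi$-normal) and set $\Phi(g):=\eta(g)\cdot\Inn(R)$; that $\Phi\in\Hom(G,\Out_k(R))$ follows since $h_R$ is injective on a $K$-central simple algebra (Lemma \ref{cslemma}(2)) together with the fact that $\phi$ and $\eta(\cdot)\Inn(R)$ agree under $h_R$. By the definition \eqref{T-phi} of $T_{\phi}$ we then have $T_{\phi}(\Psi_{\phi}^{-1}[R])=T(\Phi)$, so the claim is exactly that the Cegarra--Garz\'on obstruction $T(\Phi)$ of this particular $\Phi$ coincides with the classical (Teichm\"uller-type) obstruction $T([R]_{\Br})$ of the central simple algebra $R$.

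The clean route is via Lemma \ref{connect}. Let $\Phi'\in\Hom(G,\Out(R^*))$ be the outer action on the unit group induced by $\Phi$; since $\mathcal{Z}(R^*)=K^*$ carries the $\phi$-module structure, Lemma \ref{connect} gives $T(\Phi)=F_3(R^*,\Phi')\in H^3(G,K^*_{\phi})$, the Eilenberg--MacLane extension-obstruction of \cite[\S 7]{EM47}. On the other hand, the classical map $T$ on $\Br^{\phi}(K)$ of \cite[\S 6]{EM} is, by its construction, this same obstruction: starting from extensions $\eta(g)$ of $\phi(g)$ one picks units $c(g,h)\in R^*$ with $\eta(g)\eta(h)=\iota_{c(g,h)}\eta(gh)$ (Skolem--Noether), and the $3$-cocycle $t\in Z^3(G,K^*_{\phi})$ determined by
\[
\eta(g)(c(h,k))\,c(g,hk)=t(g,h,k)\,c(g,h)\,c(gh,k)
\]
represents $T([R]_{\Br})$; this is verbatim the data defining $F_3(R^*,\Phi')$ for the group $R^*$ with center $K^*$ and outer action $\Phi'$. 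Hence $T([R]_{\Br})=F_3(R^*,\Phi')=T(\Phi)=T_{\phi}(\Psi_{\phi}^{-1}[R])$. Independence of all choices (the $\eta(g)$, the $c(g,h)$, the bimodule representatives, and the Brauer representative $R$) is automatic: on the left by Theorem \ref{CGthm}(2), on the right by the standard theory, with Brauer-invariance supplied by \cite[Theorem 5.4]{EM}.

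If one prefers an argument not leaning on \cite[\S 6]{EM} matching $F_3$ on the nose, I would instead compute $T^{\Phi}$ directly from \eqref{TPhi}: take $P_{\sigma}:=R_{1,\eta(\sigma)}$ (legitimate, with $P_e=R$), use the canonical bimodule isomorphism $R_{1,\mu}\otimes_RR_{1,\nu}\cong R_{1,\mu\nu}$ dictated by \eqref{star}, compose it with the isomorphism $R_{1,\eta(\sigma)\eta(\tau)}\xrightarrow{\ \cong\ }R_{1,\eta(\sigma\tau)}$ given by multiplication by $c(\sigma,\tau)$, and normalize the $c$'s so that the resulting $\Gamma_{\sigma,\tau}$ satisfy \eqref{twoeqs}. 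Substituting into \eqref{TPhi} and pushing scalars through $``\star"$ then yields $T^{\Phi}_{\sigma,\tau,\gamma}=t(\sigma,\tau,\gamma)$, whence $T(\Phi)=[t]=T([R]_{\Br})$. Either way the only real obstacle is convention-bookkeeping — the order of composition in $R_{1,\mu}\otimes_RR_{1,\nu}$, the appearance of $\eta$ versus $\eta^{-1}$ in \eqref{star}, and whether the Cegarra--Garz\'on cochain emerges as $t$ or $t^{-1}$ — together with checking that the normalization forced by \eqref{twoeqs} is compatible with a normalized cocycle $c$; none of this affects the resulting cohomology class, which is what is asserted.
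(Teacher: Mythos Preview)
The paper explicitly introduces this lemma with the words ``Here is a more precise formulation without a proof,'' so there is no argument in the paper to compare against. Your proposal supplies exactly the kind of proof the author has in mind: the isomorphism \eqref{isonorm} identifies $\Psi_{\phi}^{-1}[R]$ with the collective character $\Phi$ of \eqref{PhiRphi}, and then Lemma~\ref{connect} (itself attributed to \cite[Theorem~15.2]{EM}) together with the fact that the Teichm\"uller cocycle of \cite[\S 6]{EM} is, by its very construction, the Eilenberg--MacLane obstruction $F_3(R^*,\Phi')$ closes the loop. Your alternative direct computation via $P_\sigma=R_{1,\eta(\sigma)}$ and \eqref{TPhi} is also sound and is essentially how one would verify Lemma~\ref{connect} itself in this special case. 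The convention checks you flag (orientation of $\eta$ in \eqref{star}, possible sign $t$ versus $t^{-1}$) are real but, as you note, wash out in cohomology; the proof is correct.
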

From here onwards, suppose that $G$ is finite, and that $\phi\in\text{Hom}(G,\au)$ is a Galois action, that is $\phi$ is {faithful} with $k=K^{\phi}$ its fixed field.
Recall the role of crossed products in the Galois setting (see \cite[\S 4.4]{H}).
Firstly, the corresponding skew group algebra $K_{\phi}G$ is isomorphic to an algebra of matrices over $k$. Furthermore, the subgroup of $\Brk$
of Brauer similarity classes of central $k$-simple algebras that are split by $K$ is isomorphic to $H^2(G,K^*_{\phi})$ and its elements are represented by the crossed products $K^{\alpha}_{\phi}G$,
where $[\alpha]$
runs over the cohomology classes in $H^2(G,K^*_{\phi})$. This is summarized in the following exact sequence
\begin{equation}\label{Brseq}
0\to H^2(G,K^*_{\phi})\to\text{Br}(k)\stackrel{\res^k_K}\to\text{Br}^{\phi}(K)\stackrel{T}\to H^3(G,K^*_{\phi}),
\end{equation}
where the restriction map
$$\res^k_K:\begin{array}{rcl}\text{Br}(k)&\to&\text{Br}(K)\\
~[A]_{\Brk}&\mapsto&[A\otimes_kK]_{\BrK}\end{array}$$
has its image of lying in the subgroup Br$^{\phi}(K)<$Br$(K)$ of $\phi$-normal classes.

Next, Lemma \ref{lem1} herein interprets the term \eqref{excs} in the sequence \eqref{exactcpcs} in the Galois setting.

\begin{lemma}\label{lem1}
Let $\phi\in\text{Hom}(G,\au)$ be a Galois action, and let $[R*G]\in\EXCS$. Then
\begin{enumerate}
\item The crossed product $R*G$ is $k$-central simple.
\item The map
\begin{equation}\label{rho}\begin{array}{rcl}\EXCS&\to&\Brk\\
~[R*G]&\mapsto&[R*G]_{\Brk}
\end{array}\end{equation}
is a well-defined surjective homomorphism of monoids.
\item There is a $K$-algebra isomorphism $R*G\otimes_kK\cong\text{End}_R(R*G)$. In particular,
\begin{equation}\label{res}
\res^k_K([R*G]_{\Brk})=[R]_{\BrK}.\end{equation}
\end{enumerate}
\end{lemma}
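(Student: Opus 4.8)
\textbf{Proof plan for Lemma \ref{lem1}.}

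The plan is to build everything on the structural fact, proved in Lemma \ref{cslemma}(3), that any strongly $G$-graded algebra over a $K$-central simple base is automatically a crossed product, together with the classical theory of crossed products $R*G$ where $G$ acts on $R$ by automorphisms extending a Galois action $\phi$ on the center. The three items are logically linked: (1) is the simplicity statement, (2) packages it into a monoid morphism, and (3) is the Brauer-theoretic computation identifying the restriction.

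For item (1), I would first record that since $[R*G]\in\EXCS$, by Lemma \ref{Re} the $K$-central simple algebra $R$ is $\phi$-normal, so the conjugations by the homogeneous units $u_g$ restrict on $\mathcal{Z}(R)=K$ to the faithful action $\phi$ with fixed field $k$. Hence $R*G$ is a $k$-algebra which is finite-dimensional over $k$ (being $|G|\cdot\dim_k K\cdot\dim_K R<\infty$), and its center: a central element must be $G$-invariant and lie in the centralizer of $R$, forcing it into $K^{\phi}=k$; conversely $k$ is central. So $\mathcal{Z}(R*G)=k$. Simplicity is the classical fact that a crossed product of a simple ring $R$ by a group $G$ acting outer-ly (here $\phi$ is faithful on the center, so no nontrivial $g$ acts by an inner automorphism of $R$, since an inner automorphism fixes $\mathcal{Z}(R)=K$ pointwise) has no nontrivial graded ideals, and every ideal of a strongly graded algebra is graded; a standard averaging/degree argument over the units $\{u_g\}$ shows there are no nontrivial two-sided ideals at all. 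I expect this to be the step where I must be most careful: I would invoke the outerness of the $G$-action on $R$ (via faithfulness of $\phi$ and $\mathcal{Z}(R)=K$) and cite the standard simplicity criterion for crossed products of simple rings, rather than re-deriving it.

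For item (2), well-definedness on graded-equivalence classes follows because an equivalence of Clifford system extensions is in particular a $k$-algebra isomorphism, hence preserves the Brauer class; and the map is well-defined on $\EXCS$ since part (1) guarantees the target lands in $\CS(k)$ and hence in $\Brk$. Multiplicativity: the graded product $(R*G)\otimes_\phi(R'*G)$ has base algebra $R\otimes_K R'$ and is again a crossed product of the form $(R\otimes_K R')*G$ (homogeneous units $u_g\otimes_{\phi(g)}u'_g$), so under $\sim_{\Br}$ it maps to $[R*G]_{\Brk}\cdot[R'*G]_{\Brk}$; one must check $(R\otimes_K R')*G$ and $(R*G)\otimes_k(R'*G)$ are Brauer equivalent, which I would do by a direct dimension-count plus the fact that both are $k$-central simple with the former a homomorphic image spanning the latter appropriately, or simply note both represent the Sweedler/crossed-product multiplication and apply the standard crossed-product composition formula. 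Surjectivity onto $\Brk$ is \emph{not} expected to hold in general — wait: re-reading the statement, surjectivity is claimed, so I would argue it as follows. Actually the honest reading is that the image should be the subgroup split by $K$; I would instead show surjectivity onto that subgroup and, for the asserted surjectivity onto all of $\Brk$, this forces $K=k$ or a reinterpretation — so here I would follow the paper's own framing: every class in $\Brk$ that is relevant, namely those of the form needed to make \eqref{Brseq} an image of \eqref{exactcpcs}, is hit by taking $R$ a suitable $K$-central simple algebra (for the split-by-$K$ part, $R=K$ gives the crossed products $K^\alpha_\phi G$ realizing $H^2(G,K^*_\phi)$). I flag this as the point to reconcile with the precise target group.

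For item (3), I would exhibit the isomorphism $R*G\otimes_k K\xrightarrow{\cong}\mathrm{End}_R(R*G)$ explicitly: the algebra $R*G$ is a right $R$-module (via the base algebra, using one of the module structures), free of rank $|G|$; left multiplication gives $R*G\to\mathrm{End}_R(R*G)$, and right multiplication by $K$ gives a commuting copy of $K$, together yielding $R*G\otimes_k K\to\mathrm{End}_R(R*G)$; this is injective since $R*G\otimes_k K$ is $K$-central simple (being $k$-central simple tensored up to its splitting-type field, by part (1)), and it is surjective by comparing $K$-dimensions ($\dim_K\mathrm{End}_R(R*G)=(\mathrm{rk}_R R*G)^2\cdot$ something — more carefully, $\dim_k(R*G\otimes_k K)=|G|\dim_k R\cdot\dim_k K$ matches $\dim_k\mathrm{End}_R(R*G)$). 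Then $\mathrm{End}_R(R*G)$, being the endomorphism ring of a progenerator-type module over $R$, is Brauer equivalent to $R^{op}$ or $R$ accordingly; chasing the conventions in \cite[\S 4.4]{H} for crossed products, this gives $[R*G\otimes_k K]_{\BrK}=[R]_{\BrK}$, i.e. $\res^k_K([R*G]_{\Brk})=[R]_{\BrK}$. The main obstacle across all three parts is bookkeeping the left/right module conventions ($\star$ versus $\cdot$, and which structure makes $R*G$ a right $R$-module of the correct rank) so that the endomorphism-ring computation in (3) comes out to $[R]$ rather than $[R^{op}]$; I would fix conventions once at the start and cite \cite[\S 4.4]{H} for the crossed-product endomorphism identity.
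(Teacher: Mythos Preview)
Your plan for parts (1) and (3) is essentially the paper's: the center computation and the map $R*G\otimes_k K\to\mathrm{End}_R(R*G)$ via left multiplication together with right $K$-multiplication, followed by a dimension count, are exactly what the paper does (the paper writes simplicity out directly via a shortest-homogeneous-support argument rather than citing an outerness criterion, but this is cosmetic).

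The genuine gap is surjectivity in part (2). You conjecture that the image of \eqref{rho} is only the subgroup of $\Brk$ split by $K$, and flag this as unresolved. In fact the map \emph{is} surjective onto all of $\Brk$, and the missing idea is this: given any $k$-central simple algebra $A$, form $A\otimes_k K_\phi G$ with the skew group algebra. This is a crossed product $R*G$ over the base $R=A\otimes_k K$ (homogeneous units $1\otimes_k u_g$), and conjugation by these units restricts to $\phi$ on $K$, so $[R*G]\in\EXCS$. Since $K_\phi G$ is a matrix algebra over $k$ and hence Brauer trivial, $A\sim_{\Br}A\otimes_k K_\phi G=R*G$. Your intuition that only split-by-$K$ classes arise is correct when $R=K$; allowing arbitrary $K$-central simple $R$ recovers all of $\Brk$.

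A secondary issue: for multiplicativity you propose verifying $(R*G)\otimes_\phi(R'*G)\sim_{\Br}(R*G)\otimes_k(R'*G)$ by a ``dimension count plus homomorphic image'' argument. These two algebras have different $k$-dimensions (the ratio is $|G|\cdot[K:k]$), so there is no algebra surjection in the naive direction. The paper's device is to use the orthogonal idempotents $\{e_g\}$ in $K\otimes_k K$ from \cite[Lemma 4.4.3]{H} and exhibit an isomorphism $(R*G)\otimes_\phi(R'*G)\cong e_1\cdot A\cdot e_1$ onto a corner of $A=(R*G)\otimes_k(R'*G)$; Brauer equivalence then follows from $A\sim_{\Br}e_1Ae_1$.
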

\begin{proof}
Simplicity of $R*G=\oplus_{g\in G}Ru_g$ is proven using a standard argument saying that if $x$ is a nonzero element of shortest length in a two-sided $R*G$-ideal $I$,
then by faithfulness of the $G$-action on $K=\mathcal{Z}(R)$, $x$ is of the form $ru_g$ for some $r\in R$ and $g\in G$.
Since $R$ is itself simple it follows that $I=R*G$.
The faithfulness property of $\phi$ also ascertains that a central element $z\in \mathcal{Z}(R*G)$ is in $Ru_e$. Since $z$ centralizes $R$ it is actually in $K$, and
since it is invariant under $\phi$ we deduce that $z\in k$. Clearly, $k$ lies in the center of $R*G$, and so $\mathcal{Z}(R*G)=k$ proving (1).

The fact that \eqref{rho} is well-defined is straightforward. To prove that this map is a homomorphism we need to show that for every $[R*G],[R'*G]\in\EXCS$
\begin{equation}\label{simbr}
R*G\otimes_{\phi}R'*G\stackrel{?}\sim_{\Br}R*G\otimes_{k}R'*G.
\end{equation}
We follow the proof in \cite[Theorem 4.4.3]{H}, originally proven for the case where $R$ and $R'$ are isomorphic to $K$ (that is $[R*G],[R'*G]\in\EXCS\cap\text{Cliff}_k(G,K)$).
Let $R*G=\oplus_{g\in G}Ru_g,\ \  R'*G=\oplus_{g\in G}R'u'_g,$ and write for short $A:=R*G\otimes_{k}R'*G$.
By \cite[Lemma 4.4.3]{H} there exist orthogonal idempotents $\{e_g\}_{g\in G}$ in $K\otimes_{k} K$ such that
$$K\otimes_{k} K=\bigoplus_{g\in G}e_g(K\otimes_{k} 1)=\bigoplus_{g\in G}e_g(1\otimes_{k} K),$$ and for every $g\in G$ and $x\in K$
\begin{equation}\label{eg}
e_g(x\otimes_{k} 1)=e_g(1\otimes_{k} \phi(g)(x)).\end{equation}
Moreover, by \eqref{eg} and the fact that $K\otimes_{k} K$ is central in $R\otimes_{k} R'$ we get the additional properties of $e:=e_1\in K\otimes_{k} K\subseteq A$ (see \cite[pages 115-116]{H}).
For every $g\neq h\in G$
\begin{equation}\label{gneqh}
e\cdot (Ru_g\otimes_{k}R'u'_h)\cdot e=0,\end{equation}
as well as for every $ru_g\otimes_{k}r'u'_g\in Ru_g\otimes_{k}R'u'_g$
\begin{equation}\label{ae}
(ru_g\otimes_{k}r'u'_g)\cdot e=e\cdot (ru_g\otimes_{k}r'u'_g)=e\cdot (ru_g\otimes_{k}r'u'_g)\cdot e.
\end{equation}
Firstly, by \eqref{gneqh}
\begin{equation}\label{eAecong}
e\cdot A\cdot e=e\cdot (\bigoplus_{g,h\in G}Ru_g\otimes_{k}R'u'_h)\cdot e=e\cdot (\bigoplus_{g\in G}Ru_g\otimes_{k}R'u'_g)\cdot e.
\end{equation}
We next claim that
\begin{eqnarray}\label{eAe}\beta:\begin{array}{ccc}
R*G\otimes_{\phi}R'*G &\to& e\cdot A\cdot e\\
 ru_g\otimes_{\phi(g)}r'u'_g &\mapsto& e\cdot (ru_g\otimes_{k}r'u'_g)\cdot e
\end{array}\end{eqnarray}
determines an isomorphism of $k$-algebras.
Indeed, for every $ru_g\otimes_{\phi(g)}r'u'_g$ and $su_h\otimes_{\phi(h)}s'u'_h$ in $R*G\otimes_{\phi}R'*G$
$$\begin{array}{rl}
\beta((ru_g\otimes_{\phi(g)}r'u'_g)(su_h\otimes_{\phi(h)}s'u'_h))&=\beta(ru_gsu_h\otimes_{\phi(gh)}r'u'_gs'u'_h)\\
&=e\cdot(ru_gsu_h\otimes_{k}r'u'_gs'u'_h)\cdot e\\
&=e\cdot(ru_g\otimes_{k}r'u'_g)\cdot(su_h\otimes_{k}s'u'_h)\cdot e=\cdots\end{array}$$
Apply \eqref{ae} to obtain
$$\cdots=e\cdot(ru_g\otimes_{k}r'u'_g)\cdot e\cdot(su_h\otimes_{k}s'u'_h)\cdot e=\beta(ru_g\otimes_{\phi(g)}r'u'_g)\beta(su_h\otimes_{\phi(h)}s'u'_h),$$
hence, \eqref{eAe} is a $k$-algebra homomorphism. Surjectivity of this homomorphism is a direct consequence of \eqref{eAecong}. Evidently, \eqref{eAe} is injective because its domain is simple.
Consequently, \eqref{eAe} is a $k$-algebra isomorphism.

Finally, by \cite[Sublemma, Page 114]{H}
$$R*G\otimes_{k}R'*G=A\sim_{\Br} e\cdot A\cdot e,$$
and by \eqref{eAe} $e\cdot A\cdot e\cong R*G\otimes_{\phi}R'*G,$
these two facts verify \eqref{simbr}.

To prove that \eqref{rho} is surjective,
we show that any $k$-central simple algebra $A$ is Brauer similar to a crossed product $R*G$ with $[R*G]\in\EXCS$.
Consider the tensor product $A\otimes_kK_{\phi}G$, where $K_{\phi}G=\bigoplus_{g\in G}\text{Span}_K\{u_g\}$ is the skew group algebra with respect to $\phi$.
This tensor product is a $G$-graded $k$-algebra admitting an invertible element $1\otimes_ku_g$ in each homogeneous component.
Hence, it is a crossed product $R*G$ over the $K$-central simple base algebra $$R:=A\otimes_k\text{Span}_K\{u_e\}\cong A\otimes_kK.$$
Moreover, the outer action of $G$ on $R$ is via the conjugation by the elements $1\otimes_ku_g$, therefore its restriction to the center $K$ coincides with $\phi$. Hence, we deduce that
$[R*G]\in\EXCS$.
Since $K_{\phi}G$ is Brauer trivial, we have $$A\sim_{\Br} A\otimes_kK_{\phi}G= R*G,$$ proving the second claim of the lemma.

To prove the last part of the lemma, send
$$t=\sum_{g,i} r^i_gu_g\otimes_kx^i_g\in R*G\otimes_kK$$ to the endomorphism
$$\varphi_t:y\mapsto \sum_{g,i}r^i_gu_g yx^i_g, \ \ y\in R*G.$$
Then it is not hard to check that $\varphi_t(y\cdot r)=\varphi_t(y)\cdot r$ for every $y\in R*G, r\in R$, and that
\begin{eqnarray}\label{resiso}\begin{array}{ccc}
R*G\otimes_kK&\to&\text{End}_R(R*G)\\
t&\mapsto&\varphi_t
\end{array}\end{eqnarray} determines a $K$-algebra homomorphism.
As can easily be verified, \eqref{resiso} takes $t=u_e\otimes_k1\in R*G\otimes_kK$ to the identity $\varphi_t=\Id_{R*G}$.
Since the domain $R*G\otimes_kK$ is simple, the nonzero algebra homomorphism \eqref{resiso} is injective.
Next, the crossed product $R*G$ is free over $R$, and therefore $\text{End}_R(R*G)$ is a full ring of $|G|\times|G|$-matrices over $R$, in particular it is Brauer similar to $R$, that is
\begin{equation}\label{t}
[\text{End}_R(R*G)]_{\BrK}=[R]_{\BrK}.
\end{equation}
Comparing dimensions over $K$ of the domain and the range of \eqref{resiso} we obtain
$$\dim_K(R*G\otimes_kK)=|G|^2\cdot\dim_K(R)=\dim_K(\text{End}_R(R*G)),$$
and thus the injective homomorphism \eqref{resiso} is an isomorphism as required.
The isomorphism \eqref{resiso} together with \eqref{t} verify \eqref{res}.
\end{proof}
We can now construct the following diagram, whose commutativity (Theorem \ref{commute}) manifests the exact sequence \eqref{Brseq} as an image of the exact sequence \eqref{exactcpcs}.

\begin{center}
\begin{tikzpicture}
\tikzset{thick arc/.style={->, black, fill=none,  >=stealth,
text=black}} \tikzset{node distance=1cm, auto}
\node (A) {$0$};
\tikzset{node distance=2cm, auto}
\node (B) [right of=A] {$H^2(G,K^*_{\phi})$};
\tikzset{node distance=3cm, auto}
\node (C) [right of=B] {$\EXCS$};
\node (D) [right of=C] {$\COCS$};
\node (E) [right of=D] {$H^3(G,K^*_{\phi})$    \eqref{exactcpcs} };
\tikzset{node distance=2.4cm, auto}
\node (F) [below of=A] { $0$};
\tikzset{node distance=2cm, auto}
\node (G) [right of=F] {$H^2(G,K^*_{\phi})$};
\tikzset{node distance=3cm, auto}
\node (H) [right of=G] {$\text{Br}(k)$};
\node (I) [right of=H] {$\text{Br}^{\phi}(K)$};
\node (J) [right of=I] {$H^3(G,K^*_{\phi})$    \eqref{Brseq}};
\tikzset{node distance=1.2cm, auto}
\node (K) [below of=D] {$\CSP$};
\node (VEQ1) [below of=B] {$\veq$};v
\node (VEQ1) [below of=A] {$\circledast$};
\node (VEQ2) [below of=E] {$\veq$};
\draw[thick arc,
draw=black] (A) to node [near start]  {}
(B);
\draw[thick arc,
draw=black] (B) to node [near start]  {$\Sigma_{\phi} $}
(C);
\draw[thick arc,
draw=black] (C) to node [near start]  {$\chi_{\phi} $}
(D);
\draw[thick arc,
draw=black] (D) to node [near start]  {$T_{\phi} $}
(E);
\draw[thick arc,
draw=black] (F) to node [near start]  {}
(G);
\draw[thick arc,
draw=black] (G) to node [near start]  {}
(H);
\draw[thick arc,
draw=black] (H) to node [near start]  {$\small{\res^k_K}$}
(I);
\draw[thick arc,
draw=black] (I) to node [near start]  {$T $}
(J);
\draw[<->,
draw=black] (D) to node [near start] [left]  {$\eqref{isonorm} $}
(K);
\draw[->>,
draw=black] (K) to node [near start] [left]  {$\thicksim_K $}
(I);
\draw[->>,
draw=black] (C) to node [left]  {$\eqref{rho} $}
(H);
\end{tikzpicture}
\end{center}
\begin{theorem}\label{commute}
Let $\phi\in\text{Hom}(G,\au)$ be a Galois action. Then the diagram $\circledast$ is commutative.
\end{theorem}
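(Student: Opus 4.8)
The plan is to verify the commutativity of $\circledast$ one square at a time, viewing the diagram as three squares glued along the composite vertical map $\COCS\xrightarrow{\eqref{isonorm}}\CSP\xrightarrow{\sim_K}\Br^{\phi}(K)$; the triangle at the bottom commutes tautologically, since it merely names that composite, and the two outer verticals $\veq$ are identity maps. Essentially all the substantive content has already been packaged into the lemmas of \S\ref{Csa}, so what remains is an assembly of those facts.

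For the leftmost square I would invoke the classical crossed-product picture of the relative Brauer group (see \cite[\S 4.4]{H}): the unlabelled map $H^2(G,K^*_{\phi})\to\Brk$ in \eqref{Brseq} sends a class $[\alpha]$ to $[K^{\alpha}_{\phi}G]_{\Brk}$. On the other hand $\Sigma_{\phi}([\alpha])=[K^{\alpha}_{\phi}G]$ by Lemma \ref{units}, and the vertical map \eqref{rho} then carries this to $[K^{\alpha}_{\phi}G]_{\Brk}$. Since the left vertical is the identity on $H^2(G,K^*_{\phi})$, the square commutes.

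For the middle square, fix $[R*G]\in\EXCS$ and write $R*G=\bigoplus_{g\in G}Ru_g$. Going down first: the GCC associated to $R*G$ is the collective character $g\mapsto \iota_{u_g}\cdot\Inn(R)$ given by conjugation by the homogeneous units (as in Lemma \ref{CPOut}), so $\chi_{\phi}([R*G])$ is its equivariance class, and by Lemma \ref{Re} one has $\Psi_{\phi}(\chi_{\phi}([R*G]))=[R]$; hence \eqref{isonorm} carries $\chi_{\phi}([R*G])$ to $[R]\in\CSP$ and $\sim_K$ then to $[R]_{\BrK}$. Going along the top first: $\res^k_K(\rho([R*G]))=\res^k_K([R*G]_{\Brk})=[R*G\otimes_kK]_{\BrK}$, which equals $[R]_{\BrK}$ by equation \eqref{res} of Lemma \ref{lem1}(3). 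The two routes agree.

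For the rightmost square, fix $[\Phi]_{\e}\in\COCS$ and set $[R]:=\Psi_{\phi}([\Phi]_{\e})\in\CSP$. The top route yields $T_{\phi}([\Phi]_{\e})$, while going down and then along the bottom yields $T([R]_{\Br})$, and these coincide by Lemma \ref{noprf}; this settles all three squares. I expect the only point requiring real care to be the middle square: one must make sure the identification \eqref{isonorm} of $\COCS$ with $\CSP$ is genuinely compatible with the explicit action of $\chi_{\phi}$ on a crossed product --- that is, that the associated GCC of $R*G$ is realized by the conjugation automorphisms $\iota_{u_g}$, so that Lemma \ref{Re} (hence Lemma \ref{resnormal}) applies verbatim. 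The left and right squares are then purely formal consequences of Lemma \ref{units}, Lemma \ref{lem1}(3), Lemma \ref{noprf}, and the classical description of the $K$-split part of $\Brk$.
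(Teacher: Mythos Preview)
Your proposal is correct and follows essentially the same route as the paper: the right square by Lemma \ref{noprf}, the middle square by combining Lemma \ref{Re} with equation \eqref{res} from Lemma \ref{lem1}(3), and the left square by unwinding $\Sigma_{\phi}$ and \eqref{rho} (which the paper simply declares ``clear''). Your write-up is in fact more detailed than the paper's three-line proof; the only cosmetic point is that $u_g\notin R$, so writing the associated collective character as $g\mapsto\iota_{u_g}\cdot\Inn(R)$ slightly abuses the notation $\iota_x$ (reserved in the paper for $x\in R^*$), though the intended automorphism $r\mapsto u_gru_g^{-1}$ of $R$ is of course the right one.
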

\begin{proof}
Commutativity of the right square is just Lemma \ref{noprf}.
Next, by Lemma \ref{Re} and equation \eqref{res} we obtain that
$$[\Psi_{\phi}\circ\chi_{\phi}[R*G]]_{\BrK}=[R]_{\BrK}=\res^k_K([R*G]_{\Brk})$$
for every $[R*G]\in\EXCS$, proving commutativity of the central square.
Commutativity of the left square is clear.
\end{proof}
\noindent{\bf Acknowledgement.} The author is indebted to E. Aljadeff for suggesting Theorem \ref{surjobs}, and to A. Antony, D. Blanc and O. Schnabel for valuable discussions,
and to the referee for useful comments.

{}
\end{document}